\documentclass{amsart}
\usepackage{amsfonts,amssymb,amsmath,amsthm, latexsym}
\usepackage{url}
\usepackage{enumerate}
\usepackage{graphicx}
\usepackage{color}

\urlstyle{sf}
\newtheorem{theorem}{Theorem}[section]
\newtheorem{lemma}[theorem]{Lemma}
\newtheorem{proposition}[theorem]{Proposition}
\newtheorem{corollary}[theorem]{Corollary}
\theoremstyle{definition}
\newtheorem{definition}[theorem]{Definition}
\newtheorem{remark}[theorem]{Remark}
\numberwithin{equation}{section}

\author{S. Bezuglyi, J. Kwiatkowski and R. Yassawi}
\address{Institute for Low Temperature Physics, Kharkov, Ukraine \\
The University of Computer Science and Economics, Olsztyn, Poland, and \\
Department of Mathematics, Trent University, Peterborough, Canada
}
\email{bezuglyi@ilt.kharkov.ua\\ jkwiat@mat.uni.torun.pl \\ryassawi@trentu.ca}
\thanks{ The third author is partially supported by an NSERC Discovery Grant. }

\keywords{Bratteli diagrams, Vershik maps}
\subjclass{Primary 37B10, Secondary 37A20}

\usepackage{amsthm,amssymb,amsmath,latexsym}

\tolerance =10000
\hbadness =10000
\linespread{1.2}
\textwidth=31cc
\textheight=225truemm
\topmargin=-1cm
\oddsidemargin=1cm
\evensidemargin=1cm



\newtheorem{example}[theorem]{Example}






\newcommand{\Z}{\mathbb Z}
\newcommand{\N}{\mathbb N}
\newcommand{\om}{\omega}
\newcommand{\wt}{\widetilde}
\newcommand{\ol}{\overline}

\begin{document}
\title{Perfect orderings on finite rank Bratteli diagrams}


\begin{abstract}
Given a Bratteli diagram $B$, we study the set $\mathcal O_B$ of all
possible orderings on $B$ and its subset
$\mathcal P_B$ consisting of \textit{perfect} orderings that produce
Bratteli-Vershik topological  dynamical systems (Vershik maps). We give necessary and sufficient conditions for  the ordering $\om$ to be perfect. On the other hand, a
wide class of non-simple Bratteli diagrams that do not admit Vershik
maps is explicitly described. In the case of finite rank Bratteli
 diagrams, we show that the existence of perfect orderings with a prescribed
number of extreme paths constrains significantly the values of the  entries of
the incidence matrices  and the structure of the diagram $B$. Our proofs are based on the new notions of  {\em skeletons} and {\em associated graphs}, defined and studied in the paper. For a Bratteli diagram $B$ of rank $k$, we endow the set $\mathcal O_B$ with product measure $\mu$ and prove that there is some $1 \leq j\leq k$ such that $\mu$-almost all orderings on $B$ have $j$ maximal and $j$ minimal paths. If $j$ is strictly greater than the number of minimal
 components that $B$ has, then $\mu$-almost all orderings are imperfect.

\end{abstract}

\maketitle

\section{Introduction}\label{Introduction}

Bratteli diagrams (Definition \ref{Definition_Bratteli_Diagram}) originally appeared in the theory of
$C^*$-algebras, and have turned out to be a very powerful and productive tool
for the study of dynamical systems in the measurable, Borel, and Cantor
setting. The importance of Bratteli diagrams in dynamics is based on the
remarkable results obtained in the pioneering works by Vershik,
Herman, Giordano, Putnam, and Skau \cite{vershik:1981},
\cite{herman_putnam_skau:1992},
\cite{giordano_putnam_skau:1995}.
During  the last two decades,  diverse aspects of Bratteli diagrams, and dynamical systems defined on their path spaces, have been
extensively studied, such as  measures invariant under the tail equivalence relation, measurable and continuous eigenvalues, entropy and orbit equivalence of these systems.  We refer to a
recent survey by Durand \cite{durand:2010} where the reader will find
more references on this subject.

A  Bratteli diagram $B$ can be thought of as a partial, recursive set of instructions for building a family of symbolic dynamical systems on $X_B$, the space of infinite paths on $B$. The $n$-th level of the diagram defines a clopen partition $\xi_n$ of  $X_B$, so that the diagram gives us a sequence of refining partitions of $X_B$. The information contained in $B$ also allows us to write  $\xi_n$ as a finite collection of {\em unordered} ``towers'', indexed by the vertices of the $n$-th level  of $B$. At this point, however,  we do not know the order of the elements in these towers.
The edge set at  the $(n+1)$-st level tells us how the partition $\xi_{n+1}$ is built from the  partition $\xi_n$, using a `cutting' method. In particular, if we see $k$ edges from the $n$-th level vertex $v'$ to the vertex $v$ of $(n+1)$-st level, this tells us that there are exactly $k$ copies of the $v'$-tower placed somewhere in the $v$-tower. The set  of edges with range $v$, denoted by $r^{-1}(v)$, thus contains all information about how many copies of  towers from $\xi_n$ we use to build the $v$-tower.

We can define  a homeomorphism on $X_B$ by putting a linear  order on the edges from $r^{-1}(v)$, which describes how we {\em stack} our level $n$ towers to get the level $(n+1)$ towers. We do this for each vertex $v$ and each level $n$. The  resulting partial  {\em order} $\om$ on $B$ (Definition \ref{order_definition}) admits a  map $\varphi_\om$ on $X_B$, where each point $x$  moves up the  tower to which it belongs. But what if  $x$ lives at the top of a tower for each level? In this case   $x$ is called a {\em maximal path}, and  it is on this set of maximal paths that we may not be able
 to extend the definition of $\varphi_\om$ so that it is continuous. We call an order $\om$  {\em perfect} if it admits a homeomorphism $\varphi_\om$ (called a {\em Vershik} or {\em adic} map) on $X_B$.  In this case each maximal path is sent to a {\em minimal} path: one that lives at the bottom of a tower for each level.
 The model theorem (Thm 4.7, \cite{herman_putnam_skau:1992}) tells us that every minimal\footnote{A minimal system $(X,T)$ is one which has no non-trivial proper subsystems: there is no closed, proper  $Y\subset X$ such that $T(Y) \subset Y$.} dynamical system on a Cantor space can be represented as a Bratteli-Vershik system $(X_B, \varphi_\om)$, where $B$ is a {\em simple} Bratteli diagram (Definition \ref{rank_d_definition}).
In \cite{medynets:2006} the model theorem is extended to
aperiodic homeomorphisms of a Cantor set  where the corresponding Bratteli diagrams are {\em aperiodic} (Definition \ref{aperiodic_diagram_definition}).

   Different orderings on $B$ generate different dynamical systems. In this article, we fix a Bratteli diagram $B$ and study the set $\mathcal O_B$ of all orderings on $B$, and its subset $\mathcal P_B$ of all perfect orderings  on $B$. We investigate the following questions:  Do there exist
simple criteria that would allow us to distinguish perfect and non-perfect
orderings?  Given a diagram $B$, and a natural number $j$, can one define a perfect order on $B$ with $j$ maximal paths? Which diagrams $B$ `support' no
perfect orders: i.e. when is  $\mathcal P_B $ empty? Given a Bratteli diagram $B$,   the set $\mathcal O_B$ can be represented
as a product space and the product topology turns it into a Cantor
set. It can also be endowed with a measure: since it is natural to assume that orders on
$r^{-1}(v)$ have equal probability, we  consider the uniformly
distributed product measure $\mu$ on $\mathcal O_B$. In this context,
the following questions are   interesting to us.
Given a Bratteli
diagram $B$, what can be said about the set $\mathcal O_B$ and its
subset $\mathcal P_B$ from the topological and measurable points of view?
It is worth commenting  here that we use in
this paper the term `ordering', instead of the more usual `order',  to
stress the difference between the case of ordered Bratteli diagrams,
when an order comes with the diagram, and Bratteli
diagrams with variable orderings, which is our context.

In Section \ref{Preliminaries}, we study general topological properties of $\mathcal O_B$. How `big' is $\mathcal P_B$ for a Bratteli diagram $B$? An order on $B$ is {\em proper} if it has a unique maximal path and a unique minimal path in $X_B$.
For a simple Bratteli diagram, the set of  proper orderings  is a nonempty subset of $\mathcal P_B$.\footnote{The family of proper orderings corresponds to generate strongly
orbit equivalent Vershik maps (Theorem 2.1, \cite{giordano_putnam_skau:1995} and Proposition 5.1, \cite{glasner_weiss:1995}).}
The relation $\mathcal
O_B = \mathcal P_B$ holds only for diagrams with one vertex at infinitely
many levels (Proposition \ref{Odometer}).
With this exception,   we show that in the case of most\footnote{We assume, without loss of generality,  that all {\em incidence matrix} entries are positive (see Definition \ref{incidence_matrices_definition}).} simple diagrams, the set of perfect orderings $\mathcal P_B$ and its complement are both dense
in $\mathcal O_B$ (Proposition \ref{GoodBadOrderingsAreDense}).
The case of non-simple Bratteli diagrams is more complicated. An example of a non-simple diagram $B$ such that
$\mathcal P_B =\emptyset$ was first found by Medynets in
\cite{medynets:2006}; in the present work, we clarify the essence of
Medynets' example, and describe a wide class of non-simple Bratteli
diagrams which support no perfect ordering in
Section \ref{no_perfection}.

 Can one decide whether a given order is perfect? We are interested mainly in the case when $\om$ is not proper.
 Suppose that  $B$  has the same vertex set $V$ at each level.
  When an ordering
$\om$ is chosen  on $B$,  then we can consider the
set of all words over the  alphabet $V$,   formed by sources  of  consecutive
finite paths \footnote{Consecutive finite paths are determined by the given order $\om$ on $B$} in $B$ which have  the same range. This set of words\footnote{Rather, the subset of  this set of words that are `seen' infinitely often.} defines the {\em language} of the ordered diagram $(B,\om)$ (Definition \ref{language}). We use the language  of $(B, \om)$ to
characterize whether or not $\om$ is perfect  (Proposition \ref{ExistenceVershikMap}), in terms of a permutation $\sigma$ of a finite set. This permutation encodes the action of $\varphi_\om$ on the set of maximal paths of $\om$, in this case a finite set.   For  {\em finite rank} Bratteli diagrams the number of vertices  at each level is bounded.  If $(B,\om)$ is an ordered finite rank diagram, it can be {\em telescoped}  (Definitions \ref{telescoping_definition} and  \ref{lexicographic_definition}) to an ordered diagram $(B', \om')$ where $B'$ has the same vertex set at each level. Since $(B,\om )$ is perfectly ordered if and only if $(B', \om')$ is perfectly ordered   (Lemma   \ref{good_orders_and_telescoping}), our described characterization of perfect orders in terms of a language can be used to verify whether any order on a finite rank diagram is perfect.
 As an example of how to apply these concepts, in Section \ref{odometer}, we  find
 sufficient conditions for a Bratteli-Vershik system $(X_B, \varphi_\om)$ to
be topologically conjugate to an  {\em odometer} (Definition \ref{odometer_definition}).

Next, we wish to study further the set $\mathcal P_B$. Let $\mathcal O_B(j)$ denote the set of orders with $j$ maximal paths. Given a finite rank  diagram $B$, when is $\mathcal O_B(j)\cap \mathcal P_B\neq \emptyset$? If $B$ has {\em rank $d$} (Definition \ref{rank_d_definition}), then $j$ must be at most $d$.  This problem is only interesting when $j>1$: if $B$ is simple, or if $B$ is aperiodic and generates dynamical systems with one minimal component\footnote{We use the term `minimal component' as a synonym to `minimal subset'. A dynamical system with $k$ minimal components has $k$ proper nontrivial minimal subsystems.}, then  $\mathcal O_B(1)\subset \mathcal P_B$, and it is simple to construct these orders. On the other hand,  if $B$ generates dynamical systems with $k$ minimal components, then  $\mathcal O_B(j)\cap \mathcal P_B= \emptyset$ for $j<k$.
 We mention a result from \cite{giordano_putnam_skau:1995}, first proved in \cite{putnam:1989}, where it is shown that if $\mathcal P_B \cap \mathcal O_B(j) \neq \emptyset$, then the {\em dimension group} of $B$ contains a copy of $\Z^{j-1}$ in its {\em infinitesimal subgroup} (see \cite{giordano_putnam_skau:1995} for definitions of these terms). However the proof of this result sheds little light on the structure of $B$. Given a finite rank diagram $B$, we attempt to construct  orders in $\mathcal P_B\cap \mathcal O_B(j)$ by constraining their  languages to behave as we would expect a perfect order's language to. Thus we fix a diagram $B$ with the same vertex set at each level, and
given  an integer $j$ between 2 and the rank of $B$,
we fix a permutation $\sigma$ of $\{1, \ldots ,j\}$. We then  create a framework to build perfect orderings $\om$ such that $\varphi_\om$ acts on the set of $\om$-maximal paths according to the instructions given by $\sigma$. We build such orderings by first specifying the set of all maximal edges in a certain way. This is the idea behind the notion of  a {\em skeleton}  $\mathcal F$ (Definition \ref{skeleton}), which partially defines an order. Given a skeleton and permutation, we define  a (directed) associated graph $\mathcal H$ (Definition \ref{associated_graph}). The graph $\mathcal H$, whose paths will correspond to words in the language of the putative perfect order, is used to take the partial instructions that we have been given by $\mathcal F$, and extend them to a perfect order on $B$.  Whether a perfect order exists on $B$ with a specified skeleton, depends on whether the {\em incidence matrices} of $B$ (Definition \ref{incidence_matrices_definition})
are related according to Theorem  \ref{existence of good order}. The
 simplest  case is if $B$ a simple, rank $d$ diagram and
  $\mathcal O_B(d) \cap \mathcal P_B\neq \emptyset$. Then   $B$'s incidence matrices $(F_n)$ are almost completely determined, as is the dynamical behaviour of the corresponding $\varphi_\om$ (Theorem \ref{d
 max/min paths}). A consequence of  Theorem  \ref{existence of good order}
 and  Remark
 \ref{non-simple case}, along with the fact that
 aperiodic Cantor homeomorphisms can be represented as adic systems, is that non-minimal
 aperiodic dynamical systems do not exist in abundance.
We remark that these notions can be generalized to non-finite  rank diagrams; however  the corresponding definitions are more technical, especially notationally.

In Section \ref{genericity_results}, we endow the set $\mathcal O_B$ with the uniform product
measure, and study questions about the measure of specific subsets of
$\mathcal O_B$. The results of this section are independent of those in Sections
\ref{language_skeleton} and \ref{characterization}.
We show, in Theorem \ref{goodbaddichotomy} and Corollary
\ref{j=j'},
that for a finite rank $d$ diagram there is
some $1 \leq j \leq d$ such that almost all orderings have exactly $j$
maximal and $j$ minimal paths.
 Whether for diagrams with
isomorphic dimension groups the $j$ is the same is an open question. In particular, in this section we cannot freely telescope our diagram: if $B'$ is a telescoping of $B$, then $\mathcal O_B$ is a set of 0 measure in $\mathcal O_B'$. We give necessary and sufficient conditions, in
terms of the incidence matrices of $B$, for verifying the value of
$j$, and show that $j=1$ for a large class of diagrams which include
linearly recurrent diagrams. We show in Theorem \ref{generic_two} that if $B$ is simple and $j>1$,
then a random ordering is not perfect.

We end with some questions. If $B'$ is a telescoping of $B$, how do $\mathcal P_B$ and $\mathcal P_B'$ compare?
 Do  Bratteli diagrams that support non-proper, perfect orders have special spectral properties? Do their dimension groups have any additional structure? Can one identify any interesting topological factors? Do these results generalize in some way to non-finite rank diagrams? If $B$ has finite rank and almost all orders on $B$ have $j$ maximal paths, is $j$ invariant under telescoping?

  \section{Bratteli diagrams and Vershik maps}\label{Preliminaries}

\subsection{Main definitions on Bratteli diagrams}

In this section, we collect the notation and basic definitions that
are used throughout the paper.
More information about Bratteli diagrams can be found in the papers
\cite{herman_putnam_skau:1992}, \cite{giordano_putnam_skau:1995},
\cite{durand_host_scau:1999}, \cite{medynets:2006},
\cite{bezuglyi_kwiatkowski_medynets:2009},
\cite{bezuglyi_kwiatkowski_medynets_solomyak:2010},
\cite{durand:2010} and references therein.

\begin{definition}\label{Definition_Bratteli_Diagram}
A {\it Bratteli diagram} is an infinite graph $B=(V^*,E)$ such that the vertex
set $V^*=\bigcup_{i\geq 0}V_i$ and the edge set $E=\bigcup_{i\geq 1}E_i$
are partitioned into disjoint subsets $V_i$ and $E_i$ where

(i) $V_0=\{v_0\}$ is a single point;

(ii) $V_i$ and $E_i$ are finite sets;

(iii) there exists a range map $r$ and a source map $s$, both from $E$ to
$V^*$, such that $r(E_i)= V_i$, $s(E_i)= V_{i-1}$, and
$s^{-1}(v)\neq\emptyset$, $r^{-1}(v')\neq\emptyset$ for all $v\in V^*$
and $v'\in V^*\setminus V_0$.
\end{definition}

The pair $(V_i,E_i)$ or just $V_i$ is called the $i$-th level of the
diagram $B$. A finite or infinite sequence of edges $(e_i : e_i\in E_i)$
such that $r(e_{i})=s(e_{i+1})$ is called a {\it finite} or {\it infinite
path}, respectively. For $m<n$, $v\, \in V_{m}$ and $w\,\in V_{n}$, let $E(v,w)$ denote the set of all paths $\overline{e} = (e_{1},\ldots, e_{p})$
with $s(e_{1})=v$ and $r(e_{p})=w$. If $m>n$ let $E(n,m)$ denote all paths whose source belongs to $V_n$ and whose range belongs to $V_m$.
For a Bratteli diagram $B$,
let $X_B$ be the set of infinite paths starting at the top vertex $v_0$.
 We
endow $X_B$ with the topology generated by cylinder sets
$\{U(e_j,\ldots,e_n): j, \,\, n \in \mathbb N, \mbox{ and }
 (e_j,\ldots,e_n) \in E(v, w),  v \in V_{j-1}, w \in V_n
\}$, where
$U(e_j,\ldots,e_n):=\{x\in X_B : x_i=e_i,\;i=j,\ldots,n, \,\,
\}$.
 With this topology, $X_B$ is a 0-dimensional compact metric space.
We will consider such diagrams $B$ for which the path space $X_B$ has
no isolated points.  Letting $|A|$ denote the cardinality of the set $A$, this means that for every $(x_1,x_2,\ldots)\in X_B$
and every $n\geq 1$ there exists $m>n$ such that $|s^{-1}(r(x_m))|>1$.

\begin{definition}\label{incidence_matrices_definition}
Given a Bratteli diagram $B$, the $n$-th {\em incidence matrix}
$F_{n}=(f^{(n)}_{v,w}),\ n\geq 0,$ is a $|V_{n+1}|\times |V_n|$
matrix whose entries $f^{(n)}_{v,w}$ are equal to the number of
edges between the vertices $v\in V_{n+1}$ and $w\in V_{n}$, i.e.
$$
 f^{(n)}_{v,w} = |\{e\in E_{n+1} : r(e) = v, s(e) = w\}|.
$$
\end{definition}

 Observe that every vertex $v\in V^* $ is
connected to $v_0$ by a finite path and the set $E(v_0,v)$ of all such
paths is finite. Set $h_v^{(n)}=|E(v_0,v)|$ for $v\in V_{n}$. Then
\begin{equation*}
h_v^{(n+1)}=\sum_{w\in V_{n}}f_{v,w}^{(n)}h^{(n)}_w \ \mbox{or} \ \
h^{(n+1)}=F_{n}h^{(n)}
\end{equation*}
where $h^{(n)}=(h_w^{(n)})_{w\in V_n}$.

Next we define some popular families of Bratteli diagrams that we work with in this article.

\begin{definition}\label{rank_d_definition} Let $B$ be a Bratteli diagram.
\begin{enumerate}
\item
We say $B$ has \textit{finite rank} if  for some $k$, $|V_n| \leq k$ for all $n\geq 1$.
\item
Let $B$ have finite rank. We say $B$ has \textit{rank $d$} if  $d$ is the smallest
integer such that $|V_n|=d$ infinitely often.
\item
We say that $B$ is {\em simple} if  for any level
$n$ there is $m>n$ such that $E(v,w) \neq \emptyset$ for all $v\in
V_n$ and $w\in V_m$.
\item
 We say $B$ is  \textit{stationary} if $F_n = F_1$  for all $n\geq 2$.
\end{enumerate}
\end{definition}

\begin{definition}
For a Bratteli diagram $B$, the {\it tail (cofinal) equivalence}  relation
$\mathcal E$ on the path space $X_B$ is defined as $x\mathcal Ey$ if
 $x_n=y_n$ for all $n$ sufficiently large, where $x = (x_n)$, $y= (y_n)$.
\end{definition}

Let $X_{per}=\{x\in X_B : |[x]_{\mathcal E}| <\infty\}$. By definition,
we have $X_{per}=\{x\in X_B : \exists n>0 \mbox{ such that }(|r^{-1}(r(x_i))|=
1\ \forall i\geq n)\}$.

\begin{definition}\label{aperiodic_diagram_definition}
A Bratteli diagram $B$ is called
\textit{aperiodic} if $X_{per}=\emptyset$, i.e., every
$\mathcal  E$-orbit is countably infinite.
\end{definition}

We shall  constantly use the \textit{telescoping}
procedure for a Bratteli diagram:

\begin{definition} \label{telescoping_definition}
Let $B$ be a Bratteli diagram, and $n_0 = 0 <n_1<n_2 < \ldots$ be a strictly increasing sequence of integers. The {\em telescoping of $B$ to $(n_k)$} is the Bratteli diagram $B'$, whose $k$-level vertex set $V_k'= V_{n_k}$ and whose incidence matrices $(F_k')$ are defined by
\[F_k'= F_{n_{k+1}-1} \circ \ldots \circ F_{n_k},\]
where $(F_n)$ are the incidence matrices for $B$.
\end{definition}

Roughly speaking, in order to
telescope a Bratteli diagram, one takes a subsequence of levels
$\{n_k\}$ and considers the set $E(n_k, n_{k+1})$
 of all finite paths between the
 levels $\{n_k\}$ and $\{n_{k+1}\}$ as edges of the new diagram.
In particular, a Bratteli diagram $B$ has rank $d$ if and only if there is
a telescoping $B'$ of $B$ such that $B'$ has exactly $d$ vertices
at each level. When telescoping diagrams, we often do not specify to which levels $(n_k)$ we telescope, because it suffices to know that such a sequence of levels exists.

\begin{lemma}\label{aperiodic-property}
Every aperiodic Bratteli diagram $B$ can be telescoped to a diagram $B'$
with the property: $|r^{-1}(v)| \geq 2,\ v \in V^*\setminus V_0 $ and
$|s^{-1}(v)| \geq 2, \ v\in V^* \setminus V_0$.
\end{lemma}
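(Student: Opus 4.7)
I will construct the telescoping inductively by selecting levels $0=n_0<n_1<n_2<\cdots$ so that, for every $k\ge 0$, the block of paths between $V_{n_k}$ and $V_{n_{k+1}}$ simultaneously realizes (a) every $v\in V_{n_k}$ has at least two paths forward to $V_{n_{k+1}}$, and (b) every $v\in V_{n_{k+1}}$ is reached by at least two paths from $V_{n_k}$. Since the edges of $B'$ between consecutive levels are exactly such paths, (a) yields $|s^{-1}(v)|\ge 2$ and (b) yields $|r^{-1}(v)|\ge 2$ in the telescoped diagram. Both conditions are monotone in $n_{k+1}$ (once they hold at some level they persist at all deeper levels, since each such path can be prolonged), so I will separately locate a level satisfying (a) and a level satisfying (b) and take $n_{k+1}$ to be the larger of the two.

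For (a), I argue from the standing no-isolated-points assumption on $X_B$. Fix $v\in V_{n_k}$ and suppose, towards a contradiction, that only one path from $v$ reaches $V_m$ for every $m>n_k$. Then $v$ determines a unique infinite forward ray, along which every vertex $w$ must satisfy $|s^{-1}(w)|=1$; prepending any finite path $v_0\to v$ yields $x\in X_B$ with $|s^{-1}(r(x_m))|=1$ for all $m>|v|$, contradicting the no-isolated-points hypothesis. Hence each $v\in V_{n_k}$ admits some level $m_v$ where at least two forward paths emanate from it; taking the maximum over the finite set $V_{n_k}$ produces a common level at which (a) holds.

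For (b), I invoke aperiodicity through a K\"onig's lemma argument. Set
\[
T_M:=\{v\in V_M:\text{exactly one path in } B \text{ joins } V_{n_k} \text{ to } v\},\qquad M>n_k.
\]
If $T_M=\emptyset$ for some $M$ then (b) holds at that level. Otherwise, assume $T_M\ne\emptyset$ for all $M>n_k$. A vertex $v\in T_M$ must satisfy $|r^{-1}(v)|=1$: any additional incoming edge, combined with the $\ge 1$ paths from $V_{n_k}$ to its source, would yield a second path from $V_{n_k}$ to $v$. The unique parent of $v$ therefore lies in $T_{M-1}$, so $\bigsqcup_{M>n_k} T_M$ forms an infinite, finitely branching, rooted forest in which each vertex has a well-defined unique parent. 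K\"onig's lemma then supplies an infinite descending branch $v_{n_k+1},v_{n_k+2},\ldots$ with $v_M\in T_M$ and $v_{M+1}$'s unique parent equal to $v_M$; prepending any path $v_0\to v_{n_k}\to v_{n_k+1}$ yields $x\in X_B$ satisfying $|r^{-1}(r(x_M))|=1$ for all $M>n_k$, so $x\in X_{per}$, contradicting aperiodicity.

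The principal technical point is the tree structure used in the K\"onig step: one must verify that membership in $T_M$ forces $|r^{-1}(v)|=1$, which secures the well-defined parent map $T_M\to T_{M-1}$ and places the entire argument on the tree $\bigsqcup_M T_M$. Once both (a) and (b) have been arranged at $n_{k+1}$, the induction proceeds to the next stage; the remaining ingredients—monotonicity, finiteness of the $V_n$, and the dictionary between (a)/(b) and the degree conditions in $B'$—are routine.
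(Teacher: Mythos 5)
Your proof is correct and follows essentially the same route as the paper's: assuming the relevant condition fails beyond some level, you extract an infinite path lying in $X_{per}$ (contradicting aperiodicity) for the $|r^{-1}|\geq 2$ half, and a path with $|s^{-1}(r(x_m))|=1$ for all large $m$ (contradicting the standing no-isolated-points assumption) for the $|s^{-1}|\geq 2$ half. Your K\"onig's-lemma step on the sets $T_M$ is just the combinatorial counterpart of the paper's compactness argument with the nested nonempty clopen sets $U_m$, and rests on the same key observation that a vertex reached by a unique path from $V_{n_k}$ has a unique incoming edge all along that path.
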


In other words, we can state that, for any aperiodic Bratteli diagram,
 the properties $|r^{-1}(v)| \geq 2,\ v \in V^*\setminus V_0$, and
$|s^{-1}(v)| \geq 2, \ v\in V^* \setminus  V_0$, hold for infinitely
many levels $n$.

\begin{proof} We shall show that any periodic diagram $B$ can be telescoped so that
$|r^{-1}(v)| \geq 2,\ v \in V^*\setminus V_0$; the proof of the other statement is similar.
We need to show that for every $n\in \N$ there exists
$m > n$ such that for each vertex $v\in V_m$ there are at least two
finite paths $e,f \in E(n,m)$ with
$r(e) =r(f) =v$.
Assume that the converse is true. Then there exists $n$ such that for
all $m>n$ the set $U_m = \{x = (x_i)\in X_B : |r^{-1}(r(x_i))|=1,\
i= n+1,...,m\}$ is not empty. Clearly, $U_m$ is a clopen subset of $X_B$
and $U_m\supset U_{m+1}$. It follows that $X_{per} \supset U =
\bigcap_{m>n} U_m \neq \emptyset$.  This contradicts
the aperiodicity of the diagram.
\end{proof}

We will assume that the following \textit{convention} always holds:
{\em our diagrams are not disjoint unions of two subdiagrams.}
Here $B= (V^*,E)$ is a {\it disjoint union  of $B^1 =(V^{*,1}, E^1)$ and $B^2 =
(V^{*,2}, E^2)$} if  $V^*=V^{*,1}\cup V^{*,2}$,
  $V^{*,1}\cap V^{*,2} = \{v_{0}\}$ and  $E=E^{1}\sqcup E^{2}$.

\textit{Throughout the paper, we only consider aperiodic Bratteli diagrams $B$.  For
these diagrams $X_B$ is a Cantor set and $\mathcal E$ is a Borel
equivalence relation on $X_B$ with countably infinitely many equivalence classes.}

\begin{remark}
Given an aperiodic dynamical system $(X,T)$, a Bratteli diagram
is constructed by a sequence of Kakutani-Rokhlin partitions generated
by $(X,T)$ (see \cite{herman_putnam_skau:1992} and \cite{medynets:2006}).
The $n$-th level of the diagram corresponds to the $n$-th
Kakutani-Rokhlin partition  and the number $h_w^{(n)}$ is the height of
the $T$-tower labeled by the symbol $w$ from that partition.
\end{remark}

%
%

\subsection{Orderings on a Bratteli diagram}

Let $B$ be a Bratteli diagram whose path space $X_B$
is a Cantor set.

\begin{definition}\label{order_definition} A Bratteli diagram $B=(V^*,E) $ is called {\it ordered}
if a linear order `$>$' is defined on every set  $r^{-1}(v)$, $v\in
\bigcup_{n\ge 1} V_n$. We use  $\om$ to denote the corresponding partial
order on $E$ and write $(B,\om)$ when we consider $B$ with the ordering $\om$. Denote by $\mathcal O_{B}$ the set of all orderings on $B$.
\end{definition}

Every $\omega \in \mathcal O_{B}$ defines the  \textit{lexicographic}
ordering on the set $E(k,l)$
 of finite paths between
vertices of levels $V_k$ and $V_l$:  $(e_{k+1},...,e_l) > (f_{k+1},...,f_l)$
if and only if there is $i$ with $k+1\le i\le l$, $e_j=f_j$ for $i<j\le l$
and $e_i> f_i$.
It follows that, given $\om \in \mathcal O_{B}$, any two paths from $E(v_0, v)$
are comparable with respect to the lexicographic ordering generated by $\om$.  If two infinite paths are tail equivalent, and agree from the vertex $v$ onwards, then we can compare them by comparing their initial segments in $E(v_0,v)$. Thus $\om$ defines a partial order on $X_B$, where two infinite paths are comparable if and only if they are tail equivalent.

   \begin{definition}
We call a finite or infinite path $e=(e_i)$ \textit{ maximal (minimal)} if every
$e_i$ is maximal (minimal) amongst the edges
from $r^{-1}(r(e_i))$.
\end{definition}

Notice that, for $v\in V_i,\ i\ge 1$, the
minimal and maximal (finite) paths in $E(v_0,v)$ are unique. Denote
by $X_{\max}(\om)$ and $X_{\min}(\om)$ the sets of all maximal and
minimal infinite paths in $X_B$, respectively. It is not hard to show that
$X_{\max}(\om)$ and $X_{\min}(\om)$ are \textit{non-empty closed subsets} of
$X_B$; in general, $X_{\max}(\om)$ and $X_{\min}(\om)$ may have
interior points. For a finite rank Bratteli diagram $B$, the sets $X_{\max}(\om)$
and $X_{\min}(\om)$ are always finite for any $\om$, and if $B$ has rank $d$,
then each of them have at most $d$ elements
(Proposition 6.2 in \cite{bezuglyi_kwiatkowski_medynets:2009}).

\begin{definition}
An  ordered Bratteli diagram $(B, \om)$ is called \textit{properly ordered}
if the sets $X_{\max}(\om)$ and $X_{\min}(\om)$ are singletons.
\end{definition}
We denote by $\mathcal O_B(j)$ the set of all orders on $B$ which have $j$ maximal paths.
Thus $\mathcal O_B(1)$ is the set of proper orders, and  if $B$ has rank $d$, then $\mathcal O_B = \bigcup_{j=1}^{d}\mathcal O_B(j)$.

\begin{definition}\label{lexicographic_definition}Let $(B,\omega)$ be an ordered Bratteli diagram, and suppose that $B'=(V',E')$ is the telescoping of $B$ to levels $(n_k)$. Let $v' \in V'$ and suppose that the two edges $e_1',$ $e_2'$, both with range $v'$, correspond to the finite paths $e_1$, $e_2$ in $B$, both with range $v$. Define the order $\om'$ on $B'$ by $e_1'<e_2'$ if and only if $e_1<e_2$. Then $\om' $ is called the {\em lexicographic order generated by $\om$} and is denoted by $\om'=L(\om)$.
\end{definition}

 It is not hard to see that if $\om' = L(\om)$, then
$$
|X_{\max}(\om)| = |X_{\max}(\om')|,\ \ |X_{\min}(\om)| = |X_{\min}(\om')|.
$$

Let $(B,\om)$ be an ordered Bratteli diagram. Then $x\in
X_{\max}(\om)\cap X_{\min}(\om)$ if and only if $|\mathcal E(x)|=1$.
Thus, if $B$ is an aperiodic Bratteli diagram, then $X_{\max}(\om)\cap
X_{\min}(\om) =\emptyset$.

\begin{definition}
Let $B$ be a stationary diagram. We say an ordering $\om \in \mathcal O_{B}$
is \textit{stationary} if the partial linear order defined by $\om$ on the set $E_{n}$ of all edges between levels $V_{n-1}$ and $V_{n}$,  does not depend on $n$ for $n>1$.
\end{definition}

It is well known that for every stationary ordered Bratteli diagram $(B, \om)$
one can define a `substitution $\tau$ read on $B$' by the following rule.
For each vertex $i \in V = \{1, 2,...,d\}$, we write $r^{-1}(i) = \{e_1,...,e_t\}$ where
$e_1 < e_2 < ... < e_t$ with respect to $\om$. Then we set
$\tau(i) = j_1j_2\cdots  j_t$ where $j_k = s(e_k),\ k =1,...,t$; this
defines the substitution read on $B$.
Conversely, such a substitution $\tau$ describes completely the stationary ordered Bratteli diagram $(B,\om)$ whose vertex set $V_n$ coincides with the alphabet of $\tau$ for all $n\geq 1$.

Now we give a useful description of infinite paths in an ordered
Bratteli diagram $(B,\om)$ (see also
\cite{bezuglyi_dooley_kwiatkowski:2006}).  Take $v\in V_n$ and
consider the finite set $E(v_0,v)$, whose cardinality is $h_v^{(n)}$. The
lexicographic ordering on $E(v_0,v)$ gives us an enumeration of its
elements from 0 to $h_v^{(n)} -1$, where 0 is assigned to the minimal path
and $h_v^{(n)} -1$ is assigned to the maximal path in $E(v_0,v)$. Note that  $h_v^{(1)} = f_{v v_0}^{(0)}$ for $ v\in V_1$, and we have by induction for $n>1$
\begin{equation*}\label{order}
h_v^{(n)} = \sum_{w\in s(r^{-1}(v))}|E(w,v)|h_w^{(n-1)},\ \ \ v\in V_n.
\end{equation*}

Let $y=(e_1,e_2,...)$ be an infinite path from $X_B$. Consider a sequence
$(P_n)$ of  enlarging finite paths defined by $y$ where $P_n = (e_1,...,e_n)
\in E(v_0, r(e_n))$,  $n\in \mathbb{N}$. Then every $P_n$ can be identified
with a pair $(i_n,v_n)$ where $v_n = r(e_n)$ and $i_n \in [0, h^{(n)}_{v_n}-1]$ is
the number assigned to $P_n$ in $E(v_0,v_n)$. Thus, every $y = (e_n)\in
X_B$ is uniquely represented as the infinite sequence $(i_n,v_n)$ with
$v_n =r(e_n)$ and $0\leq i_n \leq h^{(n)}_{v_n} -1$. We refer to the sequence
$(i_n,v_n)$ as the {\it associated sequence.}

\begin{proposition} Two infinite paths $e =(e_1,e_2,...)$ and $e' =
(e'_1,e'_2,...)$ from the path space $X_B$ are cofinal with respect to $\mathcal E$
if and only if the sequences $(i_n,v_n)$ and
$(i'_n,v'_n)$ associated to $e$ and $e'$ satisfy the condition: there exists
$m\in \N$ such that $v_n = v'_n$ and $i_n - i'_n = i_m - i'_m$ for all
$n\geq m$.
\end{proposition}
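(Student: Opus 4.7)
The plan is to reduce both directions of the equivalence to a single recursive formula expressing $i_n$ in terms of $i_{n-1}$ and the lexicographic contribution of the last edge $e_n$.

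First I would establish the key recursion. Fix an infinite path $e=(e_1,e_2,\ldots) \in X_B$ with associated sequence $(i_n,v_n)$. Partition the paths in $E(v_0,v_n)$ lexicographically smaller than $P_n=(e_1,\ldots,e_n)$ into two classes: those whose last edge $\hat e$ satisfies $\hat e <_{\omega} e_n$, and those whose last edge equals $e_n$ but whose initial segment in $E(v_0, s(e_n))$ is smaller than $(e_1,\ldots,e_{n-1})$. Counting each class yields
\begin{equation*}
i_n \;=\; \sum_{\hat e\in r^{-1}(v_n),\ \hat e <_{\omega} e_n} h^{(n-1)}_{s(\hat e)} \;+\; i_{n-1}.
\end{equation*}
The same formula holds with primes, and for $n\geq m$ with $v_n=v'_n$ the two sums are over the same set $r^{-1}(v_n)$.

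For the forward direction, assume $e\,\mathcal E\,e'$, so there exists $m$ with $e_n=e'_n$ for all $n>m$ (in particular $v_n=v'_n$ for $n\geq m$). For such $n$, the edge $e_n=e'_n$ makes the sums in the recursion coincide, giving $i_n-i'_n = i_{n-1}-i'_{n-1}$; induction from level $m$ yields $i_n-i'_n=i_m-i'_m$ for every $n\geq m$.

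For the converse, assume $v_n=v'_n$ and $i_n-i'_n = i_m-i'_m$ for all $n\geq m$. Subtracting the two recursions at a level $n>m$ gives
\begin{equation*}
\sum_{\hat e <_{\omega} e_n} h^{(n-1)}_{s(\hat e)} \;=\; \sum_{\hat e <_{\omega} e'_n} h^{(n-1)}_{s(\hat e)}.
\end{equation*}
Since all heights $h^{(n-1)}_{s(\hat e)}$ are strictly positive, if we had $e_n\neq e'_n$, say $e_n >_{\omega} e'_n$, the left side would exceed the right by at least $h^{(n-1)}_{s(e'_n)}\geq 1$, a contradiction. Hence $e_n=e'_n$ for every $n>m$, so $e$ and $e'$ are cofinal.

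The only subtle point is the bookkeeping in the recursion — making sure the decomposition of $\{Q\in E(v_0,v_n):Q<P_n\}$ is correct and that $i_{n-1}$ indeed counts the initial segments of $P_n$ lying below $(e_1,\ldots,e_{n-1})$ in $E(v_0,s(e_n))$. Once that recursion is in hand, both directions are essentially one-line consequences, with the backward direction relying only on the positivity of the heights (guaranteed by aperiodicity, but in fact not even needed here since $h^{(n-1)}_w\geq 1$ always).
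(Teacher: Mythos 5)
Your proof is correct and follows essentially the same route as the paper: both directions rest on how the lexicographic index at level $n$ relates to the index at level $n-1$, which the paper invokes directly from the definition of the lexicographic order and you encode in the explicit recursion $i_n=\sum_{\hat e<_\omega e_n}h^{(n-1)}_{s(\hat e)}+i_{n-1}$. Your converse, using positivity of the heights to force $e_n=e'_n$, is just a carefully spelled-out version of the paper's terse appeal to the same definition.
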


\begin{proof}  Suppose $e$ and $e'$ are cofinal. Take $m$ such that
$e_n=e_n'$ for all $n\geq m$. Consider the associated sequences
$(i_n,v_n)$ and $(i_n',v_n')$. Then we see that $v_n=v_n'$
for all $n\geq m$. Without loss of generality, we can assume that $c_m
=i_m-i_m'\geq 0$. This means that the finite path $P_m=P(e_1,...,e_m)$
is the $c_m$-th successor of the finite path $P'_m = P(e'_1,...,e'_m)$. Let
$c_{m+1}=i_{m+1}-i_{m+1}'$. By definition of the lexicographic ordering on
$E(v_0,v_{m+1})$, we obtain that $c_{m+1}=c_m$. Thus, by induction,
$c_{n}=c_m$ for all $n\geq m$.

Conversely, suppose that two associated sequences $(i_n,v_n)$ and
$(i_n',v_n')$ possess the property: there exists $m\in \N$ such that
$v_n=v_n'$ and $i_n-i_n'=i_m-i_m'$ for all $n\geq m$. To see that $e$
and $e'$ are cofinal, notice that $e_{m+1}$ and $e_{m+1}'$ are in
$E(v_m, v_{m+1})$. By definition of the lexicographic ordering on
$E(v_0,v_{m+1})$, we conclude that $e_{m+1}=e_{m+1}'$.
\end{proof}

\begin{proposition}
A Bratteli diagram $B$ admits an ordering $\om \in \mathcal O_{B}$ on
$B$ with $Int(X_{\max}(\om))\neq\emptyset$ if and only if there exist
$x =(x_i)\in X_B$ and $n>0$ such that  $U(x_1,\ldots,x_n)= \{y\in X_B :
y_i= x_i,\ i= 1,\ldots,n\}$  has no cofinal paths, i.e. $U(x_1,\ldots,x_n)$
meets each $\mathcal E$-orbit at most once. A similar result
holds for $Int(X_{\min}(\om))$.
\end{proposition}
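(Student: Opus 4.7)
The plan is to prove the two directions separately; the forward direction is a short rigidity argument, while the backward direction requires a small construction whose correctness hinges on a splicing observation.

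For the forward direction, I would assume $\mathrm{Int}(X_{\max}(\om))\ne\emptyset$ and pick a cylinder $U(x_1,\ldots,x_n)\subset X_{\max}(\om)$. Any two cofinal paths $y,y'\in U$ satisfy $y_k=y'_k$ for all $k\ge m$ for some $m$; I then work backward from level $m$. At level $m-1$ the edges $y_{m-1}$ and $y'_{m-1}$ lie in $r^{-1}(v)$ for the common vertex $v=s(y_m)=s(y'_m)$, and both are $\om$-maximal (since $y,y'\in X_{\max}(\om)$). Because the maximal edge in $r^{-1}(v)$ is unique, $y_{m-1}=y'_{m-1}$; iterating down to level $1$ gives $y=y'$. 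Hence $U$ meets each $\mathcal E$-class at most once.

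For the backward direction, suppose $U=U(x_1,\ldots,x_n)$ meets each $\mathcal E$-orbit at most once. The key observation is a \emph{splicing lemma}: for every $w\in V_m$ with $m>n$, at most one edge of $r^{-1}(w)$ is traversed by some path in $U$. Otherwise, if distinct $e,f\in r^{-1}(w)$ extend to $y,y''\in U$ with $y_m=e$ and $y''_m=f$, I would form the path
\[
y'=(x_1,\ldots,x_n,\;y''_{n+1},\ldots,y''_{m-1},\;f,\;y_{m+1},y_{m+2},\ldots),
\]
which is a legal infinite path since $r(f)=w=r(y_m)=s(y_{m+1})$; it lies in $U$, is cofinal with $y$, and differs from $y$ at level $m$, contradicting the hypothesis.

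Armed with this, I define $\om$ as follows: declare $x_i$ to be the $\om$-maximal edge in $r^{-1}(r(x_i))$ for $i=1,\ldots,n$ (these ranges are at distinct levels, so there is no conflict), and for every vertex $w$ at some level $>n$ that is reached by $U$, declare the unique edge of $r^{-1}(w)$ used by $U$ to be maximal; the linear orders on the remaining sets $r^{-1}(v)$ are chosen arbitrarily. Then every $y\in U$ has every $y_i$ maximal in $r^{-1}(r(y_i))$, so $U\subset X_{\max}(\om)$, and since $U$ is clopen, $U\subset \mathrm{Int}(X_{\max}(\om))$. The corresponding statement for $X_{\min}(\om)$ follows by reversing the roles of maximal and minimal throughout. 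The main obstacle is the splicing argument; once it is established, both the rigidity of maximal paths in a cylinder and the freedom to construct $\om$ are immediate.
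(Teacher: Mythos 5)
Your proof is correct and takes essentially the same route as the paper: the forward direction uses uniqueness of the maximal edge in each $r^{-1}(v)$ to rule out distinct cofinal paths in a cylinder contained in $X_{\max}(\om)$, and the backward direction builds $\om$ by declaring the edges used by $U$ to be maximal. Your splicing lemma simply makes explicit the uniqueness fact (at most one edge of $r^{-1}(w)$ is traversed by paths in $U$ beyond level $n$) that the paper's construction uses implicitly when it asserts $U\subset X_{\max}(\om)$.
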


\begin{proof} Let $x$ be an interior point of $X_{\max(\om)}$. Then there
is an  $n>0$ such that $U(x_1,\ldots,x_n) \subset X_{\max}(\om)$; thus,
$U(x_1,\ldots,x_n)$ contains no distinct cofinal paths.

Now, suppose that there exist $x =(x_i)\in X_B$ and $n>0$ such that
$U=U(x_1,\ldots,x_n)$ meets each $\mathcal E$-orbit at most once. Define
a linear order $\om_v$ on $r^{-1}(v),\ v\in V^*\setminus V_0$, as follows.
If there exists an $e\in r^{-1}(v)$ which is an edge in an infinite path $y\in U$,
then we order $r^{-1}(v)$ such that $e$ is maximal in $r^{-1}(v)$. If such an
$e$ does not exist, we order $r^{-1}(v)$ in an arbitrary way. It follows that
for this ordering $U\subset X_{\max}(\om)$.
\end{proof}

\begin{definition}A Bratteli diagram $B $ is called \textit{ regular} if for any ordering
$\omega \in \mathcal O_{B}$ the sets $X_{\max}(\omega)$ and
$X_{\min}(\omega)$ have empty interior.
\end{definition}
In particular, finite rank Bratteli
 diagrams are regular.

Given a Bratteli diagram $B$, we can  describe the set
of all orderings $\mathcal O_{B}$ in the following way.
Given a  vertex $v\in V^*\backslash V_0$, let   $P_v$ denote the set of all orders on $r^{-1}(v)$; an element in $P_v$
is denoted by $\om_v$.
Then $\mathcal O_{B}$ can be
represented as
\begin{equation}\label{orderings_set}
\mathcal O_{B} = \prod_{v\in V^*\backslash V_0}P_v .
\end{equation}
 Giving each set $P_v$ the discrete topology, it follows from (\ref{orderings_set}) that  $\mathcal O_{B}$ is a Cantor
set with respect to the product topology. In other words, two orderings
$\omega= (\om_v)$ and $\omega' = (\om'_v)$ from $\mathcal O_{B}$ are close
if and only if they agree on a sufficiently long initial segment: $\om_v = \om'_v,
v \in \bigcup_{i=0}^k V_i$.

It is worth  noticing that the order space $\mathcal O_B$ is sensitive with
respect to a telescoping. Indeed, let $B$ be a Bratteli diagram and $B'$
denote the diagram obtained
by telescoping of $B$ with respect to a subsequence $(n_k)$ of levels.
We see that any ordering $\omega$ on $B$ can be extended to the
(lexicographic) ordering $\om'$ on $B'$. Hence the map $L : \om \to \om'=L(\om)$
defines a closed proper subset $L(\mathcal O_B)$ of $\mathcal O_{B'}$.

The set of all orderings $\mathcal O_{B} $ on a Bratteli diagram $B $
can be considered also as a \textit{measure space} whose Borel structure is
generated by cylinder sets.  On the set
$\mathcal O_B $
we take the product measure $\mu = \prod_{v\in V^*\backslash V_0} \mu_v$ where  $\mu_v$
is a measure on the set $P_v$.   The case where each $\mu_v$
is the uniformly distributed measure on $P_v$ is of particular interest:
$\mu_v(\{i\}) =  (|r^{-1}(v)|!)^{-1}$ for every $i \in P_v$ and $v\in V^* \backslash V_0$.
Unless $|V_n|=1$ for almost all $n$,  if $B'$ is a telescoping of $B$, then in $\mathcal O_{B'}$, $L(\mathcal O_{B})$ is a set of zero measure.

\subsection{Vershik maps}

\begin{definition}\label{VershikMap}
Let $(B, \omega)$ be an  ordered Bratteli
diagram. We say that $\varphi = \varphi_\omega : X_B\rightarrow X_B$
is a {\it (continuous) Vershik map} if it satisfies the following conditions:

(i) $\varphi$ is a homeomorphism of the Cantor set $X_B$;

(ii) $\varphi(X_{\max}(\omega))=X_{\min}(\omega)$;

(iii) if an infinite path $x=(x_1,x_2,\ldots)$ is not in $X_{\max}(\omega)$,
then $\varphi(x_1,x_2,\ldots)=(x_1^0,\ldots,x_{k-1}^0,\overline
{x_k},x_{k+1},x_{k+2},\ldots)$, where $k=\min\{n\geq 1 : x_n\mbox{
is not maximal}\}$, $\overline{x_k}$ is the successor of $x_k$ in
$r^{-1}(r(x_k))$, and $(x_1^0,\ldots,x_{k-1}^0)$ is the minimal path
in $E(v_0,s(\overline{x_k}))$.
\end{definition}

If $\om$ is an ordering on $B$, then one can always define the map
$\varphi_0$ that maps $X_B \setminus X_{\max}(\om)$ onto $X_B
\setminus X_{\min}(\om)$ according to (iii) of Definition
\ref{VershikMap}. The question about the existence of the Vershik map is
equivalent to that of an extension of  $\varphi_0 : X_B \setminus
X_{\max}(\om) \to X_B \setminus X_{\min}(\om)$ to a homeomorphism
of the entire set $X_B$.  If $\om$ is a proper ordering, then
$\varphi_\om$ is a homeomorphism. For a finite rank Bratteli diagram $B$, the
situation is simpler than for a general Bratteli diagram because the sets
$X_{\max}(\om)$ and $X_{\min}(\om)$ are finite.
\medskip

\begin{definition}\label{Good_and_bad}  Let $B$ be a Bratteli diagram
$B$.  We say that an ordering $\om\in \mathcal O_{B}$ is \textit{perfect}
if $\om$ admits a Vershik map $\varphi_{\om}$ on $X_B$.
Denote by  $\mathcal P_B $ the set of
all perfect orderings  on $B$.  We call an ordering belonging to $\mathcal P_B^{\, c}$ (the complement of
$\mathcal P_B$ in $\mathcal O_{B}$) \textit{imperfect}.
\end{definition}

We observe that for a regular Bratteli diagram with an ordering $\om$, the Vershik map $\varphi_\om$, if it exists, is
defined in a unique way. More precisely, if $B$ is a
regular Bratteli diagram such that the set $\mathcal P_B$ is not
empty, then the map $\Phi: \omega \mapsto \varphi_\omega: \mathcal
P_B\to Homeo(X_B)$ is injective. Also, a necessary condition for $\om\in
\mathcal P_{B}$ is that $|X_{\max}(\om)|=|X_{\min}(\om)|$.

\begin{remark}
 We  note that if $B$ is a simple Bratteli diagram, with positive entries in all its incidence matrices, then the set
 $\mathcal P_B\neq \emptyset$. Indeed, it is not hard to see that if  $x$ and
$y$ are two paths in $X_B$ going through disjoint edges at each level, then one can find an ordering $\om$ on $B$ such that $X_{\max}(\om)
= \{x\}$ and $X_{\min}(\om) = \{y\}$: simply choose all maximal edges in $E_n$ to go through the same vertex that $x$ goes through at level $n-1$, and all minimal edges in $E_n$ to go through the same vertex that $y$ goes through at level $n-1$, for each $n$. Then  $\om$ is properly
 ordered, and  so $\om\in \mathcal P_{B}$.

Another example of a family of  perfect (indeed proper) orders  for a simple
Bratteli diagram, all of whose incidence matrices are positive, is the following.
For each $n$, fix a labeling $V_n=\{v(n,1), \ldots v(n,k_n)\}$ of $V_n$. Take  $v\in V_{n+1}$, and 
enumerate the edges from $E(v(n,1), v)$ in an arbitrary order from $0$ to $|E(v(n,1), v)| -1$. Similarly, for $2\leq i \leq k_n$, we enumerate edges from $E(v(n,i), v)$ by numbers from $\sum_{j=1}^{i-1} |E(v(n,j), v)|$ to $\sum_{j=1}^{i}       |E(v(n,j), v)|-1$. Repeating this procedure for each vertex $v\in V^*\backslash V_0$ and each level $n$, we define an order $\om_0$ on $B$, called a natural order. This is a variation of the well known `left-to-right' order. For $\om_0$, the unique minimal path runs through $v(n,1)$, and  the unique maximal path  runs through $v(n,k_n)$.
\end{remark}

In the next section, we will describe a class of non-simple Bratteli diagrams that
do not admit a perfect ordering.

\begin{proposition} \label{Odometer}
Let $B$ be a simple Bratteli diagram, where the entries of the incidence matrices $(F_n)$ are positive. Then
$\mathcal P_B =\mathcal  O_B$ holds if and only if
 $B$ is rank 1.
\end{proposition}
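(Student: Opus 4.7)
For the direction $(\Leftarrow)$, assume $B$ has rank $1$, so there is a telescoping $B'$ of $B$ with $|V'_n|=1$ for every $n\geq 1$. For any $\om\in \mathcal O_B$, the lexicographic ordering $\om'=L(\om)$ on $B'$ has a unique maximal and a unique minimal infinite path in $X_{B'}$ (the vertex sequence at every level is forced, and then so is the sequence of maximal/minimal edges), so $\om'$ is proper and $\varphi_{\om'}$ is a homeomorphism of $X_{B'}$. By Lemma \ref{good_orders_and_telescoping} this transfers to $\om$, giving $\mathcal P_B=\mathcal O_B$.

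For the converse, suppose $B$ is not of rank $1$, so $|V_n|\geq 2$ for all $n$ beyond some $N_0$. Telescope $B$ pairwise above $N_0$ to obtain $B'$; by positivity, the entries of $B'$'s incidence matrices satisfy $f'_{v,w}\geq |V_{n+1}|\geq 2$. Pick distinct vertices $p'_n,q'_n\in V'_n$ for each $n$, and specify an ordering $\om'$ on $B'$ with a \emph{swap--preserve} structure: the maximal edges into $p'_{n+1},q'_{n+1}$ have sources $q'_n,p'_n$ respectively; the minimal edges into $p'_{n+1},q'_{n+1}$ have sources $p'_n,q'_n$ respectively; and (using $f'\geq 2$) the predecessors of the maximal edges in $r^{-1}(p'_{n+1})$ and $r^{-1}(q'_{n+1})$ have sources $p'_n$ and $q'_n$ respectively. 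Because each edge of $B'$ corresponds to a finite path in $B$, this $\om'$ is realized as $L(\om)$ for some $\om\in\mathcal O_B$ by choosing, within each telescoped block of $B$, the max-edge, min-edge, and second-max-edge functions so that their compositions produce the specified sources on $B'$; positivity of $B$'s incidence matrices guarantees that all required edges of $B$ exist.

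The swap structure yields two alternating maximal paths on $B'$; fix $x$ with $r(x_n)=p'_n$ for odd $n$ and $r(x_n)=q'_n$ for even $n$. For each large $k$, let $z^{(k)}$ agree with $x$ on its first $k$ edges and take $z^{(k)}_{k+1}$ to be the predecessor of the maximal edge in $r^{-1}(p'_{k+1})$ if $k$ is odd, or in $r^{-1}(q'_{k+1})$ if $k$ is even (filling higher levels arbitrarily consistent with being a path). Then $\bar z^{(k)}_{k+1}$ is the corresponding maximal edge, whose source is $q'_k$ (for odd $k$) or $p'_k$ (for even $k$), so the first $k$ edges of $\varphi_0(z^{(k)})$ form the minimal path in $E(v_0,q'_k)$ or $E(v_0,p'_k)$. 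By the label-preserving min-structure, this minimal path passes through $q'_i$ (resp.\ $p'_i$) at every intermediate level $i<k$, so the level-$i$ edge of $\varphi_0(z^{(k)})$ alternates between two distinct edges as $k$ runs through parities, and $\varphi_0(z^{(k)})$ fails to converge despite $z^{(k)}\to x$. Thus $\varphi_0$ has no continuous extension at $x$, making $\om'$ imperfect, and then $\om$ is imperfect by Lemma \ref{good_orders_and_telescoping}. The main technical obstacle is the bookkeeping to realize $\om'$ as $L(\om)$; telescoping to enlarge the incidence entries to $\geq 2$ is the key device that renders the swap--preserve specification internally consistent.
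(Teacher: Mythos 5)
Your \emph{if} direction and your appeal to Lemma \ref{good_orders_and_telescoping} are fine, but the \emph{only if} direction has a genuine gap at the step where you claim that the swap--preserve order $\om'$ on the telescoping $B'$ ``is realized as $L(\om)$ \dots positivity of $B$'s incidence matrices guarantees that all required edges of $B$ exist.'' Existence of individual edges is not the issue: under a lexicographic order the three edges of $B'$ with range $v'$ that you prescribe (the maximal edge, its predecessor, and the minimal edge) are not independently adjustable. If a block runs upward from level $m$ of $B$, then the maximal edge of $B'$ into $v'$ is the all-maximal path of the block, its predecessor is that same path with its \emph{lowest} edge decremented (every $r^{-1}$-set has at least two edges), and the minimal edge is the all-minimal path. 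Hence all three prescribed sources are read off from the orders on the sets $r^{-1}(u)\subset E_{m+1}$ at the intermediate vertices $u\in V_{m+1}$ lying on these paths. When $|V_{m+1}|=2$, your swap condition forces the two max-intermediate vertices (for $p'_{n+1}$ and $q'_{n+1}$) to exhaust $V_{m+1}$, and together with the predecessor condition it forces the top two edges of each such $r^{-1}(u)$ to have sources $p'_n$ and $q'_n$, one each. Now take the stationary simple diagram with $V_n=\{1,2\}$ and $F_n=\left(\begin{smallmatrix}1&5\\1&5\end{smallmatrix}\right)$: the unique edge from vertex $1$ into each $u$ must then be either the maximal edge or its predecessor, so it is never the bottom edge (each $r^{-1}(u)$ has six edges), and consequently \emph{both} all-minimal paths of the block have source $2$ --- contradicting your ``preserve'' requirement that one of them have source $1$. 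This happens for every choice of block lengths and labels, so for this positive, simple, rank $2$ diagram your prescription is not of the form $L(\om)$ and no imperfect order is produced; the same obstruction already appears for the all-ones $2\times 2$ diagram with pairwise telescoping, where imposing the swap on maximal edges forces the minimal edges of $B'$ to swap as well.

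This over-constrained regime is precisely where the paper's proof takes a different route: it never touches predecessors of maximal edges, but instead constructs, directly on $B$, orderings with one maximal and at least two minimal paths, which are imperfect because a perfect ordering must satisfy $|X_{\max}(\om)|=|X_{\min}(\om)|$; and in the single residual case where this is impossible (all $F_n$ eventually equal to the all-ones $2\times 2$ matrix) it invokes the crossing argument of Example \ref{morse_example}. Your discontinuity argument at the alternating maximal path $x$ is correct \emph{granted} the prescribed structure, and pulling imperfection back from $B'$ to $B$ via Lemma \ref{good_orders_and_telescoping} is the right tool; but the joint realizability of the maximal, minimal, and predecessor-of-maximal conditions is exactly the point where a case analysis (for instance, falling back on a maximal/minimal count mismatch when the predecessor condition cannot be met) is unavoidable, and as written your realizability claim is false, so the proposition is not proved for diagrams such as the ones above.
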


\begin{proof}
The part `if' is obvious because the condition  $|V_n| =1$ for infinitely many
levels  $n$ implies any ordering is proper.

Conversely, suppose that the rank of $B$ is at least 2.  Then, for some $N$, $|V_n|\geq 2$ when $n>N$. We need to show that, in this case, there are imperfect orderings.

First,  assume that  infinitely often, $|V_n| \geq 3$. Call  three distinct vertices at these levels $u_n, \, v_n$ and $w_n$. For the other levels $n>ÊM$, there are at least two distinct vertices $u_n$ and $v_n$.
 For levels $n$ such that $|V_n| \geq 3$, choose all maximal edges in $E_{n+1}$ to have source $w_n$. Let the minimal edge with range $u_{n+1}$, $v_{n+1}$ have source $u_n$, $v_n$ respectively.  For levels $n$ such that $|V_n|=2$,  let the minimal edge with range $u_{n+1}$, $v_{n+1}$ have source $u_n$, $v_n$ respectively.
  Any order which satisfies these constraints has only one maximal path, and at least two minimal paths, so cannot be perfect.

Next suppose that $B$ has rank 2, and
suppose  two sequences of vertices $(v_n)$ and $(w_n)$ can be found such that $v_n \neq w_n$ for each $n>N$, $v_n, \, w_n \in V_n$ and
 $|E(w_n, w_{n+1})| >1$ infinitely often.
Let the minimal edge with range $v_{n+1}$ have source $v_n$. Similarly, let the minimal edge with range $w_{n+1}$ have source $w_n$. Whenever  $| E(w_n, w_{n+1})   |>1$, choose all maximal edges in $E_{n+1}$  to have source $w_n$.  The resulting order has one maximal and two minimal paths.

Finally suppose that $B$ does not satisfy the above conditions. Then, for all large $n$, the matrices
$F_n= \left(
      \begin{array}{cc}
        1 & 1 \\
        1  &  1 \\
            \end{array}
    \right),$ and there are orders on $B$ with two maximal and two minimal paths. To see this we just ensure that for all large $n$, the two minimal edges have distinct sources, as do the two maximal edges.  Now Example \ref{morse_example} shows that no such ordering is perfect.

  \end{proof}

In contrast, one can find
aperiodic diagrams for which any ordering is perfect. Indeed, it suffices to take a
rooted tree and turn it into a non-simple  Bratteli diagram $B$ by replacing
every single edge with a strictly larger number of edges. Then
every ordering on $B$ produces a continuous Vershik map.

\begin{remark} Let $(B, \om)$ be an ordered Bratteli diagram and let $\om'$
be an ordering on $B$ such that $\om$ and $\om'$ are different on $r^{-1}(v)$
only for a finite number of vertices $v$. Then $\om $ is perfect if and only if
$\om'$ is perfect.
\end{remark}

\begin{proposition}\label{continuity} Let $B$ be a regular Bratteli diagram
such that the set $\mathcal P_B$ is not empty. Let $\mathcal P_B$ be
equipped with the   topology induced from $\mathcal O_B$ and let
 the set $\Phi(\mathcal P_B)$ be equipped with the topology of uniform convergence induced  from the group $Homeo(X_B)$  where the map $\Phi: \omega \mapsto \varphi_\omega $ has been defined above.  Then  $\Phi : \mathcal P_B \to \Phi(\mathcal P_B)$ is a homeomorphism.
\end{proposition}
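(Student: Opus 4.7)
The plan is to exploit the injectivity of $\Phi$ (already noted just before Remark 2.15), so that only continuity of $\Phi$ and of $\Phi^{-1}$ remain; both relevant topologies are metrizable, and I will argue sequentially. Observe that on the compact metric space $X_B$, uniform convergence of homeomorphisms automatically entails uniform convergence of their inverses, so I need only control $\varphi_{\omega_n} \to \varphi_\omega$.

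For continuity of $\Phi$: I would take $\omega_n \to \omega$ in $\mathcal{P}_B$, fix a target level $m$, and choose $M \geq m$ and $N$ so that for $n \geq N$ the orderings $\omega_n$ and $\omega$ agree on $r^{-1}(v)$ for every $v \in V_1 \cup \cdots \cup V_M$. Then for every $x \in X_B$ I aim to show that $\varphi_{\omega_n}(x)$ and $\varphi_\omega(x)$ share their first $M$ edges. Writing $k(x,\omega)$ for the first level at which $x$ is not $\omega$-maximal (with value $\infty$ on $X_{\max}(\omega)$), three cases arise. If $k(x,\omega) \leq M$, the successor rule (iii) of Definition 2.13 depends only on the orderings at levels $\leq M$ and gives $\varphi_{\omega_n}(x) = \varphi_\omega(x)$ exactly. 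If $M < k(x,\omega) < \infty$, the first $M$ edges of $x$ are simultaneously $\omega$- and $\omega_n$-maximal, so the Vershik rule places common minimal paths as the first $M$ edges of both images. Finally, if $x \in X_{\max}(\omega)$, property (ii) forces $\varphi_\omega(x) \in X_{\min}(\omega)$, so its first $M$ edges are $\omega$-minimal; since $x$ is also $\omega_n$-maximal on the first $M$ edges, the previous analysis applied to $\omega_n$ (whether or not $x \in X_{\max}(\omega_n)$) yields the same minimal segment for $\varphi_{\omega_n}(x)$.

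For continuity of $\Phi^{-1}$: assuming $\varphi_{\omega_n} \to \varphi_\omega$ but $\omega_n \not\to \omega$, I would extract, by compactness of $\mathcal{O}_B$, a subsequence $\omega_{n_k} \to \omega' \in \mathcal{O}_B$ with $\omega' \neq \omega$. The case $k(x,\omega') \leq M$ of the previous argument, applied with $\omega'$ in place of $\omega$, shows that for each $x \in X_B \setminus X_{\max}(\omega')$ one has $\varphi_{\omega_{n_k}}(x)$ eventually equal to the partial Vershik map for $\omega'$ applied to $x$; the hypothesis meanwhile gives $\varphi_{\omega_{n_k}}(x) \to \varphi_\omega(x)$. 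Thus $\varphi_\omega$ coincides on $X_B \setminus X_{\max}(\omega')$ with that partial map, and regularity of $B$ makes this set dense. Continuity of $\varphi_\omega$ then exhibits it as a Vershik map for $\omega'$, so $\omega' \in \mathcal{P}_B$ and $\Phi(\omega') = \Phi(\omega)$; injectivity of $\Phi$ forces $\omega' = \omega$, a contradiction.

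The hard part will be the behaviour of $\Phi$ near $X_{\max}(\omega)$, where formula (iii) does not apply at $x$ itself; it is condition (ii) of the Vershik map that ties $\varphi_\omega(x)$ to an $\omega$-minimal path whose initial segment can be matched with that of $\varphi_{\omega_n}(x)$. The second subtle point is verifying that the subsequential limit $\omega'$ lies in $\mathcal{P}_B$ before injectivity can be invoked, and this is precisely where regularity of $B$, through denseness of $X_B \setminus X_{\max}(\omega')$, is essential.
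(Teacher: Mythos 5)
Your argument for the continuity of $\Phi^{-1}$ is sound, and it is a genuinely different route from the paper's: you pass to a subsequential limit $\omega'$ in the compact space $\mathcal O_B$, observe that $\varphi_\omega$ must agree with the partial map $\varphi_0$ of $\omega'$ on $X_B\setminus X_{\max}(\omega')$, upgrade this (via bijectivity of $\varphi_\omega$, which yields condition (ii) of Definition \ref{VershikMap}) to the statement that $\varphi_\omega$ is a Vershik map for $\omega'$, and then invoke uniqueness of Vershik maps on regular diagrams together with injectivity of $\Phi$. The paper instead argues directly: it fixes the first level at which two orders disagree and produces one explicit cylinder $F=U(f,e)$, with $f$ the common maximal path into $s(e)$, whose images under the two Vershik maps differ. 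Your version trades that concrete cylinder for a compactness-plus-uniqueness argument; both are legitimate.

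The continuity of $\Phi$ half, however, has a genuine gap, located exactly at the point you yourself call ``the hard part.'' In your second and third cases ($M<k(x,\omega)<\infty$ and $x\in X_{\max}(\omega)$) you assert that $\varphi_{\omega_n}(x)$ and $\varphi_\omega(x)$ begin with ``common minimal paths'' or ``the same minimal segment.'' What rule (iii) actually gives is that $\varphi_\omega(x)$ begins with the minimal path into the vertex $s(\overline{x_k})\in V_{k-1}$, while $\varphi_{\omega_n}(x)$ begins with the minimal path into $s(\overline{x_{k_n}})$, where $k,k_n>M$; these vertices, and hence the vertices of $V_M$ through which the two minimal initial segments pass, are governed by the orderings at levels above $M$, where $\omega$ and $\omega_n$ are allowed to differ. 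Minimal paths into different vertices of $V_M$ typically differ already at the first edge, so nothing places the two images in a common length-$M$ cylinder, and condition (ii) alone (``the image of a maximal path is some minimal path'') does not say which minimal path. This is not a presentational issue: take the stationary diagram with $V=\{a,b\}$, two edges from $a$ to $a$, two from $b$ to $b$, and one edge in each mixed position; let $\omega$ be the stationary perfect order with $w(a,n-1,n)=aba$, $w(b,n-1,n)=bab$, so that $\varphi_\omega(M_a)=m_b$; and let $\omega_n$ agree with $\omega$ up to level $n$ and, above level $n$, be the proper order with $w(a,\cdot)=aab$, $w(b,\cdot)=abb$, arranged so that the $\omega$-maximal $a\to a$ edge is placed first. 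Each $\omega_n$ is proper, hence perfect, and $\omega_n\to\omega$ in $\mathcal O_B$, yet $\varphi_{\omega_n}(M_a)$ begins with the length-$n$ truncation of $m_a$ while $\varphi_\omega(M_a)=m_b$, so $\sup_x d(\varphi_{\omega_n}(x),\varphi_\omega(x))$ stays bounded away from $0$. In other words, on maximal cylinders the Vershik image is not locally determined by finitely many levels of the order, so the case analysis you propose cannot close; note that this is also precisely the step at which the paper's own proof is terse, since the identity $\varphi_{\omega_n}(E_i)=\varphi_{\omega_0}(E_i)$ used there is immediate only for cylinders that are not maximal, and some additional hypothesis or input about where the maximal cylinders are sent is needed to control $\Phi$ near $X_{\max}(\omega)$.
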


\begin{proof} We need only to show that $\Phi$ and $\Phi^{-1}$
are continuous because injectivity of $\Phi$ is obvious.

Fix an ordering $\omega_0 \in \mathcal P_B$ and let $\varphi_{\omega_0}$
be the corresponding Vershik map. Consider a neighborhood $W=
W(\varphi_{\omega_0} ; E_1,...,E_k) = \{f\in Homeo(X_B) : f(E_i) =
\varphi_{\omega_0}(E_i), \ i=1,...,k\}$ of $\varphi_{\omega_0}$ defined by
clopen sets $E_1,...,E_k$. It is well know that the uniform topology is generated by the base of neighborhoods $\{W\}$. Take $m\in \N$ such that all clopen sets
$E_1,...,E_k$ `can be seen' at the first $m$ levels of the diagram $B$.
This means that every set $E_i$ is a finite union of the cylinder sets
defined by finite paths of length  $m$.

Suppose $\omega_n \to \omega_0$ where $\omega_n \in \mathcal P_B$.
By (\ref{orderings_set}), the ordering $\omega_0$ is an infinite sequence
in the product $\prod_{v\in V^* \backslash{V_0}} P_v$. Let $Q$ be the neighborhood of $\omega_0$ in $\mathcal O_{B}$ which is defined by the finite part of
$\omega_0$ from $v_0$ to $V_{m+1}$. Find $N$ such that $\omega_n \in Q$
 for all $n\geq N$. This means that the ordering $\omega_n\ (n\geq N) $
 agrees with $\omega_0$ on the first $m+1$ levels of the diagram $B$.
 Therefore, $\varphi_{\omega_n}$ acts as $\varphi_{\omega_0}$ on all finite
 paths from $v_0$ to $V_m$. Hence, $\varphi_{\omega_n}(E_i) =
 \varphi_{\omega_0}(E_i)$ and $\varphi_{\omega_n}\in W$.

Conversely, let $\varphi_{\omega_n} \to \varphi_\omega$ in the topology
of uniform convergence; we prove that $\omega_n \to \omega$. Take the
neighborhood $Q(\omega)$ of $\om$  consisting of all orderings $\omega'$ such
that $\omega'$ agrees with $\omega$ on the sets $r^{-1}(v)$, where
$v\in \bigcup_{i=1}^N V_i$. Let $F_1,...,F_p$ denote all cylinder subsets of
$X_B$ corresponding to the finite paths between $v_0$ and the vertices
from $\bigcup_{i=1}^{N+1} V_i$. Consider the neighborhood $W =
W(\varphi_\omega; F_1,...,F_p)$. Then there exists an $m\in \N$ such that
$\varphi_{\omega_i} \in W$ for $i\geq m$. This means that
$\varphi_{\omega_i}(F_j) = \varphi_{\omega}(F_j)$ for all $j=1,...,p$.
Let us check that $\omega_i\in Q(\omega)$ for $i\geq m$. Indeed, if one
assumes that $\omega'\notin Q(\omega)$ then there exists a least $k$
and a vertex $v\in V_k$ such that $\omega$ and $\omega'$ define
different linear orders on $r^{-1}(v)$, but $\omega$ and $\omega'$ agree
for all $v\in \bigcup_{i=1}^{k-1} V_i$. Let $e$ be an edge from $r^{-1}(v)$
such that the $\omega$-successor and $\omega'$-successor of $e$ are
different edges. Then take the cylinder set $F$ which corresponds to the finite
path $(f,e)$, where $f$ is  the maximal path from $v_0$ to $s(e)$ for
 both the  orders. It follows from the above construction that
$\varphi_\omega(F) \neq \varphi_{\omega'}(F)$, a contradiction.
\end{proof}

\begin{theorem}\label{GoodBadOrderingsAreDense}
Let $B$ be a simple rank $d$ Bratteli diagram where $d\geq 2$, and all incidence matrix entries are positive.
 Then both sets $\mathcal P_B$ and ${\mathcal P}_B^{\, c} $ are dense in $\mathcal O_B$.
\end{theorem}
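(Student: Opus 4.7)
The topology of $\mathcal O_B$ has a base consisting of neighborhoods of the form $Q = \{\om : \om|_{r^{-1}(v)} = \om_0|_{r^{-1}(v)} \text{ for } v \in \bigcup_{i=1}^N V_i\}$, so my plan is to show that for each such restriction of an order to the first $N$ levels, both a perfect and an imperfect extension exist. All modifications I make will affect $r^{-1}(v)$ only for $v$ at levels strictly greater than $N$, so membership in $Q$ will be automatic.

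For density of $\mathcal P_B$, I would extend the given finite restriction to a proper (hence perfect) ordering. Since $B$ has rank $d \geq 2$, the set $\{n : |V_n| \geq 2\}$ is infinite; I fix sequences of vertices $(u_n), (w_n)$, $n \geq N$, with $u_n \neq w_n$ at each such level. For each $v \in V_{n+1}$ with $n \geq N$, I declare that the minimal edge of $r^{-1}(v)$ has source $u_n$ and the maximal edge has source $w_n$; positivity of the incidence matrices guarantees both edges exist. The extension has a unique maximal infinite path (running through $w_n$ for $n \geq N$, and through the unique $\om_0$-max predecessor of $w_N$ below), a unique minimal path (through the $u_n$'s), and these are distinct because $u_n \neq w_n$ infinitely often; hence the ordering is proper.

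For density of $\mathcal P_B^{\,c}$, I would exploit the necessary condition that $\om \in \mathcal P_B$ implies $|X_{\max}(\om)| = |X_{\min}(\om)|$, adapting the three-case analysis in Proposition \ref{Odometer} to an arbitrary basic neighborhood $Q$. If $|V_n| \geq 3$ for infinitely many $n \geq N$, I pick three distinct vertices $u_n, v_n, w_n$ at such levels, force every maximal edge into $V_{n+1}$ to have source $w_n$, and peg the minimal edges into $u_{n+1}$ and $v_{n+1}$ at $u_n$ and $v_n$; this produces one maximal path and at least two minimal paths. If instead $B$ has rank $2$ but sequences $(v_n), (w_n)$ can be chosen with $v_n \neq w_n$ and $|E(w_n, w_{n+1})| > 1$ infinitely often, the parallel construction gives one maximal and two minimal paths. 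The remaining case is $F_n = \begin{pmatrix} 1 & 1 \\ 1 & 1 \end{pmatrix}$ for all sufficiently large $n$, in which I build an order with two maximal and two minimal paths whose two maximal edges (and two minimal edges) have distinct sources at every such level, and invoke Example \ref{morse_example} to certify imperfection.

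The main obstacle is this last subcase: here $|X_{\max}| = |X_{\min}|$, so the cardinality obstruction fails and perfection has to be ruled out by the finer, substitution-theoretic argument behind Example \ref{morse_example}. Everywhere else, the construction is simply a localization of the proof of Proposition \ref{Odometer}, with positivity of the incidence matrices providing the freedom to route maximal and minimal edges through prescribed source vertices at every level beyond $N$.
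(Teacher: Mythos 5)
Your proposal is correct and is essentially the paper's own argument: the paper also works inside a basic cylinder neighborhood fixing the first $N$ levels, produces a proper (hence perfect) extension by routing all maximal edges through one sequence of vertices and all minimal edges through a disjoint one, and for the imperfect extension simply says to repeat the case analysis from the proof of Proposition \ref{Odometer} beyond level $N$, including the appeal to Example \ref{morse_example} in the residual $\left(\begin{smallmatrix}1&1\\1&1\end{smallmatrix}\right)$ case. The only point to state carefully (as Proposition \ref{Odometer} does) is that in your first imperfect case the minimal edges must be pinned through $u_n$ and $v_n$ at \emph{every} level beyond $N$, not only at the levels where $|V_n|\geq 3$, so that two distinct minimal paths actually survive.
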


\begin{proof} By Proposition
\ref{Odometer},  $\mathcal P_B^{\, c} \neq \emptyset$. Take an ordering $\om \in \mathcal O_B$ and consider its
neighborhood $U_N(\om) = \{\omega' \in \mathcal O_B : \om\ \mbox{and}
\ \omega' \ \mbox{coincide\ on\ $r^{-1}(v)$\ for \ all}\ v \in
\bigcup_{i=1}^N V_i\}$. We have assumed that $N$ is large enough such that $|V_n|\geq 2$ for $n>N$.

Then there
exists a perfect ordering $\om_1$  belonging to $U_N(\om)$. To see this, choose $(u_n)_{n>N}$, $(v_n)_{n>N}$ where $u_n\neq v_n$ and $u_n$, $v_n \in V_n$.  Choose an ordering all of whose maximal edges in $E_{n+1}$ have source $u_n$ and all of whose minimal edges in $E_{n+1}$ have source $v_n$, for $n>N$. Let this ordering  agree with $\om$ up to level $N$. This ordering is proper, hence perfect.

Conversely, if $\om$ is perfect, we can  construct $\om^{N}$ by letting
 $\om^{N}$ agree with $\om$ on the first $N$ levels. Beyond level $N$, we work
 as in the proof of Proposition
\ref{Odometer} to define $\om^N$ so that it is imperfect.

\end{proof}

\section{Finite rank ordered Bratteli diagrams}\label{language_skeleton}

 In this section, we focus on the study of orderings on a finite rank
Bratteli diagram $B$. To do this, we define new notions related to an unordered finite
rank Bratteli diagram that will be used in our considerations.
 If  $(B, \om)$ is ordered, and $V_n= V$ for each $n$, in Section \ref{language_section} we first define the {\em language} generated by $\om$, and characterize whether $(B,\om)$ is perfect in terms of the language of $\om$.  Our notions of skeleton and associated graph are defined in Section \ref{skeleton_definition} for  non-ordered diagrams. We note  that on one diagram, there exist several skeletons. By  telescoping a perfectly ordered diagram in a particular way, we will obtain the (unique, up to labeling) {\em skeleton} associated to the lexicographical image of $\om$ under the telescoping.  In the {\em associated graph} $\mathcal H$, paths  will correspond to (families of) words in $\om$'s language. Given a skeleton $\mathcal F$ on a diagram,  we describe how  $\mathcal H$ constrains us when trying to extend $\mathcal F$
 to a perfect order.

 In Section \ref{no_perfection} we describe a class of non-simple diagrams that do not admit any perfect ordering, using the poor connectivity properties of any skeleton's associated graph.  In Section \ref{odometer}   we give descriptions of perfect orderings that yield odometers, in terms of their language, and explicitly
describe, in terms of an associated skeleton and associated graph,  the class of rank $d$ diagrams  that can have a
perfect ordering with exactly $k\leq d$ maximal and minimal paths.


\subsection{Language of a finite rank diagram}\label{language_section}

Let $\om$ be an ordering on a  Bratteli diagram $B$, where
$V_{n}= V $ for each $n\geq 1$, and $|V|=d$.
For each vertex $v\in V_n$ and each $m$ such that $1\leq m<n$, consider
$\bigcup_{w\in V_{m}}E(w,v)$ as the $\om$-ordered set
$\{e_{1},\ldots e_{p}\}$ where $e_{i}<e_{i+1}$ for $1\leq i\leq
p-1$. Define the word $w(v,m,n):=s(e_{1})s(e_{2})\ldots s(e_{p})$ over
the alphabet $V$.
We use the notation $w' \subseteq w$ to indicate that $w'$ is a subword
of $w$, and, if $w$ and $w'$ are two words, by $w w'$ we mean the word which is the concatenation of $w$ and $w'$.

\begin{definition}\label{language}
The set
$$
\mathcal L_{B, \om}= \{w: w \subseteq w(v_{n},m_{n},n), \ \mbox{ for\ infinitely\ many}\ n \ \mbox{where}\  v_{n}\in V_n,  1\leq m_{n}<n \}
$$
is called the \textit{language} of $B$ with respect to the ordering $\om$.
\end{definition}

We remark that the notion of the language $\mathcal L_{B, \om}$ is not always  robust under telescoping:  let $(B', \omega')$ be a telescoping of an ordered Bratteli diagram $(B, \omega)$ where
$\omega' = L(\om)$. Then $\mathcal L_{B', \om'} \subset \mathcal L_{B, \om}$ where the inclusion can be strict. For example, consider $B$ where
\begin{equation}
F_{2n} = \left(
      \begin{array}{cc}
        1 & 2 \\
        2     &  2 \\
      \end{array}
    \right), \ \ \
F_{2n-1} = \left(
      \begin{array}{cc}
        2 & 1 \\
        3  &  1 \\
      \end{array}
    \right), \ \ n \geq 1.
\end{equation}
Let $\om$ be defined by the substitution
$\tau_{1}(a)=aba$, $\tau_{1}(b) =aaba$ on $E_{2n}$, and by the substitution
$\tau_{2}(a) = bab$, $\tau_{2}(b)=abba$ on $E_{2n-1}$ for $n\geq 1$. Thus  the order of letters in a word $\tau(v)$ determines the order on the sets of edges with  range $v$. Then
$\{aa, ab, ba, bb\}\subset \mathcal L_{B, \om}$. Now  telescope $B$ to the levels $(2n+1)$ to get the stationary Bratteli diagram $B'$ whose incidence matrix is
\begin{equation}
F_{n}' =
 \left(
      \begin{array}{cc}
        1 & 2 \\
        2  &  2 \\
      \end{array}
    \right)
\cdot
\left(
      \begin{array}{cc}
        2 & 1 \\
        3  &  1 \\
      \end{array}
    \right)
=
\left(
      \begin{array}{cc}
        8 & 3 \\
        10  &  4 \\
      \end{array}
    \right)
\end{equation}
for each $n\geq 1$, so that
$\om':=L(\om)$ is defined by the substitution $\tau:=\tau_{1}\circ
\tau_{2}$ where $\tau(a) = aaba\,aba\,aaba$ and $\tau(b) =
aba\,aaba\,aaba\,aba$, then $bb \,\not\in \mathcal L_{B', \om'}.$
Note however  that both $\om$ and $\om'$ are perfect (in fact proper).

Also, in the special case where $B$ is stationary and $\om$ is defined by a
substitution $\tau$ (so that $\om$ is also stationary), we see that $\mathcal
L_{B, \om}$ is precisely the language ${\mathcal L}_{\tau}$ defined by the
substitution $\tau$, and in this case, if $B'$ is a telescoping of $B$
to levels $(n_{k})$ with $\om'=L(\om)$, then $\mathcal L_{B, \om}=
\mathcal L_{B', \om'}$. Indeed, any word $w \in \mathcal L_{B, \om}$ is
a subword of $\tau^{j}(a)$ for some $j \in {\mathbb N}$ and letter
$a$. Now the order on the $k$-th level of $B'$ is generated by
$\tau^{n_{k}-n_{k-1}}$, and as long as $n_{k}-n_{k-1}>j$, we will see $w$
as a subword of $w(a,n_{k-1},n_{k})\subset\mathcal L_{B', \om'}$. The
relationship between ${\mathcal L}_{B,\om}$ and the continuity of the
Vershik map has been studied in \cite{yassawi:closing:2011} in the case
where $\om$ is stationary, i.e., generated by a substitution, and also
in \cite{holton:2001}\footnote{The relevant formula on Page 5 is
incorrect in the final version: the correct version is in the preprint
which can be found at http://combinatorics.cis.strath.ac.uk/papers/lucaz.}.

 \begin{definition}Suppose $B$ is such that $V_n=V$ for each $n\geq 1$.
If $\om$ is an order on $B$, where a maximal
(minimal) path $M$ ($m$) goes through the same vertex $v_{M}$
($v_{m}$) for each  level $n\geq 1$ of $B$, we will call this path {\em
vertical}.
\end{definition}

We note that for any order $\om$ on a finite rank Bratteli diagram $B$ there exists a telescoping $B'$ of $B$ such that the extremal (maximal and minimal) paths with respect to $\om' = L(\om)$ are vertical.

The following proposition characterizes when $\om$ is
a perfect ordering on such a finite rank Bratteli diagram.

\begin{proposition} \label{ExistenceVershikMap}
Let $(B, \om)$ be a finite rank ordered Bratteli diagram, where
$V_{n}= V$ for each $n\geq 1$ Suppose that the
  $\om$-maximal  and $\om$-minimal paths  $M_1,...,M_k$ and $m_1,...,m_{k'}$  are vertical passing through the vertices $v_{M_1},\ldots , v_{M_k}$ and $v_{m_1},\ldots , v_{m_{k'}}$ respectively.  Then $\om$ is perfect if and only if

\begin{enumerate}\item $ k = k'$ and

\item there is a permutation $\sigma$ of $\{1,\ldots k\}$ such that for
each $i\in \{1,...,k\}$, $v_{M_i}v_{m_{j}} \, \in \mathcal
L_{B,\om}$ if and only if $j=\sigma(i)$.

\end{enumerate}

\end{proposition}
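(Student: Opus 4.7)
My proof plan has two main parts for the two implications, plus a remark on the anticipated obstacle.

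Assume first that $\om$ is perfect, so $\varphi_\om$ is a homeomorphism. Since $\varphi_\om$ maps $X_{\max}(\om)$ bijectively onto $X_{\min}(\om)$, we get $k=k'$ and may define $\sigma\in S_k$ by $\varphi_\om(M_i)=m_{\sigma(i)}$. To show $v_{M_i}v_{m_{\sigma(i)}}\in\mathcal L_{B,\om}$, I would pick any sequence $y^{(n)}\to M_i$ with $y^{(n)}\ne M_i$; such a sequence exists because $X_B$ has no isolated points, and because $X_{\max}(\om)$ is finite we can also arrange $y^{(n)}\notin X_{\max}(\om)$ and that the first edge of disagreement with $M_i$ is non-maximal, so that the first non-maximal level $k_n$ of $y^{(n)}$ satisfies $k_n\to\infty$. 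Unpacking the Vershik formula, for every fixed $m$ and every $n$ large, the two finite paths
\[
P^{(n)}=(y^{(n)}_{m+1},\ldots,y^{(n)}_{k_n}),\qquad P'^{(n)}=(\varphi_\om(y^{(n)})_{m+1},\ldots,\varphi_\om(y^{(n)})_{k_n})
\]
are consecutive in the lex order on $E(V_m,r(y^{(n)}_{k_n}))$: the first is a maximal segment from $v_{M_i}$ followed by $y^{(n)}_{k_n}$, the second is a minimal segment followed by the successor of $y^{(n)}_{k_n}$. Continuity of $\varphi_\om$ gives $\varphi_\om(y^{(n)})\to m_{\sigma(i)}$, which is vertical through $v_{m_{\sigma(i)}}$, so the source of $P'^{(n)}$ equals $v_{m_{\sigma(i)}}$ for $n$ large. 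Thus $v_{M_i}v_{m_{\sigma(i)}}$ is a two-letter subword of $w(r(y^{(n)}_{k_n}),m,k_n)$ for infinitely many $n$, and lies in $\mathcal L_{B,\om}$.

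For the converse half of (2), suppose $v_{M_i}v_{m_j}\in\mathcal L_{B,\om}$. Then for infinitely many $n$ there exist consecutive paths $P^{(n)}<P'^{(n)}$ in $E(V_{m_n},v_n)$ starting at $v_{M_i}$ and $v_{m_j}$, with a divergence level $\ell_n\in(m_n,n]$. I would construct an infinite path $z^{(n)}$ by prefixing $P^{(n)}$ with $M_i$'s first $m_n$ edges and extending arbitrarily above level $n$, and construct $w^{(n)}$ similarly from $P'^{(n)}$ using $m_j$'s prefix; uniqueness of minimal lifts yields $w^{(n)}=\varphi_\om(z^{(n)})$. Along a subsequence with $\ell_n\to\infty$, $z^{(n)}$'s initial $\ell_n-1$ edges are maximal, so every accumulation point of $(z^{(n)})$ lies in $X_{\max}(\om)$; pigeonhole on the finite vertex set together with the fact that distinct vertical maximal paths have distinct associated vertices identifies this accumulation as $M_i$. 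The analogous argument gives $w^{(n)}\to m_j$, and continuity of $\varphi_\om$ yields $m_j=\varphi_\om(M_i)=m_{\sigma(i)}$.

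For the converse implication, assume (1) and (2). Define $\varphi_\om$ on $X_B\setminus X_{\max}(\om)$ by the Vershik rule (automatically continuous there) and extend by $\varphi_\om(M_i):=m_{\sigma(i)}$; bijectivity follows from (1). It suffices to verify continuity at each $M_i$ (continuity of $\varphi_\om^{-1}$ at each $m_j$ being symmetric). Given $y^{(n)}\to M_i$ with $y^{(n)}\notin X_{\max}(\om)$, by compactness pass to a convergent subsequence $\varphi_\om(y^{(n_\ell)})\to z$. The first non-maximal levels $k_{n_\ell}$ tend to infinity (after a further subsequence), so for every fixed level $m$ the edge $\varphi_\om(y^{(n_\ell)})_m$ eventually lies on a minimal finite path and is hence a minimal edge; passing to the limit, $z_m$ is minimal for every $m$, so $z=m_j$ for some $j$. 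The consecutive-paths computation from the forward direction, which uses only the Vershik formula and not continuity, now produces the subword $v_{M_i}v_{m_j}$ in $w(r(y^{(n_\ell)}_{k_{n_\ell}}),m,k_{n_\ell})$ for large $\ell$, so $v_{M_i}v_{m_j}\in\mathcal L_{B,\om}$; hypothesis (2) forces $j=\sigma(i)$. Since every convergent subsequence limits to $m_{\sigma(i)}$, compactness yields $\varphi_\om(y^{(n)})\to m_{\sigma(i)}$, as required.

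The main obstacle I foresee is in the uniqueness step of the forward direction: ensuring that the auxiliary sequence $z^{(n)}$ genuinely converges to $M_i$. The natural construction produces a sequence whose first non-maximal level is the divergence level $\ell_n$ of $P^{(n)},P'^{(n)}$, and one has to exclude the degenerate case in which $\ell_n$ remains bounded along all witnessing $n$; in that case the subword $v_{M_i}v_{m_j}$ arises from a local transition at a fixed level $\ell_0$ extended by shared tails to arbitrarily high $n$, and the naive compactness argument produces an accumulation point of $(z^{(n)})$ outside $X_{\max}(\om)$. Handling this case cleanly should use continuity of $\varphi_\om$ at the accumulation point together with finite-rank pigeonhole on the source vertices of the max and min segments and the uniqueness of the vertical maximal (resp.\ minimal) path through each $v_{M_r}$ (resp.\ $v_{m_s}$).
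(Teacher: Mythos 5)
Your overall route is the same as the paper's: in the forward direction you read off the two‑letter word from a pair of consecutive finite paths produced by the Vershik rule and use continuity of $\varphi_\om$ to identify their sources, and in the converse direction you extend by $\sigma$, note that every subsequential limit of $\varphi_\om(y^{(n)})$ consists of minimal edges and is therefore one of the $m_j$, and invoke the uniqueness in condition (2); this is exactly the paper's argument (the paper merely normalizes $x_n$ to agree with $M_i$ on its first $n$ edges, which you avoid at no cost). So the only point that needs discussion is the obstacle you flag yourself: occurrences of $v_{M_i}v_{m_j}$ whose divergence level $\ell_n$ stays bounded.

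Here the fix you sketch (continuity at the accumulation point plus finite‑rank pigeonhole) cannot work, because under a literal reading of Definition \ref{language} the implication you want is simply false in that degenerate case. Indeed, change a perfect order only on $r^{-1}(u)$ for a single $u\in V_2$, keeping the minimal and maximal edges of $r^{-1}(u)$ fixed but making two interior edges $e<f$ with $s(e)=v_{M_i}$, $s(f)=v_{m_j}$, $j\neq\sigma(i)$, consecutive. The extremal edges, the vertical extremal paths, the perfectness of the order and the induced bijection $\sigma$ are all unchanged (the order was altered at only finitely many vertices, and $\varphi_\om$ is unchanged on all paths whose first non‑maximal edge lies above level $2$), yet for every $n$ and every path $(q_3,\ldots,q_n)$ from $u$ to $v_n\in V_n$ the pair $(e,q_3,\ldots,q_n)<(f,q_3,\ldots,q_n)$ is consecutive in $E(V_1,v_n)$, so $v_{M_i}v_{m_j}\subseteq w(v_n,1,n)$ for all large $n$ and hence lies in $\mathcal L_{B,\om}$ as literally defined. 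The correct resolution is not a sharper compactness argument but the reading of the language indicated by the footnote to Definition \ref{language} and by Remark \ref{telescoping_assumption}: only occurrences $v_{M_i}v_{m_j}\subseteq w(v,m,n)$ with $m$ arbitrarily large matter (equivalently, one is free to disregard finitely many initial levels, which affects neither perfectness nor the hypotheses). With that reading $m_n\to\infty$ forces $\ell_n\to\infty$, the degenerate case disappears, and your construction of $z^{(n)}$ and $w^{(n)}$ closes the argument verbatim; this is also precisely what the paper's terse ``by the same argument'' step silently assumes.
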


\begin{proof} We first assume that the Vershik map $\varphi_\om$ exists.
Then $\varphi_\om$ defines a bijection between the finite sets
$X_{\max}(\om)$ and $X_{\min}(\om)$ by sending each $M_i$ to some $m_j$: let $\sigma(i)=j$.
Clearly, $k = k'$. We need to check that $v_{M_i}v_{m_j}$ is in the
language ${\mathcal L}_{B,\om}$  if and only if $j=\sigma(i)$.
 It follows from continuity of $\varphi_\om$ and the relation
$\varphi_\om(M_i) = m_j$ that if $x_n \to M_i$ then $\varphi_\om(x_n) = y_n
\to m_j$ as $n\to \infty$. We see that, for
every $n$, the condition $\varphi_\om(x_n) = y_n$ implies that  $v_{M_i}v_{m_j} \,\in
w(v,m,N)$ for some $v \in V_{N}$ and some $m<N$,  because $x_n$ and $y_n$ are taken from neighborhoods
generated by finite paths going through $v_{M_i}$ and $v_{m_j}$
respectively.  Furthermore, as $n\rightarrow \infty$, so does $N$ and also $m$. Hence
$v_{M_i}v_{m_j}\,\in {\mathcal L}_{B,\om}$ when $j=\sigma(i)$.
By the same argument,  if $v_{M_i}v_{m_k}\in
\mathcal L_{B,\om}$ for some $k\neq \sigma(i)$, then  one can find $x_n \rightarrow M_i$ such that $\varphi_\om(x_n) = y_n \rightarrow m_k$, a contradiction.

Conversely, assuming that (1) and (2) hold, extend $\varphi_{\om}$ to
 $X_{\max}(\om)$ by defining $\varphi(M_{i}) := m_{\sigma(i)}$.
It is obvious that
 $\varphi_\om$ is one-to-one. Fix a pair $(M_i, m_j)$ where $j = \sigma (i)$, and let $x_n \to M_i$
as $n\to \infty$; we show that $y_n =  \varphi_\om(x_n) \to m_j$.
We can assume that the first $n$ edges of $x_n$
coincide with those of $M_i$, i.e. $x_n = \overline{e}_{\max}^{(n)}(v_0, v_{M_i})e_{n+1}e_{n+2}\cdots$
where $e_{n+1}$ is not maximal in $r^{-1}(r(e_{n+1}))$. Then
$y_n = \overline{f}_{\min}^{(n)}(v_0, s(e'_{n+1}))e'_{n+1}
e_{n+2}\cdots$ where $e'_{n+1}$ is the successor of $e_{n+1}$. Take a
subsequence $(y'_n)$ of $(y_n)$ convergent to a point $z\in X_B$.
By construction, $z$ must be a minimal path. It follows from the uniqueness
of $j$ in condition (2) that $z = m_j$; this proves the continuity of
$\varphi_\om$.

\end{proof}

\begin{example}
 \label{stationary_example_a} Let $(B, \om)$ be a stationary ordered Bratteli diagram whose vertex set $V_n= \{a,b,c,d\}$ for each $n\geq 1$, and where the ordering is defined  by the substitution $a \to acbda, b\to
bdcbdacb, c\to acdcb, d\to bdacda$. There are two pairs of vertical
maximal and minimal paths going through vertices $a$ and $b$.
The words of length two that appear in $\mathcal L_{B, \om}$ are $\{aa, ac,bb,  bd, cb, cd, da, dc  \} $ and using Proposition \ref{ExistenceVershikMap}, we conclude that $\om \in \mathcal P_{B}$ and $\varphi_\om(M_a) = m_a$, and  $\varphi_\om(M_b) = m_b$.
\end{example}

\begin{example}\label{morse_example}

Let $B$ be the stationary ordered Bratteli diagram whose vertex set $V_n= \{a,b\}$ for each $n\geq 1$, and whose incidence matrices $F_n =  \left(
      \begin{array}{cc}
        1 & 1 \\
        1  &  1 \\
            \end{array}
    \right)$ for each $n$. We claim that any ordering on $B$ with two maximal and two minimal paths cannot be perfect. The only possible choices to ensure that $\om$ has this many extremal paths is, for all large $n$, to either choose the ordering
    $w(a,n,n+1) = ab$ and  $w(b,n,n+1) = ba$,  or to choose  the ordering  $w(a,n,n+1) = ba$ and  $w(b,n,n+1) = ab$. Whatever choice one makes at level $n$ and level $n+1$, all four words $\{ aa,ab,ba,bb\}$ occur somewhere in one of the two words $w(a,n,n+2)$ or $w(b,n,n+2)$. Thus, $\om$ cannot be perfect.

\end{example}

\begin{remark} \label{telescoping_assumption} Suppose that $(B,\om)$ satisfies the conditions of Proposition \ref{ExistenceVershikMap}. This means that there exists an $N$ such that if we see
$v_{M_i}v_{m_j}$ appearing in some word $w(v,m,n)$ with $m\geq N$, then $j=\sigma(i)$. We can telescope $B$ to levels $N, N+1, N+2, \ldots $ so that if we see $v_{M_i}v_{m_j}$ appearing in some word $w(v,m,n)$ with $m\geq 1$, then $j=\sigma(i)$. Thus, unless otherwise indicated, for the remainder of Section \ref{language_skeleton}, when we have an ordered diagram $(B,\om)$ that satisfies the conditions of Proposition \ref{ExistenceVershikMap}, {\em we shall assume that  if $v_{M_i}v_{m_j} \subset w(v,m,n)$ with $m\geq 1$, then $j=\sigma(i)$.}
\end{remark}

We now generalize Proposition \ref{ExistenceVershikMap} to arbitrary finite rank diagrams, where the extremal paths are not necessarily vertical. Although the notion of language is not defined for these diagrams, we can still define, and use words $w(v,m,n)$ for $v\in V_n$ and $m<n$. The proof of this lemma is elementary, so we  omit it,
although Figure \ref{continuity_1} is explanatory.

\begin{lemma}\label{ExistenceVershikMap_Part_2}
Let $B$ be a finite rank diagram.
  Then the following statements are equivalent:

 \begin{enumerate} \item

$\om\not\in \mathcal P_{B}$;

\item For some $\om$ maximal path $M$, and two $\om$ minimal paths $m$ and $m^{*}$, there exist strictly increasing sequences of levels   $(n_{k})$, $(n_{k}^{*})$, $(N_{k})$ and  $(N_{k}^{*})$, vertices $\{w_{k}, \, v_{k}\}\subset V_{n_k}$,
$\{w_{k}^{*},\, v_{k}^{*}\}\subset V_{n_{k}^{*}}$, vertices $u_{k} \, \in  V_{N_{k}}$,  $u_{k}^{*} \, \in  V_{N_{k}^{*}}$ such that $M$  passes through $w_{k}$ and $w_{k}^{*}$, $m$ and $m^{*}$ pass through $v_{k}$ and $v_{k}^{*}$ respectively,   and $w_{k}v_{k} \subset w(u_{k},n_{k},N_{k})$,
 $w_{k}^{*}v_{k}^{*} \subset w(u_{k}^{*},n_{k}^{*},N_{k}^{*})$.
\end{enumerate}\end{lemma}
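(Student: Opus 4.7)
The plan is to prove both implications by relating the failure of continuity of $\varphi_0$ at a maximal path $M$ to the existence of consecutive ``witness'' finite paths in the ordering. Throughout I will use that $X_{\max}(\om)$ and $X_{\min}(\om)$ are finite (Proposition 6.2 of \cite{bezuglyi_kwiatkowski_medynets:2009}) and the explicit description of $\varphi_0$ from Definition \ref{VershikMap}.

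For $(1) \Longrightarrow (2)$, suppose $\om \not\in \mathcal P_B$. Then $\varphi_0$ cannot be extended continuously at some $M \in X_{\max}(\om)$, and the finiteness of $X_{\min}(\om)$ allows me, after passing to subsequences, to select $(x_n)$ and $(x_n^{*})$ in $X_B \setminus X_{\max}(\om)$ with $x_n, x_n^{*} \to M$ while $\varphi_0(x_n) \to m$ and $\varphi_0(x_n^{*}) \to m^{*}$ for distinct $m, m^{*} \in X_{\min}(\om)$. For each level $\ell$, I choose $n$ so that $x_n$ agrees with $M$ and $\varphi_0(x_n)$ agrees with $m$ on levels $1, \ldots, \ell$, and let $j_n > \ell$ be the lowest level at which $x_n$ has a non-maximal edge. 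Setting $n_k = \ell$, $N_k = j_n$, $u_k = r(x_n^{(j_n)})$, $w_k = r(M_{n_k})$, $v_k = r(m_{n_k})$, the finite paths $P_1 = (x_n^{(n_k+1)}, \ldots, x_n^{(j_n)})$ and $P_2 = (\varphi_0(x_n)^{(n_k+1)}, \ldots, \varphi_0(x_n)^{(j_n)})$ lie in $\bigcup_{v \in V_{n_k}} E(v, u_k)$ with $s(P_1) = w_k$ and $s(P_2) = v_k$. Moreover $P_1$ is all-maximal except at the topmost edge $x_n^{(j_n)}$ (non-maximal by choice of $j_n$), while $P_2$ is all-minimal (by the definition of $\varphi_0$) except at the topmost edge $\overline{x_n^{(j_n)}}$. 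A direct lex comparison then shows that $P_2$ is the immediate lex-successor of $P_1$ in $\bigcup_{v \in V_{n_k}} E(v, u_k)$, so $w_k v_k$ appears as a pair of consecutive letters in $w(u_k, n_k, N_k)$. The starred case is symmetric.

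For $(2) \Longrightarrow (1)$, given $P_1, P_2$ witnessing $w_k v_k \subset w(u_k, n_k, N_k)$ (so $P_2$ is the immediate lex-successor of $P_1$), I build $x^{(k)}$ as the unique $\om$-maximal finite path from $v_0$ to $w_k$ on levels $1, \ldots, n_k$, followed by $P_1$ on levels $n_k+1, \ldots, N_k$, extended arbitrarily beyond. Since $M$ is maximal and passes through $w_k$, uniqueness of the maximal finite path to $w_k$ forces $x^{(k)}$ to agree with $M$ on levels $1, \ldots, n_k$, whence $x^{(k)} \to M$ as $n_k \to \infty$. An analysis of the ``immediate successor'' relation between $P_1$ and $P_2$ shows that the edges of $P_1$ strictly below their differing level must all be maximal and those of $P_2$ must all be minimal; so applying $\varphi_0$ to $x^{(k)}$ produces a path whose initial segment to level $n_k$ is the minimal path from $v_0$ to $v_k$, and hence agrees with the initial segment of $m$. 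Therefore $\varphi_0(x^{(k)}) \to m$; constructing $x^{*(k)}$ analogously with limit $m^{*}$ yields two sequences converging to $M$ whose $\varphi_0$-images converge to distinct minimal paths, precluding any continuous extension of $\varphi_0$. The main obstacle is the bookkeeping that ties the immediate-lex-successor structure of $(P_1, P_2)$ to the behaviour of $\varphi_0$ on paths built from them; Figure \ref{continuity_1} makes this transparent, but a careful case analysis on the level at which $P_1$ and $P_2$ first diverge is needed to make the equivalence rigorous.
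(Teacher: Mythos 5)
Your proof of $(2)\Rightarrow(1)$ is correct, and it is essentially the argument the authors have in mind (they omit the proof and point to Figure \ref{continuity_1}): from the lex-consecutive paths $P_1<P_2$ you correctly extract that $P_1$ is all-maximal and $P_2$ all-minimal below the topmost level where they differ, so your paths $x^{(k)}$ agree with $M$ up to level $n_k$ while $\varphi_0(x^{(k)})$ agrees with $m$ up to level $n_k$; together with the starred sequence and $m\neq m^*$ this rules out any continuous extension of $\varphi_0$ at $M$, hence any Vershik map. (Note that you must, as you implicitly do, read ``two minimal paths'' as \emph{distinct} paths; otherwise $(2)$ holds for every ordering.)

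The direction $(1)\Rightarrow(2)$ has a genuine gap at its first step. From $\om\notin\mathcal P_B$ you conclude that $\varphi_0$ fails to extend continuously at some $M\in X_{\max}(\om)$ and that, by finiteness of $X_{\min}(\om)$, you can extract $x_n,x_n^*\to M$ with $\varphi_0(x_n)\to m$, $\varphi_0(x_n^*)\to m^*$, $m\neq m^*$. Imperfection does not force this: the Vershik map can also fail to exist when $\varphi_0$ has a \emph{unique} limit at every maximal path but the induced map $X_{\max}(\om)\to X_{\min}(\om)$ is not a bijection, i.e.\ when every $Succ(x_{\max})$ of Proposition \ref{existenceVershikMap} is a singleton but some $Pred(y_{\min})$ is not. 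A concrete instance (cf.\ the constructions in the proof of Proposition \ref{Odometer}) is an order on a rank $2$ diagram in which, at every level, both minimal edges have source $u_n$ while the maximal edges into $u_{n+1}$ and $v_{n+1}$ have sources $u_n$ and $v_n$: there are two maximal paths and a single minimal path, so the order is imperfect, yet for each maximal path $M$ every limit of $\varphi_0(x_n)$ with $x_n\to M$ is that one minimal path, and no pair $m\neq m^*$ as in $(2)$ exists. So your extraction step is unjustified, and it cannot be repaired within the asymmetric formulation: one must either add the symmetric clause (one minimal path receiving two distinct maximal paths, i.e.\ the $Pred$ version of the same word condition, since $m\in Succ(M)$ iff $M\in Pred(m)$), or first restrict to $|X_{\max}(\om)|=|X_{\min}(\om)|$, where a counting argument (every minimal path lies in some $Succ(M)$ by compactness) shows that if no $Succ(M)$ contains two distinct minimal paths then the induced map is a bijection and $\om$ is perfect. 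As written, your argument silently assumes that imperfection is always witnessed on the maximal side, which is exactly the missing case.
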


\begin{figure}[h]
\centerline{\includegraphics[scale=0.9]{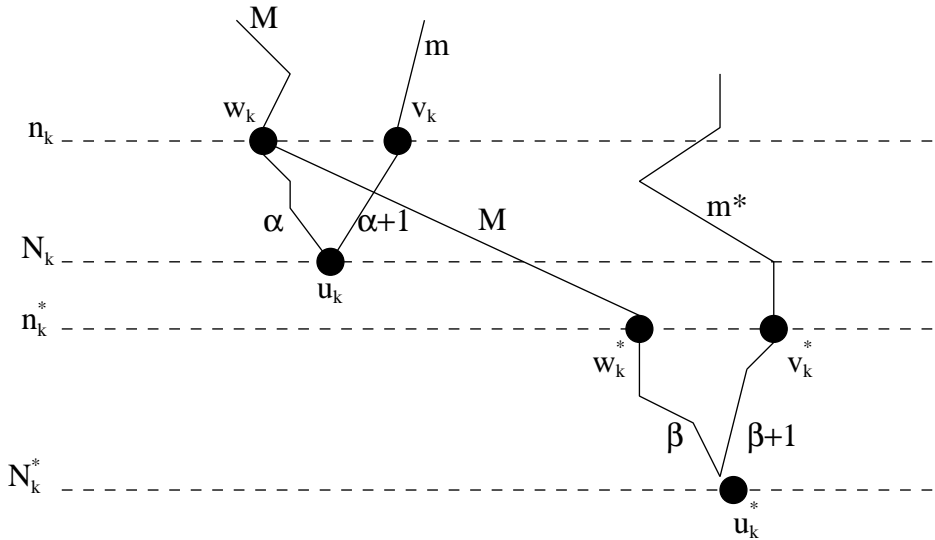}}
\caption{A discontinuous $\varphi_\om$.}
  \label{continuity_1}
\end{figure}

\begin{lemma}\label{good_orders_and_telescoping}
Let $B$ be a Bratteli diagram of finite rank and $B'$ a telescoping of
 $B$. Then an ordering $\omega \in \mathcal P_{B}$ if and only if the
 corresponding lexicographic ordering $\om' = L(\om)\in \mathcal P_{B'}$.
\end{lemma}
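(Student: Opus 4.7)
The plan is to show that, after the canonical identification of $X_B$ with $X_{B'}$, the partial Vershik maps $\varphi_{\om,0}$ and $\varphi_{\om',0}$ coincide, so that one extends continuously to all of $X_B$ precisely when the other does.

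First I would make the identification of the path spaces explicit. A path $(e_i)_{i\geq 1}\in X_B$ corresponds to the path $(e'_k)_{k\geq 1}\in X_{B'}$ where $e'_k$ is the edge of $B'$ represented by the finite path $(e_{n_{k-1}+1},\ldots,e_{n_k})$ in $B$. This is a bijection, and it is a homeomorphism: a cylinder set in $X_{B'}$ is a cylinder set in $X_B$, and any cylinder in $X_B$ at level $n$, with $n_{k-1}\leq n< n_k$, is a finite disjoint union of cylinders of $X_{B'}$ at level $k$. Second, I would record the fact (already used implicitly in the paper after Definition \ref{lexicographic_definition}) that under this identification
\[X_{\max}(\om)=X_{\max}(\om'),\qquad X_{\min}(\om)=X_{\min}(\om').\]
The key point here is that a finite path in $B$ between levels $n_{k-1}$ and $n_k$ is lex-maximal (resp.\ lex-minimal) with respect to $\om$ if and only if each of its edges is $\om$-maximal (resp.\ $\om$-minimal), which is exactly what it means for the corresponding edge of $B'$ to be $\om'$-maximal (resp.\ $\om'$-minimal).

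Next I would verify directly from Definition \ref{VershikMap}(iii) that $\varphi_{\om,0}=\varphi_{\om',0}$ on $X_B\setminus X_{\max}(\om)=X_{B'}\setminus X_{\max}(\om')$. Take $x\notin X_{\max}(\om)$ and let $k=\min\{n: x_n\text{ is not }\om\text{-maximal}\}$. Let $j$ be the unique index with $n_{j-1}<k\leq n_j$. By the preceding paragraph, $x'_1,\ldots,x'_{j-1}$ are all $\om'$-maximal and $x'_j$ is not $\om'$-maximal, so $j=\min\{n: x'_n \text{ is not } \om'\text{-maximal}\}$. By the definition of the lexicographic order, the $\om'$-successor $\overline{x'_j}$ of $x'_j$ is the finite path in $B$ obtained from $(x_{n_{j-1}+1},\ldots,x_{n_j})$ by replacing $x_k$ by its $\om$-successor $\overline{x_k}$ and replacing the (already maximal) prefix $(x_{n_{j-1}+1},\ldots,x_{k-1})$ by the minimal path from $V_{n_{j-1}}$ to $s(\overline{x_k})$. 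Concatenating with the minimal path from $v_0$ to $V_{n_{j-1}}$ (which is the image under identification of the minimal path of $B'$ from $v_0$ to $s(\overline{x'_j})$), one sees that $\varphi_{\om',0}(x)$ and $\varphi_{\om,0}(x)$ agree edge by edge.

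Having established $\varphi_{\om,0}=\varphi_{\om',0}$ as partial maps between the same open dense subsets of the same topological space, the conclusion is immediate: $\om$ is perfect iff $\varphi_{\om,0}$ extends to a homeomorphism of $X_B$ iff $\varphi_{\om',0}$ extends to a homeomorphism of $X_{B'}$ iff $\om'=L(\om)$ is perfect. The only real work is the bookkeeping in step three; there is no genuine obstacle, because telescoping preserves the tail equivalence relation and lexicographic ordering is designed precisely so that the successor operation commutes with the grouping of levels.
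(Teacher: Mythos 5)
Your proof is correct, but it follows a genuinely different route from the paper's. You identify $X_B$ with $X_{B'}$ via the canonical grouping homeomorphism, observe that under this identification $X_{\max}(\om)=X_{\max}(\om')$ and $X_{\min}(\om)=X_{\min}(\om')$ (since extremality of a finite path is equivalent to extremality of each of its edges), and then check by a direct successor computation that the partial maps $\varphi_{\om,0}$ and $\varphi_{\om',0}$ of Definition \ref{VershikMap}(iii) coincide, so the existence of a homeomorphic extension is literally the same question for $\om$ and for $\om'=L(\om)$. The paper instead argues contrapositively through Lemma \ref{ExistenceVershikMap_Part_2}: an imperfect $\om$ produces a maximal path $M$, two distinct minimal paths $m,m^*$, and infinitely many witnessing occurrences $w_k v_k\subset w(u_k,n_k,N_k)$, and the proof consists of transporting this witnessing data across the telescoping levels $(m_k)$ (and back, for the converse). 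Your argument is more elementary and in fact nowhere uses finite rank, whereas the paper's criterion in Lemma \ref{ExistenceVershikMap_Part_2} is formulated for finite rank diagrams; on the other hand, the paper's route keeps the word/language bookkeeping in the foreground, which is the machinery it reuses throughout Section \ref{language_skeleton} (e.g.\ in Theorem \ref{generic_two}, where the same witnessing-data formulation is invoked because telescoping is not available there). The one place where your write-up compresses a small but genuine verification is the claim that the $\om'$-successor of $x'_j$ is obtained by replacing $x_k$ by $\overline{x_k}$ and the maximal prefix by the minimal path into $s(\overline{x_k})$; this is exactly the standard successor computation for the lexicographic order and is easily justified, and likewise the concatenation of the two minimal segments is the minimal path in $E(v_0,s(\overline{x_k}))$ because all of its edges are minimal, so no gap results.
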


\begin{proof}
If $\om$ does not determine a Vershik map, then by Lemma \ref{ExistenceVershikMap_Part_2}, there is a maximal path $M$, two
distinct minimal paths $m$ and $m^{*}$, infinite sequences of levels
$(n_{k})$ and $(n_{k}^{*})$, $(N_{k})$ and  $(N_{k}^{*})$, vertices $\{w_{k}, \, v_{k}\}\subset V_{n_k}$, $\{w_{k}^{*},\, v_{k}^{*}\}\subset V_{n_{k}^{*}}$ and vertices $u_{k} \, \in
V_{N_{k}}$, $u_{k}^{*} \, \in V_{N_{k}^{*}}$ such that $M$ passes through
$w_{k}$ and $w_{k}^{*}$, $m$ ($m^{*}$) pass through $v_{k}$ ($v_{k}^{*}$), and $w_{k}v_{k}
\subset w(u_{k},n_{k},N_{k})$, $w_{k}^{*}v_{k}^{*} \subset
w(u_{k}^{*},n_{k}^{*},N_{k}^{*})$ (see Figure \ref{continuity_1}).  Note that in $B$, it cannot be the case
that for infinitely many levels, the minimal paths go through the same
vertex - otherwise they are not distinct. Thus, there is some $N$ such
that if $n\geq N$, the level $n$ edge in $m$ has a different source
and range from the level $n$ edge in $m^{*}$.

Let  $B'$ be   a telescoping of $B$ to levels $(m_k)$.   If the images of $M$, $m$ and $m^{*}$ in $B'$ are denoted by $M'$, $m'$ and $({m}^{*})'$ respectively, then by the comment above, apart from an initial segment,  the paths $m'$ and $({m}^{*})'$ pass through distinct vertices in $B'$.

Find the levels $m_{j}$ and $m_{J}$ in $(m_k)$ such that $m_{j-1}< n_{k} \leq m_{j}$, $m_{J-1}< N_{k} \leq m_{J}$, and let
  $E_{j}'$ denote the edge set in $B'$ obtained by telescoping  between  $m_{j-1}$-st and $m_{j}$-th levels of $B$, and  let $E_{J}'$ denote the edge set obtained by
telescoping between the $m_{J-1}$-st and $m_{J}$-th levels of $B$.
Let the path $M$ go through $w_{j}'\,\in V_{m_{j}}$, and $m$ through
$v_{j}'\,\in V_{m_{j}}$.

 Let $u_{J}'\, \in V_{m_{J}}$ be any
vertex such that there is a path from $u_{k}\,\in V_{N_{k}}$ to
$u_J'$. Then for the corresponding vertices $w_{j-1}',\,
v_{j-1}' \, \in V_{j-1}'$ and $u_{J}'\,\in V_{J}'$
respectively it is the case that $w_{j-1}'v_{j-1}'\,\in
w(u_{J}',j-1,J)$, with $M'$ passing through $w_{j-1}'$, and
$m'$ passing through $v_{j-1}'$.
Repeat this procedure
for $m^{*}$. 
By Lemma \ref{ExistenceVershikMap_Part_2}, the ordering $\om'$ on $B'$, obtained  from $\om$ by telescoping, does not determine a Vershik map.

The converse is proved similarly.
\end{proof}

Lemma \ref{good_orders_and_telescoping} and the compactness of $X_B$ imply the following corollary.

\begin{corollary}
  Suppose that $B$ has rank $d$. Then
$\om\in \mathcal P_{B}$ if and only if
 there exists a telescoping $(B',\om')$ of $(B,\om)$ such that $V_{n}' = V'$ for each $n\geq 1$,  the
  $\om'$-maximal  and $\om'$-minimal paths  $M_1,...,M_k$ and $m_1,...,m_{k'}$  are vertical,  and $\om'$ satisfies
the conditions of Proposition \ref{ExistenceVershikMap}.

\end{corollary}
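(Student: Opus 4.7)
The plan is to use Lemma \ref{good_orders_and_telescoping} to reduce the question of perfectness to a telescoped diagram, and Proposition \ref{ExistenceVershikMap} to characterize perfectness on that telescoped diagram. The only real work is the construction of a telescoping in which all extremal paths become vertical.

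The reverse implication is quick: given such a telescoping $(B',\om')$, Proposition \ref{ExistenceVershikMap} applied to $(B',\om')$ shows that $\om' \in \mathcal P_{B'}$. Since $\om'=L(\om)$, Lemma \ref{good_orders_and_telescoping} then gives $\om \in \mathcal P_B$.

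For the forward implication, assume $\om \in \mathcal P_B$. By Proposition 6.2 of \cite{bezuglyi_kwiatkowski_medynets:2009}, the sets $X_{\max}(\om)$ and $X_{\min}(\om)$ are finite; write them as $\{M_1,\ldots,M_p\}$ and $\{m_1,\ldots,m_q\}$ with $p,q \leq d$. First telescope $B$ to a diagram $B_1$ with $|V_n|=d$ at every level $n\geq 1$, which is possible because $B$ has rank $d$; by Lemma \ref{good_orders_and_telescoping} the lexicographic image $L(\om)$ still lies in $\mathcal P_{B_1}$. Next, fix an arbitrary bijection $\ell_n : V_n \to \{1,\ldots,d\}$ at each level of $B_1$ and, at level $n$, form the tuple
\[
\pi(n) \;=\; \bigl(\ell_n(r(M_i(n)))_{1\leq i \leq p},\, \ell_n(r(m_j(n)))_{1\leq j \leq q}\bigr) \;\in\; \{1,\ldots,d\}^{p+q}.
\]
Because the target is finite, the pigeonhole principle yields a fixed tuple $\pi^*$ and an infinite sequence of levels $(n_k)$ with $\pi(n_k)=\pi^*$ for all $k$. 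Telescope $B_1$ along $(n_k)$ to obtain $B'$, with $\om' := L(\om)$. The common identification of each $V_{n_k}$ with $\{1,\ldots,d\}$ via $\ell_{n_k}$ gives a single alphabet $V'$ for $B'$, and by construction each $M_i'$ and each $m_j'$ passes through the same symbol of $V'$ at every level of $B'$; that is, the extremal paths are vertical.

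Finally, Lemma \ref{good_orders_and_telescoping} applied to the composition of the two telescopings gives $\om' \in \mathcal P_{B'}$, so Proposition \ref{ExistenceVershikMap} supplies the permutation $\sigma$ and the matching $|X_{\max}(\om')|=|X_{\min}(\om')|$ required in its statement; $(B',\om')$ is then the desired telescoping. The main obstacle is the pigeonhole/relabeling step: one must realise that ``vertical'' is an identification-dependent notion across distinct levels, and use finite rank plus finiteness of the extremal sets to get a coherent labeling along a subsequence. Everything else is a direct application of the lemma and the proposition already proved.
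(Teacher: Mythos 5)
Your argument is correct and is essentially the paper's intended one: the paper derives this corollary directly from Lemma \ref{good_orders_and_telescoping} together with the (compactness/finiteness) observation that a further telescoping makes all extremal paths vertical, which is exactly what your pigeonhole-and-relabeling step supplies, after which Proposition \ref{ExistenceVershikMap} is applied in both directions just as you do.
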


Now we give another criterion which guarantees the existence of Vershik map on
an ordered Bratteli diagram $(B,\om)$ (not necessarily of finite
rank). Let $\om = (\om_v)_{v\in V^* \backslash V_0}$ be an ordering on a regular Bratteli diagram
$B $. For every  $x_{\max} = (x_n) \in X_{\max}(\om)$, we define
the set  $Succ(x_{\max}) \subset X_{\min}(\om)$ as follows: $y_{\min} =
(y_n)$ belongs to the set $Succ(x_{\max})$ if for infinitely many $n$
there exist edges $y' \in s^{-1}(r(x_n))$ and $y'' \in s^{-1}(r(y_n))$
such that $r(y') = r(y'') = v_{n+1}$ and  $y''$ is the successor of $y'$
in the set $r^{-1}(v_{n+1})$. Given a path $y_{\min} \in X_{\min}(\om)$,
we define the set $Pred(y_{\min})\subset X_{\max}(\om)$ in a similar way.
It is not hard to prove that the sets  $Succ(x_{\max})$
and $Pred(y_{\min})$ are non-empty and closed for any $x_{\max}$ and
$y_{\min}$.

\begin{proposition}\label{existenceVershikMap}
 An ordering $\om = (\om_v)_{v\in V^*\backslash V_0}$ on a regular Bratteli diagram $B$ is
perfect if and only if for every $x_{\max} \in X_{\max}
(\om)$ and $y_{\min} \in X_{\min}(\om)$ the sets $Succ(x_{\max})$ and
$Pred(y_{\min})$ are singletons.
\end{proposition}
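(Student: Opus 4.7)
The plan is to extract the combinatorial content of the convergence $z_n \to x_{\max}$ and $\varphi_0(z_n) \to y$ (when $z_n \notin X_{\max}(\om)$) in terms of the edges witnessing the $Succ$ and $Pred$ conditions.

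For $(\Rightarrow)$, assume $\varphi_\om$ is a homeomorphism and set $y^{*}=\varphi_\om(x_{\max})$. To show $y^{*} \in Succ(x_{\max})$, note that since $Succ(x_{\max})\neq\emptyset$ by the remark preceding the statement, the set of levels $\ell$ at which some edge in $s^{-1}(r(x_\ell))\cap E_{\ell+1}$ is non-maximal is infinite. For each such $\ell$, pick $z_\ell \in X_B\setminus X_{\max}(\om)$ agreeing with $x_{\max}$ on the first $\ell$ edges and whose $(\ell+1)$-st edge $e_\ell$ is non-maximal. Then $z_\ell\to x_{\max}$ and $\varphi_0(z_\ell)\to y^{*}$ by continuity. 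Writing $\bar e_\ell$ for the successor of $e_\ell$ in $r^{-1}(r(e_\ell))$, we have $e_\ell\in s^{-1}(r(x_\ell))$ and $\varphi_0(z_\ell)_{\ell+1}=\bar e_\ell$, while its first $\ell$ edges form the unique minimal path to $s(\bar e_\ell)$. Convergence of $\varphi_0(z_\ell)$ to the minimal path $y^{*}$, together with uniqueness of minimal paths to a prescribed vertex, forces $s(\bar e_\ell)=r(y^{*}_\ell)$, verifying the $Succ$ condition at infinitely many levels. For uniqueness, given $y\in Succ(x_{\max})$ with witnesses $(y'_n, y''_n)$ at infinitely many levels $n$, let $z_n$ agree with $x_{\max}$ on the first $n$ edges, take $y'_n$ at level $n+1$, and continue arbitrarily; then $z_n\to x_{\max}$ and $\varphi_0(z_n)$ begins with the minimal path to $s(y''_n)=r(y_n)$, so $\varphi_0(z_n)\to y$, and continuity of $\varphi_\om$ yields $y=y^{*}$. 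The $Pred$ statement is symmetric.

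For $(\Leftarrow)$, use the singleton hypothesis to define $\varphi_\om$ on $X_{\max}(\om)$ by sending $x_{\max}$ to the unique element of $Succ(x_{\max})$, extended by $\varphi_0$ elsewhere. The symmetry of the $Succ$ and $Pred$ definitions (exchanging $x\leftrightarrow y$ and $y'\leftrightarrow y''$) gives $y\in Succ(x)\Leftrightarrow x\in Pred(y)$, so $\varphi_\om\colon X_{\max}(\om)\to X_{\min}(\om)$ is a bijection with inverse governed by $Pred$. Continuity on $X_B\setminus X_{\max}(\om)$ is immediate from the local formula for $\varphi_0$. At $x_{\max}\in X_{\max}(\om)$, given $z_n\to x_{\max}$, reduce to subsequences either entirely outside $X_{\max}(\om)$ or entirely inside. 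In the outside case, any convergent sub-subsequence $\varphi_0(z_n)\to w$ satisfies $w\in Succ(x_{\max})$ by the edge-matching reasoning above, so $w=\varphi_\om(x_{\max})$, and compactness of $X_{\min}(\om)$ closes the case. In the inside case, for each $z_n\in X_{\max}(\om)$ select a non-maximal $\tilde z_n$ (possible by regularity) agreeing with $z_n$ on the first $n$ edges, and by applying the outside case to $z_n$ itself further insist that $\varphi_0(\tilde z_n)$ agrees with $\varphi_\om(z_n)$ on the first $n$ edges. Then $\tilde z_n\to x_{\max}$ gives $\varphi_0(\tilde z_n)\to \varphi_\om(x_{\max})$, and since $\varphi_0(\tilde z_n)$ and $\varphi_\om(z_n)$ share diverging initial segments, $\varphi_\om(z_n)\to \varphi_\om(x_{\max})$. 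Continuity of $\varphi_\om^{-1}$ follows symmetrically.

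The main obstacle is the edge-matching step in both directions: deducing $s(\bar e_\ell)=r(y^{*}_\ell)$ from the convergence $\varphi_0(z_\ell)\to y^{*}$, which rests on the uniqueness of the minimal path from $v_0$ to a prescribed vertex. A secondary difficulty is the diagonal choice of non-maximal approximants $\tilde z_n$ in the $X_{\max}(\om)$ subcase of $(\Leftarrow)$, where regularity of $B$ is essential to guarantee sufficiently many non-maximal paths nearby and to combine the two layers of convergence into a single uniform estimate.
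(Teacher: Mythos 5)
Your forward direction is essentially sound, but for a different reason than the one you foreground. The uniqueness argument is the correct core: from witnesses $(y'_n,y''_n)$ for an arbitrary $y\in Succ(x_{\max})$ you build $z_n$ agreeing with $x_{\max}$ on the first $n$ edges whose image $\varphi_0(z_n)$ agrees with $y$ on the first $n$ edges (uniqueness of minimal finite paths into $r(y_n)$), and continuity forces $y=\varphi_\om(x_{\max})$; combined with the non-emptiness asserted just before the statement, this already gives the singleton property, and the $Pred$ case is symmetric. By contrast, your separate membership step is not justified: convergence $\varphi_0(z_\ell)\to y^{*}$ only yields agreement on initial segments whose length is not tied to $\ell$, so it does not force $s(\bar e_\ell)=r(y^{*}_\ell)$ at level $\ell$ itself; indeed for an unlucky choice of the non-maximal edge $e_\ell$ this equality can simply fail. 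Fortunately that step is redundant.

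The genuine gap is in the converse, exactly at the continuity check that the paper waves off with ``can be checked directly.'' Your claim that every cluster value $w$ of $\varphi_0(z_n)$, for $z_n\to x_{\max}$, lies in $Succ(x_{\max})$ ``by the edge-matching reasoning above'' does not follow. The first non-maximal edge of $z_n$ sits at a level $k_n+1$ that is in general strictly larger than the level up to which $z_n$ agrees with $x_{\max}$; its source is the endpoint of a maximal finite path that follows $x_{\max}$ for a while and may then leave it, and the source of the successor edge is joined to $w$ only by a minimal finite path. The definition of $Succ(x_{\max})$, however, demands a consecutive pair of edges emanating from the vertices $r(x_n)$ and $r(w_n)$ themselves at a common level, so your construction produces no witness at the levels of $x_{\max}$ and $w$. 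This is not a technicality that a diagonal argument fixes: consider the stationary rank~3 diagram with $V=\{a,b,c\}$ where the orders on $r^{-1}(a)$, $r^{-1}(b)$, $r^{-1}(c)$, read by sources, are $aaca$, $bb$, $bcca$. The extremal paths are vertical through $a$ and $b$, the source pairs $ab$ and $ba$ never occur consecutively inside a single $r^{-1}(v)$, so every single-edge set $Succ$ and $Pred$ is a singleton; yet a path following $M_a$ for $n$ levels, then taking the maximal edge into $c$ and then the second edge of $r^{-1}(c)$, is sent by $\varphi_0$ to a path agreeing with $m_b$ for $n$ levels, so $\varphi_0$ clusters at both $m_a$ and $m_b$ near $M_a$ and no continuous extension exists. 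Thus the continuity step cannot be closed with the single-edge adjacency your argument (and your statement of the $Succ$ condition) uses; one has to work with consecutive \emph{finite paths} between levels, i.e.\ adjacency that survives telescoping, which is precisely the form of the condition in Lemma \ref{ExistenceVershikMap_Part_2}, and then redo your limit argument at that level of generality. The same defect propagates to your ``inside case,'' which is built on the outside case.
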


\begin{proof}  Let $x_{\max}$ be any path from $X_{\max}(\om)$.  If  $Succ(x_{\max})= \{y_{\min}\}$,
then one can define $\varphi_{\om} : x_{\max} \to y_{\min}$. Since $Pred(y_{\min})$ is also a singleton, we obtain a one-to-one correspondence between the sets of maximal and minimal paths.
The fact that $\varphi_{\om}$ is continuous can be checked directly.

Conversely, if $\om$ is perfect, then it follows from the existence of the Vershik map $\varphi_{\om}$ that
either of the sets $Succ(x_{\max})$ and $Pred(y_{\min})$ must be singletons.
\end{proof}


\subsection{Skeletons and associated graphs}\label{skeleton_definition}
Let $B$ be a finite rank Bratteli diagram. We do not need to assume here that $B$ is simple unless
we state this explicitly.
If $\om$ is an order on $B$, and $v\in V^*\backslash V_0  $, we denote the  minimal edge with range $v$ by  $\ol e_v$ , and  we denote the maximal edge with range $v$ by $\wt e_v$.

\begin{lemma}\label{well_telescoping_lemma}
Let $(B',\om')$ be a  rank $d$ ordered diagram. Then there exists a telescoping $(B,\om)$ of $(B',\om')$ such that
\begin{enumerate}
\item $|r^{-1}(v)| \geq 2$ for each $v\in V^*\backslash V_0 $,
\item $V_n=V$ for each $n\geq 1$ and $|V|=d$,
\item all $\om$-extremal paths are vertical, with $\wt V$,  $\ol V$ denoting the sets of vertices through which maximal and minimal paths run respectively, and
\item $s(\wt e_v)\in \wt V$ and $s(\ol e_v) \in \ol V$ for each $v\in V^*\backslash (V_0 \cup V_1 )$, and this is independent of $n$.
\end{enumerate}

In addition, if $\om \in \mathcal P_B$, we can further telescope so that

\begin{enumerate}
\setcounter{enumi}{4}
\item if $\wt v\ol v $ appears as a subword of some $w(v,m,n)$ with $m\geq 1$, then , then $\sigma(\wt v) = \ol v$ defines a one-to-one correspondence between the sets $\wt V$ and $\ol V$.
\end{enumerate}
\end{lemma}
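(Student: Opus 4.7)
The approach is to telescope in stages: first stabilize the combinatorial shape of the diagram, then use an infinite Ramsey argument to force the maximal and minimal ``source-of-extremal-edge'' self-maps of the vertex set to be \emph{idempotent}, from which (3) and (4) will follow mechanically. Using Lemma \ref{aperiodic-property} together with the rank $d$ hypothesis, I would first telescope $B'$ so that $|r^{-1}(v)|\geq 2$ at every non-source vertex and $|V_n|=d$ for each $n\geq 1$; relabeling yields $V_n=V$ with $|V|=d$, giving (1) and (2). For $n\geq 1$ define $\pi_n, \rho_n \colon V \to V$ by $\pi_n(v)=s(\wt e_v)$ and $\rho_n(v)=s(\ol e_v)$ for $v\in V_{n+1}$. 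A further telescoping to levels $i<j$ produces source functions
\[
\Pi_{i,j} := \pi_i \circ \pi_{i+1} \circ \cdots \circ \pi_{j-1}, \qquad \Psi_{i,j} := \rho_i \circ \rho_{i+1} \circ \cdots \circ \rho_{j-1},
\]
which satisfy the cocycle identities $\Pi_{i,k} = \Pi_{i,j}\circ\Pi_{j,k}$ and the analogous one for $\Psi$.

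Next I would color the edge $\{i,j\}$ of the complete graph on $\mathbb{N}$ by the pair $(\Pi_{i,j},\Psi_{i,j})\in (V^V)^2$, a finite palette. The infinite Ramsey theorem then yields an infinite monochromatic subsequence $n_0<n_1<n_2<\cdots$ on which $(\Pi_{n_i,n_j},\Psi_{n_i,n_j})=(\Pi,\Psi)$ for every $i<j$. Applying the cocycle identity to $n_0<n_1<n_2$ forces $\Pi=\Pi\circ\Pi$ and $\Psi=\Psi\circ\Psi$, so both maps are idempotent. Telescope $(B',\om')$ to $(n_k)$ to obtain $(B,\om)$, with extremal source functions the constant maps $\Pi$ and $\Psi$ at every level. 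Set $\wt V:=\Pi(V)=\mathrm{Fix}(\Pi)$ and $\ol V:=\Psi(V)=\mathrm{Fix}(\Psi)$.

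Any infinite maximal path $(v_n)$ in $B$ then satisfies $v_n=\Pi(v_{n+1})$, hence $v_n\in\Pi(V)=\mathrm{Fix}(\Pi)$ for $n\geq 1$, and for $n\geq 2$ we get $v_{n-1}=\Pi(v_n)=v_n$; each maximal path is therefore vertical through a vertex of $\wt V$, and conversely every $v\in\wt V$ produces such a path, yielding (3). For (4), $s(\wt e_v)=\Pi(v)\in\Pi(V)=\wt V$ at every level. The minimal side is identical. If moreover $\om\in\mathcal P_B$, Proposition \ref{ExistenceVershikMap} supplies a bijection $\sigma\colon\wt V\to\ol V$ with $\wt v\,\ol v\in\mathcal L_{B,\om}$ iff $\sigma(\wt v)=\ol v$; a final telescoping, justified by Remark \ref{telescoping_assumption}, pushes the threshold for this characterization down to $m\geq 1$, establishing (5). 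Idempotence of $\Pi$ and $\Psi$ ensures that this final telescoping leaves $\Pi,\Psi,\wt V,\ol V$ unchanged and so preserves (1)--(4).

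The principal obstacle is the Ramsey step: a naive pigeonhole on the sequence $(\pi_n,\rho_n)$ would not suffice, because telescoping to a subsequence replaces the original functions by compositions and so need not produce constancy, let alone the idempotence that is the decisive structural feature collapsing every extremal path to a vertical one and confining every extremal edge source into $\wt V$ or $\ol V$. Combining the infinite monochromatic subsequence with the cocycle identity is what converts ``finitely many possible telescoped source functions'' into ``an idempotent telescoped source function,'' which is precisely what conditions (3) and (4) demand.
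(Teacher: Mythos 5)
Your proof is correct, but it reaches conditions (3) and (4) by a genuinely different route than the paper. The paper's argument is path-based: it uses the finiteness of the set of extremal paths on a finite rank diagram to find, for each of the finitely many maximal and minimal paths, a vertex it visits infinitely often, telescopes so these paths become vertical (giving (3)), and then proves by a compactness/contradiction argument that for high enough levels every finite extremal path into any vertex agrees with one of the infinite extremal paths on a long initial segment, which after two further telescopings yields (4). Your argument is instead algebraic: you encode the extremal-edge data in the source maps $\pi_n,\rho_n\colon V\to V$, observe that telescoping composes them (the cocycle identity), and use infinite Ramsey plus that identity to force the telescoped source maps to be a single idempotent pair $(\Pi,\Psi)$; verticality of all extremal paths and the containment $s(\wt e_v)\in\wt V$, $s(\ol e_v)\in\ol V$ with level-independence then fall out simultaneously from $\Pi\circ\Pi=\Pi$, $\Psi\circ\Psi=\Psi$ and $\wt V=\Pi(V)=\mathrm{Fix}(\Pi)$, $\ol V=\Psi(V)=\mathrm{Fix}(\Psi)$. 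What your approach buys: it does not presuppose the finiteness of $X_{\max}(\om)$ (indeed it re-derives the bound of at most $d$ extremal paths, all vertical, as a byproduct), it handles (3) and (4) in one stroke, and idempotence gives for free that the final telescoping needed for (5) — which you, like the paper, obtain from Proposition \ref{ExistenceVershikMap} and Remark \ref{telescoping_assumption} — preserves (1)--(4). What the paper's approach buys is elementary bookkeeping with no appeal to Ramsey's theorem, and a more explicit picture of how the finite extremal paths shadow the infinite ones, which is reused in spirit elsewhere in the paper. One minor point of care in your write-up: obtain (2) before invoking Lemma \ref{aperiodic-property} (or telescope first to the levels with $|V_n|=d$ and then apply that lemma), since an arbitrary subsequence giving $|r^{-1}(v)|\geq 2$ need not contain infinitely many levels with exactly $d$ vertices; this is the same (harmless) ordering issue implicit in the paper's own first two sentences.
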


\begin{proof} Property (1) is guaranteed by Lemma \ref{aperiodic-property}. To obtain property (2), we telescope through the levels $(n_k)$ such that  $|V_{n_k}|=d$, where $d$ is the rank of $B'$.  To obtain (3), note that each maximal path $M'$ passes through one vertex $\wt v_M$ infinitely often. Telescope $B$ to  the levels where this occurs; the image $M$ of $M'$ is then a maximal vertical path passing though $\wt v_M$ at each level. Repeat this procedure for each maximal path $M'$ and each minimal path $m'$. To see (4), we assume we have telescoped so that properties (1) - (3) hold. We denote the vertical maximal path passing through $\wt v\in \wt V$ by
$M_{\wt v}$, similarly the vertical minimal path $m_{\ol v} $ passes through $\ol v$.
We claim the following: for any level $n$ there exist $l_n > n$ such that for every $l \geq l_n$ and every vertex $u \in V_l$ , the maximal and minimal finite paths in $E(v_0, u)$ agree with some $M_{\wt v}$, $m_{\ol v}$ respectively on the first $n$ entries,
where the  vertices $\wt v \in \wt V$ and $\ol v \in \ol V$ depend on $u$ and $l$.
Indeed, if we assumed that the contrary holds, then we would have additional maximal (or minimal) paths not belonging to $\{M_{\wt v}: \wt v \in \wt V \}$ (or
$\{m_{\ol v}: \ol v \in \ol V \}$).
Thus, after an appropriate telescoping, we can assume that if $v$ is any vertex in $V_n,\ n\geq 2$, and  $\wt e_v$ and $\ol e_v$ are the maximal and minimal edges in the set $r^{-1}(v)$ with respect to $\om$, then $\wt e_v \neq \ol e_v$ and  $s(\wt e_v) \in \wt V_{n-1}$,  $s(\ol e_v) \in \ol V_{n-1}$. By further telescoping we can assume that the sources of $\wt e_v$ and $\ol e_v$ do not depend on the level in which $v$ lies.  If $\om$ is perfect, Remark \ref{telescoping_assumption} explains why it is possible to telescope $(B,\om)$ so that (5) is true.
\end{proof}

\begin{definition}\label{well_telescoped}
Let $B$ be a finite rank $d$ Bratteli diagram.
\begin{enumerate}
\item
If $B$  satisfies the  conditions (1) - (2) of Lemma \ref{well_telescoping_lemma}, we say that $B$ is {\em strictly rank $d$}.
\item
If $(B,\om)$ satisfies  conditions (1) - (4) of Lemma \ref{well_telescoping_lemma},  or if  $(B,\om)$ is a finite rank perfectly  ordered diagram satisfying conditions (1) - (5) of Lemma \ref{well_telescoping_lemma},we say that $(B,\om)$ is {\em well-telescoped.}
\end{enumerate}

\end{definition}

{\em For the remainder of Section \ref{language_skeleton}, we assume that unordered finite rank $d$ Bratteli diagrams  are strictly rank $d$. We assume that finite rank ordered Bratteli diagrams are well-telescoped.  }

\vspace{1em}

Thus, any ordering $\om$ determines a collection $\{M_{\wt v}, m_{\ol
  v}, \wt e_w, \ol e_w: w\in V^*\backslash V_{0},\,\, \wt v \in \wt
 V \mbox{ and } \ol v \in \ol V \}$. This collection of paths and edges
contains all information about the extremal edges of $\om$, though only partial information about $\om$ itself.  We  now extend this notion to an  unordered diagram $B$.

Let  $B$ be a strictly rank $d$ Bratteli diagram.
  We denote by $V$ the set of vertices of $B$ at each level $n\geq 1$, but
if we need to point out that this set is considered at level $n$,
then we write $V_n$ instead of $V$. For  some $k \leq d$, take two subsets
$\wt V$ and $\ol V$ of $V$ such that $|\wt V| = |\ol V| = k$.
Given any $\wt v \in \wt V$, $\ol v \in \ol V$ choose
$M_{\wt v} = (M_{\wt v}(1),..., M_{\wt v}(n),...)$ and $m_{\ol v} =
(m_{\ol v}(1),..., m_{\ol v}(n),É)$, two vertical paths in $B$
going downwards through the vertices $\wt v \in \wt V$ and $\ol v \in
\ol V$. If $v \in \ol V \cap \wt V$, then the paths $M_{v}$ and $m_{v} $
are taken such that they do not share common edges.
  Next, for each vertex $w\in V_n,  n \geq 2$, we choose two vertices $\wt v $ and $\ol v$ in $\wt V$ and $\ol V$ respectively, and for each $n\geq 2 $ and each $w \in V_n$,  distinct edges
$\wt e_w$ and $\ol e_w$ with range $w$ such that $s(\wt e_w) = \wt v$ and
$s(\ol e_w) = \ol v$ . If $w \in \wt V$ or $w \in \ol V$, then the edges $\wt e_w$ and $\ol e_w$ in $E_n$  are chosen such that $\wt e_w= M_w(n) $ and  $\ol
e_w= m_w(n)$, respectively.  We introduce the concept of a \textit{skeleton}  to create a framework for defining a perfect ordering with precisely this extremal edge structure.

\begin{definition}\label{skeleton}
 Given a  strict rank $d$ diagram $B$ and two subsets $\wt V, \ol V$ of $V$ of the
 same cardinality $k\leq d$,  a  \textit{skeleton} $\mathcal F = \mathcal F(B)$
 of $B$ is a collection $\{M_{\wt v}, m_{\ol v}, \wt e_w, \ol e_w: w \in V^*\backslash (V_0 \cup V_1 ),\,\, \wt v \in \wt V\mbox{  and } \ol v \in \ol V  \}$
 of paths and edges with the properties described above.
The vertices from $\wt V$ will be called \textit{maximal} and those from $\ol V$ \textit{minimal}.
\end{definition}

In other words, while not an ordering, a skeleton is a constrained choice of all extremal edges. As an example,
 when $\wt V = \ol V = V$, the skeleton is simply the set $\{M_{\wt
   v}, m_{\ol v}: \wt v, \ol v \, \in V\}$.
 As discussed in Lemma \ref{well_telescoping_lemma}, any well telescoped ordered finite rank  Bratteli diagram $(B, \om)$ has a natural skeleton $\mathcal F_\om$  (recall that the extremal paths are vertical).
  Conversely, it is obvious that there are several skeletons that one can define on $B$, and  for any skeleton $\mathcal F$ of a Bratteli diagram $B$ there is at least one ordering $\om$ on $B$ such that $\mathcal F = \mathcal F_\om$. A skeleton $\mathcal F_{\om}$ contains no information about whether $\om\in
\mathcal P_{B}$. Note that a skeleton does not contain information about which are the maximal edges in $E_1$; this will not impact our work.

\medskip
 Next we define a directed graph $\mathcal H = (T,
P)$ associated to a Bratteli diagram $B$ of strict finite rank and having
skeleton $\mathcal F$.  Implicit in the definition of this
directed graph is the assumption that we are working towards
 constructing perfect orderings $\om$ whose  skeleton
 $ \mathcal F_{\om}=\mathcal F $.
 Thus  we suppose that we
also have a bijection  $\sigma : \wt V \to \ol V$ that,  in the case when
$\mathcal F = \mathcal F_{\om}$ with $\om \in \mathcal P_{B}$,
 will be the bijection described in Proposition \ref{ExistenceVershikMap}, so that
 $\varphi_\om(M_{\wt v}) = m_{\sigma(\wt v)}$.

\begin{definition}
For any vertices $\wt v \in
\wt V$ and $\ol v \in \ol V$, we set
\begin{equation}\label{W} W_{\wt v} = \{w \in V : s(\wt e_w) =
\wt v\}, \ \ W'_{\ol v} = \{w \in V : s(\ol e_w) =
\ol v\}.
\end{equation}
 Then $W = \{W_{\wt v} : \wt v \in \wt V\}$ and $W' = \{W_{\wt v}' : \ol v \in \ol V\}$ are both partitions of $V$.  We call $W$ and $W'$ the {\em partitions generated by $\mathcal F$.}
 \end{definition}
 Let $[\ol v, \wt v]:= W'_{\ol v} \cap W_{\wt v}$, and
define the partition \[W \cap W' := \{[\ol v,\wt v] : \ol v\in \ol V, \wt v \in \wt V\} .\]

\begin{definition}\label{associated_graph}
Let $B$ be a strict finite rank diagram,  \[\mathcal F = \{M_{\wt v},
m_{\ol v}, \wt e_w, \ol e_w: w\in V^*\backslash (V_{0}\cup V_1),\,\, \wt v
\in \wt V\mbox{ and } \ol v \in \ol V\}\] be a skeleton on $B$,
and suppose
 $\sigma :\wt V \rightarrow \ol V$ is a bijection.
   Let the graph $\mathcal H = \mathcal H(T,P)$, have vertex set
 $$ T =
\{[\ol v, \wt v] \in \ol V\times \wt V : [\ol v, \wt v]\neq
\emptyset\},  $$  and edge set $P$, where there is an edge from  $[\ol v, \wt v]$ to $[\ol v_1, \wt v_1]$
 if and only if  $\sigma(\wt v) = \ol v_1$.
The
directed graph  $\mathcal H$  is called the  \textit{graph associated to  $(B, \mathcal F, \sigma)$}.
\end{definition}

 Note that for a fixed skeleton, different bijections $\sigma$ will define different graphs $\mathcal H$.

\begin{remark} Suppose $(B, \om)$ is a perfectly
 ordered,  well telescoped finite rank Bratteli diagram, $\mathcal F_\om$ is the
 skeleton on $B$ defined by $\om$ and $\sigma$ is  the bijection given by Proposition \ref{ExistenceVershikMap}.
Let $\mathcal H= (T,P)$ be the graph associated to $(B, \mathcal F, \sigma)$.  Let $w =
 v_1\cdots v_p$ be a word in the language $\mathcal L_{B,\om}$ and   suppose
 $v_i \in t_i$ where $t_i \in T$. Then there exists a path in $\mathcal
 H$ starting at $t_1$ and ending at $t_p$.  Moreover, the following is also true; the proof is
 straightforward and is omitted.
\end{remark}

\begin{lemma}\label{V-H-correspondence}
Let $B$ be an aperiodic, strict  finite rank Bratteli diagram, let
 $\mathcal F$ be  a skeleton on $B$, $\sigma:\wt V\rightarrow \ol V$
 be a bijection, and  let $\mathcal H=(T,P)$ be the
 associated  graph to $(B, \mathcal F,\sigma)$.
Suppose there exists an ordering $\om$
 on $B$ with skeleton $\mathcal F$,  and there is an $M$ such that
 whenever  $N>n\geq M$,   if a  word $w=v_{1}\ldots
 v_{p} \subset w(v,n,N)$ for  $v \in V_N$, then  $w$ corresponds to a path in $\mathcal H$
 going through vertices $t_{1}, \ldots t_{p}$, where  $v_{i}\in V_{n}$ belong to $t_{i}\in T$.
Then $\om$ is perfect and
 $\varphi_{\om} (M_{\wt v}) = m_{\sigma(\wt v)}$ for each $\wt v \in \wt V$.
\end{lemma}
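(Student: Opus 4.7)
The plan is to apply Proposition \ref{existenceVershikMap}, which (since finite rank diagrams are regular) reduces the perfectness of $\om$ to showing that $Succ(x_{\max})$ and $Pred(y_{\min})$ are singletons for every $x_{\max}\in X_{\max}(\om)$ and $y_{\min}\in X_{\min}(\om)$. Because $(B,\om)$ is well-telescoped, every extremal path is vertical, $X_{\max}(\om)=\{M_{\wt v}:\wt v\in\wt V\}$ and $X_{\min}(\om)=\{m_{\ol v}:\ol v\in\ol V\}$, so the task reduces to showing $Succ(M_{\wt v})=\{m_{\sigma(\wt v)}\}$ for every $\wt v\in\wt V$, and the analogous statement for $Pred$.

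First I would unpack the definition of $Succ$ using verticality: $m_{\ol v}\in Succ(M_{\wt v})$ is equivalent to the assertion that for infinitely many $n$, the length-two word $\wt v\,\ol v$ appears as two consecutive letters in $w(v_{n+1},n,n+1)$ for some $v_{n+1}\in V_{n+1}$. Because $M$ is a fixed constant, infinitely many such $n$ satisfy $n\geq M$, so the hypothesis of the lemma applies to at least one (in fact, infinitely many) of them: the pair $\wt v\,\ol v$ must correspond to a path in $\mathcal H$ through the unique element $t_1\in T$ containing $\wt v$, followed by the unique $t_2\in T$ containing $\ol v$.

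Next I would identify these two vertices of $T$ explicitly. Since $W$ and $W'$ are partitions of $V$, $\wt v$ lies only in $W_{\wt v}\cap W'_{s(\ol e_{\wt v})}$, so $t_1=[s(\ol e_{\wt v}),\wt v]$; similarly $t_2=[\ol v,s(\wt e_{\ol v})]$. By Definition \ref{associated_graph}, an edge from $t_1$ to $t_2$ exists precisely when $\sigma(\wt v)=\ol v$, giving $Succ(M_{\wt v})\subseteq\{m_{\sigma(\wt v)}\}$. The paper notes immediately before Proposition \ref{existenceVershikMap} that $Succ(M_{\wt v})$ is nonempty, so equality follows. A symmetric argument, reading the word $\wt v\,\ol v$ as coming from a predecessor rather than a successor, yields $Pred(m_{\ol v})=\{M_{\sigma^{-1}(\ol v)}\}$.

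Proposition \ref{existenceVershikMap} then declares that $\om$ is perfect, and the map constructed in its proof sends each maximal path to the unique element of its $Succ$-set, delivering $\varphi_\om(M_{\wt v})=m_{\sigma(\wt v)}$ as required. No serious obstacle arises; the only subtlety is that the hypothesis controls $\mathcal H$-paths only for $n\geq M$, which is handled by the elementary observation that an infinite set of witness levels must contain infinitely many elements above any fixed threshold.
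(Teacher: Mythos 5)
Your proof is correct, and since the paper declares the proof of Lemma \ref{V-H-correspondence} ``straightforward'' and omits it, there is no official argument to compare against; the surrounding text suggests the intended route is the language criterion of Proposition \ref{ExistenceVershikMap} (two-letter words $\wt v\,\ol v$ force $\ol v=\sigma(\wt v)$), whereas you route everything through the $Succ$/$Pred$ criterion of Proposition \ref{existenceVershikMap}. The content is essentially the same, but your packaging buys a small convenience: applying Proposition \ref{ExistenceVershikMap} literally would also require the ``if'' half of its condition (2), i.e.\ that $v_{M_i}v_{m_{\sigma(i)}}$ actually occurs in $\mathcal L_{B,\om}$, which needs a separate existence argument; the nonemptiness of $Succ(x_{\max})$ and $Pred(y_{\min})$, already recorded in the paper before Proposition \ref{existenceVershikMap}, gives you exactly that for free. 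Your individual steps check out: since $\mathcal F=\mathcal F_\om$, the extremal paths agree with the vertical paths $M_{\wt v}$, $m_{\ol v}$ from level $2$ on (note this follows from the skeleton hypothesis itself, so your appeal to well-telescoping is not actually needed); $m_{\ol v}\in Succ(M_{\wt v})$ is precisely the occurrence, for infinitely many $n$ (hence infinitely many $n\geq M$), of $\wt v\,\ol v$ as consecutive letters of some $w(u,n,n+1)$; the unique elements of $T$ containing $\wt v$ and $\ol v$ are $[s(\ol e_{\wt v}),\wt v]$ and $[\ol v,s(\wt e_{\ol v})]$ because the skeleton places $\wt e_{\wt v}$ and $\ol e_{\ol v}$ on the vertical paths, so $s(\wt e_{\wt v})=\wt v$ and $s(\ol e_{\ol v})=\ol v$; and Definition \ref{associated_graph} then forces $\ol v=\sigma(\wt v)$. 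Combined with regularity of finite rank diagrams and the uniqueness of the Vershik map on regular diagrams, Proposition \ref{existenceVershikMap} yields both perfectness and $\varphi_\om(M_{\wt v})=m_{\sigma(\wt v)}$, as required.
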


\begin{definition}\label{aperiodic_definition}
We define the family $\mathcal A$ of Bratteli diagrams, all of whose incidence matrices are of the form
\[F_{n}:=
\left(
\begin{array}{ccccc}
A_{n}^{(1)} & 0 & \ldots  &  0 & 0 \\
0 & A_{n}^{(2)} & \ldots  & 0 & 0 \\
\vdots & \vdots  & \ddots & \vdots & \vdots     \\
0 & 0 & \ldots  & A_{n}^{(k)}  & 0 \\
B_{n}^{(1)}& B_{n}^{(2)}  & \ldots  & B_{n}^{(k)} & C_{n}  \\
 \end{array}
\right), \ \ n\geq 1,
\]
where
\begin{enumerate}
\item
for $1\leq i \leq k$ there is some $d_{i}$ such that for each $n\geq 1$,
$A_{n}^{(i)}$ is a $d_{i}\times d_{i}$ matrix,
\item all  matrices $A_{n}^{(i)}$, $B_{n}^{(i)}$ and $C_{n}$ are strictly
 positive,
\item $C_{n}$ is a $d\times d$ matrix,
\item there exists $j\in \{\sum_{i=1}^{k}d_{i} +1, \ldots \sum_{i=1}^{k}d_{i} +d\}$ such that for each $n\geq 1$, the $j$-th row of $F_{n}$ is strictly positive.
\end{enumerate}
If a Bratteli diagram's incidence matrices are of the form above, we shall say that it has $k$ {\em minimal components}.
\end{definition}

As shown in \cite{bezuglyi_kwiatkowski_medynets_solomyak:2011}, the family  $\mathcal A$ of diagrams corresponds to aperiodic homeomorphisms of a Cantor set that have exactly $k$ minimal components with respect to the tail equivalence relation $\mathcal E$.

Recall that a directed graph is {\em strongly connected} if for any two
vertices $v$, $v'$, there is a  path from $v$ to $v'$, and also  a path from $v'$
to $v$. If at least one of these paths exist, then $G$ is {\em weakly
connected}, or just {\em connected}. We notice that, given $(B, \mathcal F, \sigma)$,  an associated graph $\mathcal H =(T,P)$ is not connected, in general.

\begin{proposition}\label{connectedness and goodness}
 Let $(B,\om)$ be a finite rank,  perfectly ordered and well telescoped  Bratteli diagram, and suppose $\om$ has skeleton $\mathcal F_\om$ and permutation $\sigma$.
\begin{enumerate}
\item If $B$ is simple, then the associated graph $\mathcal H$ is
strongly connected.
\item If $B \in \mathcal A$, then the associated graph $\mathcal H$ is
weakly connected.
\end{enumerate}
\end{proposition}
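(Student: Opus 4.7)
The plan is to exploit the word-to-path correspondence recorded in the remark immediately preceding Lemma~\ref{V-H-correspondence}: for a perfectly ordered, well-telescoped $(B,\om)$, any word $v_{1}\cdots v_{p}\in\mathcal{L}_{B,\om}$ yields a directed path from $t(v_{1})$ to $t(v_{p})$ in $\mathcal{H}$, where $t(v)\in T$ is the unique element containing $v$. Since $W$ and $W'$ partition $V$, every element of $T$ has the form $t(v)$ for some $v\in V$, so connectedness questions for $\mathcal{H}$ reduce to statements about which letters of $V$ can be made to appear, and in which order, inside the finite words $w(v,m,n)$.

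For part~(2), assume $B\in\mathcal{A}$ with blocks $V^{(0)}, V^{(1)}, \ldots, V^{(k)}$, and let $v^{*}\in V^{(0)}$ be the vertex whose row in each $F_{n}$ is strictly positive. Strict positivity of this row, together with strict positivity of $A_{n}^{(i)}$ and $C_{n}$, forces every $v\in V$ to admit, for all sufficiently large $n$, at least one finite path from $v$ at level~$1$ to $v^{*}$ at level~$n$; consequently every letter of $V$ occurs in $w(v^{*},1,n)$. Given any two $T$-vertices $t, t'$ with representatives $v,v'\in V$, both letters appear in $w(v^{*},1,n)$ for all large $n$, and whichever appears first provides, via the correspondence, a directed path in $\mathcal{H}$ connecting $t$ and $t'$. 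Hence every pair $t, t'\in T$ lies in a common weakly connected component, and $\mathcal{H}$ is weakly connected.

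For part~(1), assume $B$ is simple. After a further telescoping that leaves $\wt V$, $\ol V$, $\sigma$, and hence $\mathcal{H}$ unchanged, I may assume all incidence matrix entries are strictly positive, so that every $v'\in V$ appears in $w(v,1,n)$ with multiplicity $|E(v',v)|\to\infty$. Given $t_{1}, t_{2}\in T$ with representatives $v_{1}\in t_{1}$ and $v_{2}\in t_{2}$, applying the argument of part~(2) (with any $v$ in place of $v^{*}$) yields a directed path from one of $t_{1}, t_{2}$ to the other. For the reverse direction I would invoke the minimality of $(X_{B},\varphi_{\om})$, which holds because $B$ is simple: since the $\varphi_{\om}$-orbit of every maximal path is dense in $X_{B}$, both $v_{1}$-before-$v_{2}$ and $v_{2}$-before-$v_{1}$ configurations must occur in suitable words $w(v,1,n)$. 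The word-to-path correspondence applied to each type of configuration gives directed paths in $\mathcal{H}$ in both directions, so $\mathcal{H}$ is strongly connected.

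The principal obstacle is the reverse-direction step in part~(1), extracting from minimality of $(X_{B},\varphi_{\om})$ the coexistence of both orderings of $v_{1}$ and $v_{2}$ inside words $w(v,1,n)$. The natural tactic is to identify the sequence of level-$1$ sources encountered along the $\varphi_{\om}$-orbit of a maximal path with the letters of $w(v,1,n)$, and then deploy density of that orbit to forbid the pathological segregation of $v_{1}$-positions from $v_{2}$-positions in every such word. The parallel verification in part~(2) is routine: products of nonnegative matrices one of whose rows is strictly positive retain a strictly positive row, so the telescoped incidence matrices still exhibit a hub vertex with the required universal reachability.
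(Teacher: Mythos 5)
Your treatment of part (2) is essentially the paper's: the paper also reduces (2) to reading the words $w(v^{*},n-1,n)$ at the vertex $v^{*}$ indexing the strictly positive row, and the letter-to-$T$-vertex correspondence does the rest. (One technical caveat that applies to both parts: the word-to-path correspondence is obtained by unfolding one level, i.e.\ from $v_{i}v_{i+1}\subset w(u,m,n)$ one passes to $\wt v_{i}\,\ol v_{i+1}\subset w(u,m-1,n)$ and invokes condition (5) of Lemma~\ref{well_telescoping_lemma}, which is stated for levels $\geq 1$; so you should work with words $w(u,m,n)$ for $m\geq 2$ rather than $w(u,1,n)$.)

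In part (1), however, there is a genuine gap, and you have flagged it yourself: the entire content of strong connectivity is the ``reverse direction,'' and your proposal only gestures at extracting it from minimality and density of $\varphi_{\om}$-orbits without carrying that argument out (one would have to relate orbit itineraries to the tower words $w(v,m,n)$ and rule out configurations straddling tower boundaries, none of which is done). The paper avoids this entirely: it proves that any one-level word $w(u,n-1,n)$ traces a path in $\mathcal H$ through the classes of its letters (its Claim 1), notes there is always an edge from $[\ol v,\wt v]$ to the vertex $[\sigma(\wt v), s(\wt e_{\sigma(\wt v)})]$, and then uses that for a minimal vertex $\ol v^{*}$ the word $w(\ol v^{*},n-1,n)$ \emph{begins} at the class of $\ol v^{*}$, ends at the class of $s(\wt e_{\ol v^{*}})$, and (by positivity) passes through any prescribed letter; concatenating these two steps gives a directed path between arbitrary $T$-vertices. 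If you want to keep your ``both relative orders occur'' strategy, the missing fact does not need dynamics at all: after telescoping so that all incidence matrices are positive and $|r^{-1}(u)|\geq 2$, the word $w(u,m,m+2)$ is the concatenation of at least two blocks $w(\cdot,m,m+1)$, each of which contains every letter of $V$; hence for any $v_{1},v_{2}$ both orders $v_{1}\ldots v_{2}$ and $v_{2}\ldots v_{1}$ occur inside a single word at a level $m\geq 2$, and the correspondence (which is where perfectness of $\om$ is genuinely used, via $\sigma(s(\wt e_{v_{i}}))=s(\ol e_{v_{i+1}})$ for adjacent letters) then yields directed paths in both directions. As written, though, the decisive step of part (1) is missing, so the proposal does not yet constitute a proof.
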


\begin{proof} We prove (1); the proof of (2) is similar, if we focus on $w(v,n-1,n)$ where $v$ is the vertex which indexes the strictly positive row in $F_{n}$. Recall that in addition to assuming that $(B,\om)$ is well telescoped, since $\om$ is perfect, we assume we have telescoped so that all entries of $F_n$ are positive for each $n$, and also so that if   $\wt v \ol v $ is a subword of $w(v,m,n)$ for $1\leq m<n$, then $\sigma(\wt v) = \ol v$.
We need to show that for any two vertices $t = [\ol v, \wt v]$ and $t'= [\ol v', \wt v']$ from the vertex set $T$ of $\mathcal H$, there exists a path from $t$ to $t'$.
\medskip

\textit{Claim 1}.  Let $n>2$ and $w(u, n-1, n) = v_1\cdots v_k$ be a word where $v_i \in [\ol v_i, \wt v_i], i=1,..., k$. Then there is a path from $[\ol v_1, \wt v_1]$ to $[\ol v_k, \wt v_k]$ going through the vertices $[\ol v_i, \wt v_i],\ i=1,..., k$, in that order.
\medskip

For, given $1\leq i \leq k-1$, since   $v_i v_{i+1}$ is a subword of $w(u,n-1,n),$ then the concatenation of the two words
$w(v_i,n-2,n-1)w(v_{i+1},n-2,n-1)$ is a subword of $w(u,n-2,n)$, so that $\wt v_i \ol v_{i+1}$ is a subword of $w(u,n-2,n)$. By our telescoping assumptions,  $\sigma(\wt v_i) = \ol v_{i+1}.$

Now, let $T^*$ be the  subset of $T$ of vertices of the form $[\ol v, s(\wt e_{\ol v})]$ where $\ol v \in \ol V$. (Note that  $ [\ol v, s(\wt e_{\ol v})]\neq \emptyset$ since $\ol v \in [\ol v, s(\wt e_{\ol v})]$.)
 It is obvious that there is an edge from $t= [\ol v, \wt v]$ to $t^*= [\sigma(\wt v), s(\wt e_{\sigma(\wt v)})]$ in $\mathcal H$. \medskip

\textit{Claim 2}. For any  $t^* \in T^*$ and $t'= [\ol v', \wt v'] \in T$, there is a path from $t^*$ to $t'$.

\medskip

 To see that this, we
will use Claim 1. Let $t^* =  [\ol v^*, \wt v^*]$ where $\wt v^*  =s (\wt e_{\ol v^*}).$   Let $v\in V_{n-1}$ belong to $t' $ in $\mathcal H$. By the simplicity of $B$, there exists an edge $e \in
E(v, \ol v^*)$ where $\ol v^* \in V_n$. Thus $w(\ol v^*, n-1, n) = \ol v^* \ldots v \ldots \wt v^*$. If $n>2$, then by
Claim 1 that there is a path from $t^*$ to $t'$.

To complete the proof of the proposition, we concatenate the paths from $t$ to $t^*$ and from $t^*$ to $t'$ in $\mathcal H$.
\end{proof}

\begin{remark} It is not hard to see that the converse statement to Proposition \ref{connectedness and goodness} is not true. There are examples of non-simple perfectly ordered diagrams of finite rank whose associated graph is strongly connected.

Note also that the
assumption that $\om$ is perfect is crucial.
Moreover, there are examples of \textit{simple} finite rank Bratteli
 diagrams and skeletons whose associated graph is not   strongly
connected. Indeed,
 let $B$ be a simple stationary  diagram with $V = \{a,b,c\}$ with the skeleton
 $\mathcal F = \{M_a, M_b, m_a, m_b; \wt e_c, \ol e_c\}$ where $s(\wt
 e_c) =b, s(\ol e_c) = a$. Let $\sigma(a) = a, \sigma(b) =
 b$. Constructing the associated graph $\mathcal H$, we see that there
 is no path  from $[b,b]$ to $[a,a]$. It can be also shown that there is
 no perfect ordering $\om$ such that $\mathcal F = \mathcal F_\om$. This
 observation complements  Proposition \ref{connectedness and goodness}
 by stressing the importance of the strong connectedness of $\mathcal H$ for the existence of perfect orderings.
\end{remark}

We illustrate the definitions of skeletons and associated graphs with several examples that will be also used later.

\begin{example}
 \label{examples of graphs}
 Let $(B, \om)$ be an ordered Bratteli diagram of strict rank $d$, where $V = \{1, \ldots ,d\}$, and where $\om$ has $d$ vertical maximal and $d$ vertical minimal paths. Then the skeleton $\mathcal F_\om$ is formed by pairs of vertical paths $(M_i, m_i)$ going downward through the vertex $i \in \{1,..., d\}$.

Let $\sigma$ be a permutation of the set $\{1,2, \ldots , d\}$. The graph $\mathcal H$  is represented as a disjoint union of connected subgraphs generated by cycles of $\sigma$.
 If $\om$ is
perfect, then by Proposition \ref{connectedness and goodness},
$\sigma$ is cyclic.  In this case, $[i,i] =\{i\}$, so vertices
of $\mathcal H$ are $\{[i,i]: 1\leq i\leq d\}$, and there is an edge
from $[i,i]$ to $[j,j]$ if and only if $j = \sigma(i)$. Thus, the
structure of $\mathcal H$ is represented by the cyclic permutation
$\sigma$.

\end{example}

\begin{example}
Let $\mathcal F$ be a skeleton on a simple  strict rank $d$  diagram $B$
such that $V=\{1, \ldots,d-1, d\}$  and $\wt V = \ol V = \{1,\ldots, d-1\}$. Depending on $\sigma$, the  associated graph $\mathcal H$ that can be  associated to $\mathcal F$ is one of two kinds.

\begin{enumerate}\item Suppose $s(\wt e_d) = s(\ol e_d) = j$ where $1\leq j\leq d-1$. Then $[i,i] = \{i\}$ for $1\leq i
\leq d-1$, $i\neq j$, and $[j,j] = \{j,d\}$. In $\mathcal H$ then the
vertex set is  $T=\{[i,i]: 1\leq i \leq d-1\}.$ For $\mathcal H$ to be
strongly connected, $\sigma$ must be a cyclic permutation of $\{1,\ldots ,
d-1\}$, and in this case there is an edge from $[i,i]$ to $[i',i']$ if
and only if $i'= \sigma(i)$.

\item Suppose $s(\wt e_d) = j\neq s(\ol e_d) = i$ where $1\leq i, j \leq d-1$; we can assume that $i < j$. Here  $[l,l] = \{l\}$ for $1\leq l \leq d-1$ and $[i,j] = \{d\}$, so that
$T = \{[l,l]: 1\leq l \leq d-1\}\cup \{[i,j]\}$. Here also, for $\mathcal H$ to be
strongly connected, $\sigma$ must be a cyclic permutation of $\{1, \ldots ,
d-1\}$, and the edges described in (1) form a subset of $P$.  In addition there is  an edge from  $[\sigma^{-1}(i), \sigma^{-1}(i)]$ to $[i,j]$, and also an edge from $[i,j]$ to $[\sigma (j),\sigma(j)]$. If $\sigma(j)=i$, then there is also a loop at $[i,j]$.
\end{enumerate}

\end{example}
\begin{example}
 \label{stationary_example} We continue with Example \ref{stationary_example_a}.
 Since  $\varphi_\om(M_a) = m_a$, $\varphi_\om(M_b) =
m_b$, this means that  $\sigma(a) =a,
\sigma(b) =b$. Noting that $s(\wt e_c) = b, s(\wt e_d) = a, s(\ol e_c)
= a, s(\ol e_d) = b$, we have the completely determined skeleton
$\mathcal F_\om$.
Note that the
vertices $T$ of $\mathcal H$ are $[a,a] = \{a\}, [a,b]
= \{c\}, [b,a] = \{d\}$ and $[b,b] = \{b\}$. The associated
graph $\mathcal H$ is shown in Figure
\ref{stationary_associated_graph}.
\end{example}

\begin{example}
Let $V  = \{v_{1},v_{2},v_{1}^{*},v_{2}^{*}, w_{1},w_{2}\}$ and
     $\wt V = \ol V =\{v_{1}, v_{1}^{*}\} $; let $\sigma(v_{1})=v_{1}$ and $\sigma(v^*_{1})=v^*_{1}$.
Suppose that $W_{v_{1}}'=\{v_{1},v_{2},w_{1}\}$,
     $W_{v_{1}}=\{v_{1},v_{2},w_{2}\}$,
$W_{v_{1}^{*}}'=\{v_{1}^{*},v_{2}^{*},w_{2}\}$  and
     $W_{v_{1}^{*}}=\{v_{1}^{*},v_{2}^{*},w_{1}\}$.
Then the associated graph $\mathcal H$ is strongly connected. We remark
     that this can be the skeleton of an aperiodic diagram with two
     minimal components living through the vertices $\{v_{1},v_{2}\}$
     and $\{v_{1}^{*}, v_{2}^{*}\}$ respectively.
\end{example}

\begin{figure}[h]
\centerline{\includegraphics[scale=1.1]{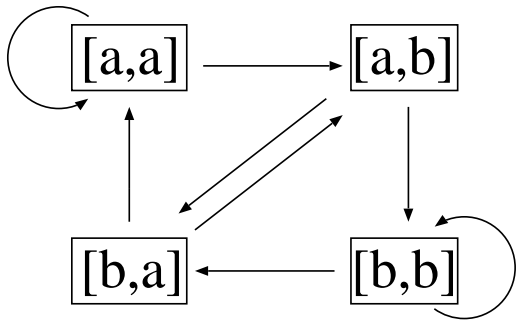}}
\caption{The graph associated to $\mathcal F_\omega$ in Example \ref{stationary_example}
  \label{stationary_associated_graph}}
\end{figure}

We illustrate the utility of the notions of skeleton and accompanying directed graphs in the following results, which give sufficient conditions for an ordering $\om$ to  belong to
$\mathcal P_B^{\,c}$. Even though these are conditions on $\om$, some diagrams $B$ force this condition on all  orderings in ${\mathcal O}_{B}$ - this is the content of Proposition
\ref{corollary_1}.

\begin{proposition}\label{curious}
Let  $(B, \om)$ be a perfectly ordered, well telescoped rank d Bratteli diagram.
Suppose that $\omega$ has $k$ maximal and $k$ minimal paths, where $1<k\leq d$. Then
for   some $v\in V$,
$vv\,\not\in \mathcal L_{B, \om}$.

\end{proposition}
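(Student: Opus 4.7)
The plan is to argue by contradiction: I will suppose that $vv \in \mathcal L_{B,\om}$ for every $v \in V$ and deduce that $B$, after one extra telescoping step, splits as a disjoint union of $k \geq 2$ subdiagrams, violating the paper's standing convention. Throughout, I use well-telescopedness of $(B,\om)$ to set $f(v) := s(\wt e_v) \in \wt V$ and $g(v) := s(\ol e_v) \in \ol V$ (condition (4) of Lemma \ref{well_telescoping_lemma} makes these level-independent), and I label $\wt V = \{\wt v_1,\ldots,\wt v_k\}$, $\ol V = \{\ol v_1,\ldots,\ol v_k\}$ so that the bijection $\sigma$ of Proposition \ref{ExistenceVershikMap} takes $\wt v_i$ to $\ol v_i$; verticality of the extremal paths forces $f|_{\wt V}=\mathrm{id}$ and $g|_{\ol V}=\mathrm{id}$.

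The first step is to establish the identity $g = \sigma \circ f$ on $V$. For each $v$, the remark preceding Lemma \ref{V-H-correspondence} says that the word $vv \in \mathcal L_{B,\om}$ corresponds to a length-one path (that is, a loop) based at the vertex $t(v) = [g(v), f(v)]$ of the associated graph $\mathcal H$, and by Definition \ref{associated_graph} such a loop exists exactly when $\sigma(f(v)) = g(v)$. The identity $g = \sigma \circ f$ then partitions $V$ into non-empty classes $V^{(i)} := f^{-1}(\wt v_i) = g^{-1}(\ol v_i)$, $i = 1,\ldots,k$, and since $|\wt V| = k \geq 2$ there are at least two classes.

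Next I propagate this to the incidence matrices. Fix $u \in V^{(i)}$ at level $n \geq 3$ and write $w(u,n-1,n) = v^{(1)}\cdots v^{(q)}$. For each consecutive pair $v^{(j)}v^{(j+1)}$ the refinement
\[
w(u,n-2,n) = \cdots w(v^{(j)}, n-2, n-1) \, w(v^{(j+1)}, n-2, n-1) \cdots
\]
inserts the subword $f(v^{(j)})\, g(v^{(j+1)}) \in \wt V\,\ol V$ into a word with $m = n-2 \geq 1$, so condition (5) of well-telescoping forces $\sigma(f(v^{(j)})) = g(v^{(j+1)})$; together with $g = \sigma \circ f$ and the injectivity of $\sigma$ this gives $f(v^{(j)}) = f(v^{(j+1)})$. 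Since $v^{(1)} = g(u) = \ol v_i$ and $f(\ol v_i) = \sigma^{-1}(\ol v_i) = \wt v_i$, induction places every $v^{(j)}$ in $V^{(i)}$. Hence every source of an edge in $r^{-1}(u)$ lies in $V^{(i)}$, so $F_m$ is block diagonal with respect to $V = \bigsqcup V^{(i)}$ for every $m \geq 2$.

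Finally, telescoping $B$ once more so that the new level $1$ is the old level $2$ produces a diagram $B'$ all of whose incidence matrices are block diagonal; then $B' = \bigsqcup_{i=1}^k B'^{(i)}$ is a disjoint union of $k \geq 2$ Bratteli subdiagrams sharing only $v_0$, contradicting the paper's convention applied to $B'$. The step I expect to be hardest is the very first one, pulling $g = \sigma \circ f$ out of $vv \in \mathcal L$: the cited remark disposes of it immediately, but a self-contained argument must confront the case in which every witnessing occurrence of $vv$ in $w(u_n,m_n,n)$ has $m_n = 1$ and then recursively unwind the group decomposition of $w(u_n,1,n)$ until either a group boundary (at which a $\wt V\,\ol V$ subword appears and condition (5) applies) or a base block $w(u^*,1,2)$ with $u^* \in \wt V \cap \ol V$ is reached. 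Once $g = \sigma \circ f$ is secured, the remaining steps are routine bookkeeping.
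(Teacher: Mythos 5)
Your first step coincides with the paper's: from $vv\in\mathcal L_{B,\om}$ for every $v$ you extract $\sigma(s(\wt e_v))=s(\ol e_v)$ for all $v$, which is exactly the paper's claim that $V=\bigcup_{i}[v_{m_{\sigma(i)}},v_{M_i}]$ (equivalently $W_{v_{M_i}}=W'_{v_{m_{\sigma(i)}}}$), and both derivations rest on the same mechanism: expand an occurrence of $vv$ one level down so that the junction letters $s(\wt e_v)s(\ol e_v)$ appear, and invoke condition (5) of Lemma \ref{well_telescoping_lemma} / Proposition \ref{ExistenceVershikMap}. After that the routes diverge. The paper observes that this identity collapses the associated graph $\mathcal H$ to $k$ isolated loops, hence $\mathcal H$ is disconnected, and concludes by Proposition \ref{connectedness and goodness}. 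You instead push the relation along consecutive letters of each word $w(u,n-1,n)$ to show that $F_m$ is block diagonal with respect to the classes $V^{(i)}$ for every $m\geq 2$ (this propagation is correct; it is in effect Claim 1 from the proof of Proposition \ref{connectedness and goodness}), and then try to contradict the no-disjoint-union convention applied to a telescoping $B'$ of $B$.

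The gap is in that last step. The convention is a hypothesis on the given diagram $B$, and it is not inherited by telescopings; your argument only controls $F_m$ for $m\geq 2$ (the word $w(u,1,2)$ cannot be expanded to level $0$, and the skeleton says nothing about the non-extremal edges of $E_2$), so what you have proved is that $B'$, not $B$, is a disjoint union, and that contradicts none of the stated hypotheses. The failure is genuine rather than cosmetic: take $V=\{a,b,c\}$ with, for all $n\geq 2$, $w(a,n,n+1)=aa$, $w(b,n,n+1)=bbcb$, $w(c,n,n+1)=bccb$, and at the bottom level $w(a,1,2)=aca$, $w(b,1,2)=bb$, $w(c,1,2)=bcb$. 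This diagram is aperiodic, is not a disjoint union (the cross edge from $c$ at level $1$ into $a$ at level $2$ ties the two blocks together), is well telescoped, and is perfectly ordered with $k=2$ vertical extremal pairs and $\sigma=\mathrm{id}$ on $\{a,b\}$; all of your intermediate conclusions hold for it, yet $aa$, $bb$, $cc$ all lie in $\mathcal L_{B,\om}$, so no contradiction is available from the convention alone. What this shows is that some connectivity assumption beyond the convention is needed; the paper imports it tacitly through Proposition \ref{connectedness and goodness}, which is proved only for $B$ simple or $B\in\mathcal A$. If you add that hypothesis, your block-diagonality conclusion finishes the proof at once --- block-diagonal $F_m$ for $m\geq 2$ with $k\geq 2$ blocks contradicts simplicity, and is incompatible with the strictly positive blocks $B_n^{(i)}$ and the strictly positive row required of diagrams in $\mathcal A$ --- with no appeal to $B'$ or to the convention.
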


\begin{proof}
Let $\om$ have skeleton
 $\mathcal F_\om = \{M_{\wt v},
m_{\ol v}, \wt e_w, \ol e_w: w\in V^* \backslash (V_{0}\cup V_1),\,\, \wt v
\in \wt V\mbox{ and } \ol v \in \ol V\}$, and
suppose that $\om$  is perfect. Then there exists a bijection
$\sigma$ of $\{1,\ldots ,k\}$ such that $\sigma (i) = j$ if and only
if $v_{M_i}v_{m_j}\,\in \mathcal L_{B,\om}$.
Suppose that for each $v$, there is some some $v^*$ such that   $vv\,\in w(v^*,n,n+1)$ for infinitely many $n$.
We claim that $V=
\bigcup_{i=1}^{k} [v_{m_{\sigma(i)}}, v_{M_i}]$.
For if $s(\ol e_{v}) = v_{m_j}$ and $s(\wt e_{v}) =
v_{M_{i}}$, then if $vv\in w(v^{*},n,n+1)$, this implies that
$v_{M_i}v_{m_{j}}\in w(v^{*},n-1,n+1)$. Since this occurs for infinitely many $n$, then  Proposition
\ref{ExistenceVershikMap} tells us that $j=\sigma(i)$.

Since $W$ and $W'$ are both partitions of $V$,
 the relation $V= \bigcup_{i=1}^{k} [v_{m_{\sigma(i)}}, v_{M_i}]$ actually  means that $W_{v_{M_i}} =  W_{v_{m_{\sigma(i)}}}^{'}$ for every $i$.
It follows  that the associated graph $\mathcal H$ has the following simple form: the vertices of $\mathcal H$ are $[v_{m_{\sigma(i)}}, v_{M_i}], i=1,...,k,$ and the edges are given by $k$ loops, one  around each vertex. Since $k>1$, this means $\mathcal H$ is not connected, contradicting Proposition \ref{connectedness and goodness}.

\end{proof}







\subsection{Bratteli diagrams that support no perfect orders}\label{no_perfection}

The next proposition describes how for some aperiodic diagrams $B$ that
belong to the special class $\mathcal A$ (see Definition \ref{aperiodic_definition}), there are structural obstacles to the existence of perfect orders on $B$. This
is a generalization of an example in \cite{medynets:2006}.

\begin{proposition}\label{corollary_1}
Let $B\in \mathcal A$ have $k$ minimal components, and such that for each $n\geq 1$,
 $C_{n}$ is an $s\times s$ matrix where $1\leq s\leq k-1$.
If $k=2$, there are perfect orderings on $B$ only if $C_{n}=(1)$ for
all but finitely many $n$. If $k>2$, then there is no perfect ordering on $B$.

\end{proposition}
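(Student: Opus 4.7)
The plan is to reduce to a well-telescoped setting, interpret each word $w(u,n-1,n)$ for $u \in U$ as a walk in the associated graph $\mathcal H$, and use the hypothesis $s \le k-1$ together with the strict positivity of the blocks $A_n^{(i)}, B_n^{(i)}, C_n$ to produce a combinatorial obstruction.

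First I would telescope so that $(B,\om)$ satisfies Lemma \ref{well_telescoping_lemma} and the language constraint of Remark \ref{telescoping_assumption}, producing the permutation $\sigma : \wt V \to \ol V$ and the associated graph $\mathcal H$ (Definition \ref{associated_graph}). Since the minimal components $X_{B^{(i)}}$ are precisely the closed $\mathcal E$-invariant subsets of $B$ and $\mathcal E$ is generated by $\varphi_\om$, each $X_{B^{(i)}}$ is $\varphi_\om$-invariant, forcing $\sigma$ to preserve the decomposition $V = V^{(1)} \sqcup \cdots \sqcup V^{(k)} \sqcup U$. Let $\mathcal H_i$ denote the vertices of $\mathcal H$ of type $(i,i)$ and let $\mathcal H^*$ denote the remaining vertices. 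Because no edges run from $U$ or from $V^{(j)}$ ($j \ne i$) into $V^{(i)}$, every element of a $\mathcal H^*$-vertex must lie in $U$, and hence $|\mathcal H^*| \le |U| = s \le k-1$.

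Next, fix a level $n$ and a vertex $u \in U_n$ whose row in $F_{n-1}$ is strictly positive (available from condition (4) of Definition \ref{aperiodic_definition} together with the strict positivity of the $B_{n-1}^{(i)}$ and $C_{n-1}$ blocks); then every letter of $V_{n-1}$ appears in $w(u,n-1,n)$. By the discussion preceding Lemma \ref{V-H-correspondence}, perfectness of $\om$ forces this word to trace out a directed walk in $\mathcal H$ that visits every $\mathcal H$-vertex. Projecting each vertex to its component pair gives a walk in a quotient directed graph $\mathcal G$ on $\{1,\ldots,k,U\}$ whose edges are the types realised in $\mathcal H$: the walk traverses every edge of $\mathcal G$, starts at the loop of $a_u = \mathrm{comp}(s(\ol e_u))$, and ends at the loop of $b_u = \mathrm{comp}(s(\wt e_u))$. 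Consequently $a_u$ must reach every node of $\mathcal G$ and every node must reach $b_u$. A short reachability count then rules out $U$ as a node of $\mathcal G$ (otherwise $\mathcal G$ has $k+1$ nodes and from $a_u$ one can reach at most $1+(k-1)=k$ of them), so $\mathcal G$ has exactly $k$ nodes, $k$ loops, and at most $k-1$ distinct non-loop edges.

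The main obstacle is the case analysis on $u$. In \emph{Case I}, when some $u$ satisfies $a_u = b_u = i$, the required walk is a closed walk from $i$ visiting every node, so $\mathcal G$ must be strongly connected and hence contain at least $k$ non-loop edges, contradicting the bound $k-1$. In \emph{Case II}, when $a_u \ne b_u$ for every $u$, a counting argument on the condensation of $\mathcal G$ (each non-trivial SCC of size $c$ costs $\ge c$ internal non-loop edges, plus $L-1$ edges are needed to make the $L$-SCC condensation connected, while only $k-1$ non-loop edges are available) forces every SCC to be a singleton and the condensation to be a directed path $v_0 \to v_1 \to \cdots \to v_{k-1}$ of length exactly $k-1$, with $a_u = v_0$ and $b_u = v_{k-1}$ for every $u$. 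Then every $\mathcal H^*$-vertex has the common type $(v_0,v_{k-1})$, so $\mathcal G$ has only one distinct non-loop edge, and for $k > 2$ this contradicts the length-$(k-1) \ge 2$ path. When $k=2$ only Case I yields a contradiction; Case II remains possible with the single non-loop edge $v_0 \to v_1$, but then the letter $u$ itself appears in $w(u,n-1,n)$ with multiplicity equal to the sole entry $c$ of $C_{n-1}$, and each such occurrence must traverse this edge with no return path available, forcing $c=1$ and hence $C_n = (1)$ for all sufficiently large $n$.
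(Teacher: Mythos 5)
Your proposal reaches the right conclusions and runs on essentially the same machinery as the paper: well-telescoping (Lemma \ref{well_telescoping_lemma}), the skeleton and associated graph $\mathcal H$, the fact that for a perfect order each word $w(u,n-1,n)$ traces a walk in $\mathcal H$ from the cell of $s(\ol e_u)$ to the cell of $s(\wt e_u)$, and the bound of at most $s\le k-1$ cells of $\mathcal H$ outside the component blocks. Where you genuinely differ is the endgame: the paper invokes weak connectivity of $\mathcal H$ (Proposition \ref{connectedness and goodness}(2)), a pigeonhole producing a component $T_1$ with no incoming $\mathcal H$-arcs from outside, and then a first-occurrence argument inside a word $w(v_i,n,n+1)$ using positivity of $B_n^{(1)}$, treating extremal paths through the $C$-block only in the $k=2$ case; you instead project $\mathcal H$-cells to a quotient multigraph $\mathcal G$ of component types and do explicit arc-counting (arborescences, condensation) against the budget of at most $k-1$ non-loop arcs. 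This buys a uniform exclusion of extremal vertices in $U$ for all $k$, kills the case $a_u=b_u$ cleanly, pins the remaining case to a directed Hamiltonian path, and recovers $C_n=(1)$ for $k=2$ from the same ``one crossing of the unique bridge'' count the paper uses. It is a correct and arguably tidier packaging of the same obstruction.

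Two steps need tightening. First, your justification that $\sigma$ preserves the decomposition (``$\mathcal E$ is generated by $\varphi_\om$'') is not literally correct: $\varphi_\om$ disagrees with cofinality exactly at the extremal paths, which are the paths in question. The fact itself is true (the paper asserts it without proof) and is recovered by approximating $M_{\wt v}\in X_{B^{(i)}}$ by non-maximal paths of $X_{B^{(i)}}$, whose $\varphi_\om$-images stay in $X_{B^{(i)}}$ by cofinality, and using continuity and closedness; applying the same to $\varphi_\om^{-1}$ also gives preservation of $U$. Second, the claim that ``every $\mathcal H^*$-vertex has the common type $(v_0,v_{k-1})$'' needs the covering-walk argument at \emph{every} $u\in U$, not just at the single $u$ supplied by condition (4); this is legitimate because Definition \ref{aperiodic_definition}(2) makes every $U$-row of $F_n$ strictly positive (and this survives telescoping), but as written you fix one strictly positive row and then quantify over all $u$ without comment, while your own conclusion in Case II says the $\mathcal H^*$-types are $k-1$ \emph{distinct} path arcs, so the common-type claim cannot come from that one $u$. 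If only condition (4) were available (i.e.\ if $C_n$ could have zero entries), this step would be a genuine gap; it can then be repaired as in the paper, by taking a $U$-vertex $w$ in a cell of type $(v_j,v_{j+1})$ with $j\ge 1$ and noting that positivity of the $B$-blocks forces a component-$v_0$ letter to occur in the interior of $w(w,n,n+1)$, yielding a forbidden arc into $v_0$. Finally, state the small transfer at the end: the argument gives the telescoped $C$-entries equal to $1$, and since these are products of the original positive integer matrices $C_m$, indeed $C_m=(1)$ for all large $m$.
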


\begin{proof} We use the notation of Definition \ref{aperiodic_definition} in this proof.  Let $V^{i}$ be the subset of vertices corresponding to the subdiagram
defined by the matrices  $A_{n}^{(i)}$ for $i=1, \ldots k$, and let  $V^{k+1}$ be the subset of vertices corresponding to the subdiagram
defined by the matrices $C_{n}$. Suppose that $\om$ is a perfect ordering on $B$, and we assume that    $(B, \om)$ is well telescoped and  has skeleton $\mathcal F_\om$. (Otherwise we work with the diagram $B'$ on which $L(\om)$ is well telescoped: Note that if $B$ has
incidence matrices of the given form, then so does any
telescoping.)
Note that $|\ol V| = |\wt V| \geq k$ since each minimal
component has at least one maximal and one minimal path.  Also, if $\wt v \in V^{i}$, then $\sigma(\wt v ) \in V^{i}$.
 There are
$k$ connected components of vertices $T_{1},
\ldots T_{k}$, such that there are no edges from vertices in $T_{i}$
to vertices in $T_{j}$ if $i\neq j$. To see this, if $1\leq i \leq k$, let
$T_{i} = \{[\ol v, \wt v]: \ol v \in V^{i}, \wt v\in
V^{i}\}$.

 If $k=2$,
there are no extremal paths going through $c$, the unique vertex in
$V^{3}$ - otherwise there would be
 disjoint components in $\mathcal H$, and since $\om$ is perfect,
  this would  contradict  Proposition \ref{connectedness and goodness}.
  So $c \in [\ol v, \wt v]$ where $\ol v \in V^{i}$ and  $\wt v\,\in V^{j}$ for some  $i\neq j$. Thus in $H$ there are paths  from vertices in $T_{i}$ to vertices in $T_{j}$ through $c$, but not back again. The only way this can occur validly is if $C_{n}=(1)$ for all  large $n$.

 If $k>2$, then there are at most  $k-1$ vertices remaining in $\mathcal
H$, outside of the components $T_1, \ldots, T_k$. We shall argue that even in the extreme case, where there are $k-1$ such vertices, there would  not be  sufficient connectivity in $\mathcal H$ to support an $\om \in \mathcal P_B$.  Call these $k-1$ vertices $t_1, \ldots t_{k-1}$, where $t_i=[\ol v_i, \wt v_i]. $ If  $V^{k+1}=\{ v_1, \ldots , v_{k-1}\}$, we have labeled so that  $v_i \in t_i$. For each one of these  vertices $t_i$ there are
incoming edges from vertices in at most one of the components $T_{j}$, for
$1\leq j \leq k$, and also outgoing edges to vertices in  at most one of the
 components $T_{j'}$, for $1\leq j' \leq k$. So at least one of the
 components, say $T_{1}$, has no incoming edges with source outside
 $T_{1}$.

 Suppose first that each $t_i =[\ol v_i, \wt v_i]$ satisfy $\ol v_i \in V^1$, in which case all other $T_i's$ have no outgoing edges. But then for $T_i\neq T_j$, $i\neq j$, $i,j\neq 1$, there is neither a path from $T_i$ to $T_j$, nor from $T_j$ to $T_i$.  This contradicts the second part of  Proposition \ref{connectedness and goodness}.

 Suppose next that for some $i$, $t_i=[\ol v_i, \wt v_i]$ and $\ol v_i \not\in V^1$. Since $\wt v_i \not \in V^1$, there is no edge between $t_i$ and $V_1$. Since $B_n^{(1)}$ has strictly positive entries, $w(v_i, n, n+1)$ must contain occurrences from vertices in $V^1$; and these occurrences have to occur somewhere in the interior of the word. But this contradicts the fact that $T_1$ has no incoming edges from outside $T_1$.

\end{proof}

In the above proposition, the extreme case - when there are $k$
extremal pairs, and the vertex set of $\mathcal H$ has size $ 2k-1$ - still does not produce
perfect orderings, but only just, as the next proposition
demonstrates. First we define the family $\mathcal M$ of matrices whose relevance will become apparent in Theorem \ref{d max/min paths}.

\begin{definition} \label{M-matrix}
Let $\mathcal M$ be the family of  matrices whose entries take values in $\N$, and which are  of  the form
\begin{equation}\label{D-matrix}
F= \left(
      \begin{array}{cccc}
        f_1 +1 & f_1 & \cdots & f_1 \\
        f_2     &  f_2 +1 &  \cdots & f_2 \\
        \vdots & \vdots & \ddots & \vdots \\
        f_d & f_d & \cdots &  f_d +1 \\
      \end{array}
    \right)
\end{equation}
for some $d \in \mathbb N$.
\end{definition}

\begin{proposition}\label{corollary_2}
Let $B \in \mathcal A$ be a   Bratteli diagram with $k$ minimal subcomponents, and where for each $n\geq 1$, $C_{n}$  is a $k\times k$ matrix.
If $(B,\om)$ is a perfectly ordered, well telescoped  Bratteli diagram  with skeleton $\mathcal F_\om$, then $C_{n}\,\in \mathcal M$ for all $n$.
\end{proposition}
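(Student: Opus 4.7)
The plan is to analyze, for each transient vertex $v_i\in V^{k+1}$, the word $w(v_i,n,n+1)$, which by perfection of $\om$ and Lemma \ref{V-H-correspondence} corresponds to a walk in the associated graph $\mathcal{H}$ from the cell containing $s(\ol e_{v_i})$ to the cell containing $s(\wt e_{v_i})$. As in the proof of Proposition \ref{corollary_1}, $\mathcal{H}$ decomposes into $k$ strongly connected subgraphs $T_1,\ldots,T_k$, one for each minimal subcomponent, with additional cells containing the $k$ transient vertices acting as potential bridges; by Proposition \ref{connectedness and goodness}(2), $\mathcal{H}$ is weakly connected.

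First I would project each walk $w(v_i,n,n+1)$ onto a directed multigraph $G$ with vertex set $\{1,\ldots,k\}$, where each letter in $V^j$ gives a self-loop at $j$ and each transient letter $v_l$ lying in the cell $[\ol v,\wt v]$ with $\ol v\in V^{a_l}$, $\wt v\in V^{b_l}$ contributes a directed edge $a_l\to b_l$. Applying flow conservation at each $G$-vertex $j$ for the walk associated to range $v_i$ yields
\[
\sum_{l:\,a_l=j}(C_n)_{i,l}-\sum_{l:\,b_l=j}(C_n)_{i,l}=[j=a_i]-[j=b_i].
\]
Strict positivity of each $B_n^{(j)}$ forces the walk to visit every minimal component (so no vertex of $G$ may be isolated), while strict positivity of $C_n$ rules out solutions with zero entries. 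Together these imply that the multigraph formed by the $k$ transient edges on $\{1,\ldots,k\}$ is balanced (in-degree equals out-degree at every vertex) and weakly connected, and hence, having exactly $k$ vertices and $k$ edges, it must be a Hamiltonian cycle. After relabeling we may assume this cycle is $1\to 2\to\cdots\to k\to 1$, with $v_l$ providing the edge $l\to l+1\pmod k$.

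With this structure in hand, the flow equations telescope around the cycle: for each $i$, one obtains $(C_n)_{i,l}=(C_n)_{i,l'}$ for all $l,l'\ne i$ and $(C_n)_{i,i}-(C_n)_{i,l}=1$ for any $l\ne i$. Setting $f_i:=(C_n)_{i,l}$ for such $l$ yields $(C_n)_{i,i}=f_i+1$ and $(C_n)_{i,l}=f_i$ for $l\ne i$, which is precisely the form defining $\mathcal{M}$. The main obstacle I anticipate is handling the case where some extremal paths of $\om$ pass through vertices of $V^{k+1}$, so that $\wt V$ or $\ol V$ contains transient vertices; this alters which cells count as bridges and forces a case analysis. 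However, the fact that $\sigma$ preserves both minimal components and transient status (since $\varphi_\om$ preserves $\mathcal{E}$-orbits and minimal components are $\mathcal{E}$-closed), combined with the strict positivity and weak connectivity constraints, should rule out all configurations other than the Hamiltonian cycle described above, allowing the argument to go through verbatim in the general case.
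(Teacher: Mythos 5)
Your main line is essentially the paper's argument in a different notation: the paper labels the bridge cells as $v_i\in[\ol v_i,\wt v_{h(i)}]$, uses the counting from the proof of Proposition \ref{corollary_1} plus weak connectedness of $\mathcal H$ (Proposition \ref{connectedness and goodness}) to force $h$ to be a cyclic permutation, and then reads off the explicit form of $w(v_i,n-1,n)$, whose letter counts give the rows of $C_n$; your flow-conservation equations extract exactly those counts, and your ``balanced, weakly connected, $k$ edges on $k$ vertices, hence a Hamiltonian cycle'' step is the same counting. The telescoping of the flow identities around the cycle is correct and does give row $i$ equal to $(f_i,\dots,f_i+1,\dots,f_i)$, with the extra $1$ on the diagonal, and this form is stable under the simultaneous relabelling of rows and columns you perform. (One small citation point: Lemma \ref{V-H-correspondence} is the sufficiency criterion and goes the wrong way; the fact that consecutive letters of $w(v_i,n,n+1)$ give edges of $\mathcal H$ comes from condition (5) of Lemma \ref{well_telescoping_lemma}, cf.\ Remark \ref{telescoping_assumption} and Claim 1 in the proof of Proposition \ref{connectedness and goodness}.)

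The genuine gap is the case you defer. Your projection onto the component multigraph is only defined once you know that every cell meeting $V^{k+1}$ has both coordinates $\ol v,\wt v$ in minimal components, i.e.\ that $\wt V\cup\ol V\subseteq\bigcup_{i\le k}V^i$; if some extremal path runs through $V^{k+1}$, then $a_l$ or $b_l$ is simply undefined for that letter, so the argument cannot ``go through verbatim'' --- the configuration has to be excluded beforehand. Your observation that $\sigma$ preserves minimal components and transient status is correct and is the right tool, but by itself it does not exclude transient extremal vertices. What closes the case is precisely the counting the paper imports from Proposition \ref{corollary_1}: to leave component $i$ inside some word one needs a transient vertex $v$ with $s(\ol e_v)\in V^i$, and to enter it one needs a transient vertex with $s(\wt e_v)\in V^i$ (here positivity of the $B_n^{(i)}$ and $C_n$ blocks, together with the $\sigma$-preservation fact, is used, and the small values of $k$ need the positivity of $C_n$ explicitly); since a transient vertex can serve at most one component in each role and there are only $k$ transient vertices, the maps $v\mapsto$ component of $s(\ol e_v)$ and $v\mapsto$ component of $s(\wt e_v)$ must be everywhere defined bijections onto $\{1,\dots,k\}$. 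In particular no $s(\wt e_v)$ or $s(\ol e_v)$ is transient, which rules out transient extremal vertices (such a vertex $v\in\wt V\cap V^{k+1}$ would have $s(\wt e_v)=v\in V^{k+1}$), and it simultaneously hands you the ``balanced'' property you wanted. With that inserted before the projection step, your proof is complete and coincides in substance with the paper's.
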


\begin{proof}
We use the notation of Proposition \ref{corollary_1}. The proof of this last proposition showed us that for a perfect order to be supported by $B$, each component $T_i$ has to have an incoming edge from outside $T_i$. Similarly, each component $T_i$ has to have an outgoing edge with range outside of $T_i$.
Label $V^{k+1}=\{v_1, \ldots v_k \}$ so that $v_i \in [\ol v_i, \wt v_{h(i)}]$ where $\ol v_i \in T_i$ and $h:\{1, \ldots , k\} \rightarrow \{1, \ldots , k\}$ is a bijection.
Thus in $\mathcal H$, from each $T_i$ there is an edge from $T_i$ to $[\ol v_i, \wt v_{h(i)}]$, and  there is an edge from $[\ol v_{h^{-2}(i)}, \wt v_{h^{-1}(i)}]$ to $T_i$. In addition for each $i$, there is  (possibly) an edge from $[\ol v_i, \wt v_{h(i)}]$ to $[\ol v_{h(i)}, \wt v_{h^2(i)}]$. See Figure \ref{last_pic} for an example of such a graph.

\begin{figure}[h]
\centerline{\includegraphics[scale=0.9]{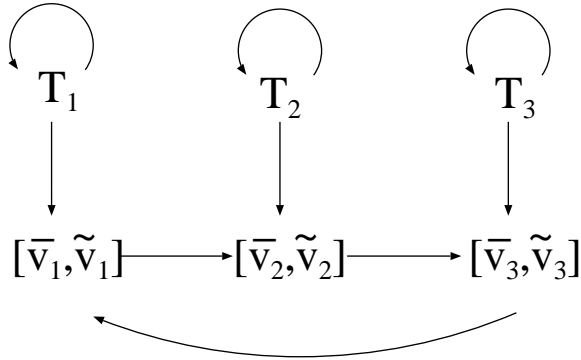}}
\caption{An example of $\mathcal H$ when $B$ has 3 minimal subcomponents and $h =(123)$.
  \label{last_pic}}
\end{figure}

 If $h$ is not a cyclic permutation, then the graph $\mathcal H$ is disconnected, in which case there are no perfect orders on $B$ which have  the skeleton $\mathcal F_\om$. Thus $h$ must be cyclic, and inspection of the graph $\mathcal H$ tells us that for each $v_i \in V^{k+1}$, and for each $n$,
 $v_{i}\,\in [{\ol v}_{i}, {\wt v}_{h(i)}]$ and
$$
w(v_{i},n-1,n) = \left(\prod_{j=1}^{k_n}W_i^{(j)} v_{i} W_{h(i)}^{(j)} v_{h(i)} \ldots  W_{h^{-1}(i)}^{(j)} v_{h^{-1}(i)}  \right)   W_i v_i W_{h(i)}
$$
where $\prod$ refers to concatenation of words, each $W_i^{(j)}$ is a (possibly empty) word with letters in $V^{i}$,  and $W_i$, $W_{h(i)}$ are non-empty words. The result follows.
\end{proof}

\subsection{Perfect orderings that generate odometers}\label{odometer}

\begin{definition}\label{odometer_definition}
If a minimal Cantor dynamical system $(Y, T)$ admits an adic  representation by a Bratteli diagram $B $ with $|V_n| =1$ for all levels $n$, then $T$ is called an \textit{odometer}.
\end{definition}

Let $\mathcal L \subset \mathcal A^{\mathbb N}$. A word $W\in \mathcal L
$ is {\em periodic } if it can be written as a concatenation $W=U^{k}$
of $k$ copies of a word $U$
where $k>1$. Given a word $W=w_{1}\ldots w_{p}$, we define
$\sigma^{i}(W):= w_{i+1}w_{i+2}\ldots w_{p}w_{1}\ldots w_{i}$.
We say that $\mathcal L$ is
{\em periodic} if there is some word $V\in \mathcal L$ such that any
word $W\in \mathcal L$ is of the form $SV^{k}P$ for some suffix (prefix)
 $S=S(W)$ ($P=P(W)$) of $V$.  Finally if $\mathcal Q=\{q_{1}, q_{2}, \ldots q_{n}\}$ is a partition of a set $X$ and
 $T: X\rightarrow X$ is a bijection, then we say that $\mathcal Q$ is {\em periodic}
for $T$ if $T(q_{i})= q_{i+1}$ for $1\leq i <n$ and $T(q_{n})=q_{1}$.

Next we state and prove a result which Fabien Durand has communicated
to us as a known result; the proof below is a direct generalisation of
 the  proof of Part (ii) of Proposition 16 in
\cite{durand_host_scau:1999}.

\begin{proposition} \label{adic_odometers}
Let $\om$ be a perfect ordering on the simple, strict  finite rank diagram $B$. If ${\mathcal L}_{B,\om}$ is periodic, then
$(X_{B}, \varphi_{\om})$ is topologically conjugate to an odometer.
\end{proposition}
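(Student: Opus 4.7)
The plan is to exploit the periodicity of $\mathcal L_{B,\om}$ to construct, for each level $m$, a continuous $\varphi_\om$-equivariant factor map $\pi_m:(X_B,\varphi_\om)\to(\mathbb Z/H_m\mathbb Z, +1)$ with $H_m\to\infty$, and then to combine these into a topological conjugacy onto an odometer.

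First I would telescope $(B,\om)$, preserving perfectness by Lemma \ref{good_orders_and_telescoping}, so that $V_n=V$ at each level, $(B,\om)$ is well-telescoped, and for all $n$, $v\in V_n$, and $m<n$, the word $w(v,m,n) = S_v^{(m,n)} U^{k_v^{(m,n)}} P_v^{(m,n)}$, with $S_v^{(m,n)}$ a proper suffix and $P_v^{(m,n)}$ a proper prefix of $U$ (of lengths less than $p:=|U|$). After passing to the primitive root if necessary, I may assume $U$ is primitive. The key structural observation is a \emph{junction identity}: when one writes the concatenation $w(v',m,n+1) = \prod_{q} w(s(\gamma_q), m, n)$ expressing how level-$n$ words compose into level-$(n+1)$ words, each interior product $P_{s(\gamma_q)}^{(m,n)} S_{s(\gamma_{q+1})}^{(m,n)}$ is a subword of $UU$, and primitivity of $U$ forces $|P_{s(\gamma_q)}^{(m,n)}| + |S_{s(\gamma_{q+1})}^{(m,n)}| \in \{0,p\}$. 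Writing $h(\cdot)$ for the total level-$m$ height of a word, this translates to $h(P_{s(\gamma_q)}^{(m,n)}) + h(S_{s(\gamma_{q+1})}^{(m,n)}) \equiv 0 \pmod{H_m}$, where $H_m := \sum_{i=1}^p h_{u_i}^{(m)}$ is the \emph{super-block height} at level $m$.

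Next I would define $\pi_m(x) := (j_v^{(n)}(x) - h(S_v^{(m,n)})) \bmod H_m$, where $x\in T_v^n$ occupies floor $j_v^{(n)}(x)$ at some sufficiently large level $n$. A telescoping sum using the junction identity proves $\pi_m(x)$ is independent of $n$, hence well-defined. Continuity on the dense complement of the (finite) extremal set $X_{\max}(\om)\cup X_{\min}(\om)$ is immediate by local constancy, and equivariance $\pi_m\circ\varphi_\om=+1\circ\pi_m$ holds trivially on interior points. At an extremal path $M_v$, continuity from nearby interior points gives $\pi_m(M_v) \equiv h(P_v^{(m,n)})-1 \pmod{H_m}$, and the required equivariance $\pi_m(m_{\sigma(v)}) = \pi_m(M_v)+1$ reduces to the junction identity applied to the specific pair $(v,\sigma(v))$. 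This pair appears as consecutive sources in some $w(v'',m,n+1)$ precisely because $v\sigma(v)\in\mathcal L_{B,\om}$ by Proposition \ref{ExistenceVershikMap} and the perfectness of $\om$, after further telescoping so that the occurrence is realized by adjacent edges in $r^{-1}(v'')$.

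Finally the joint map $\Psi = (\pi_m)_m : X_B \to \prod_m \mathbb Z/H_m\mathbb Z$ is continuous and $\varphi_\om$-equivariant, and its image lies in the procyclic closure of the orbit of $\Psi(x_0)$ under translation by $(1,1,\ldots)$, which is an odometer (isomorphic to $\varprojlim \mathbb Z/L_m\mathbb Z$ with $L_m := \operatorname{lcm}(H_1,\ldots,H_m)\to\infty$). Surjectivity onto this closure follows from minimality of $(X_B,\varphi_\om)$ (guaranteed by simplicity of $B$), while injectivity follows because any $x\ne y$ differ at some edge level $n$, so for $m$ large enough that $H_m$ exceeds all level-$n$ tower heights, the floor-position analysis forces $\pi_m(x)\ne\pi_m(y)$. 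Thus $\Psi$ is a topological conjugacy onto an odometer. The technical heart — and the principal obstacle — is establishing the junction identity and threading it carefully through the continuity and equivariance analysis at the finitely many extremal paths; this is where the perfectness hypothesis enters decisively, via the constraint $v\sigma(v)\in\mathcal L_{B,\om}$ provided by Proposition \ref{ExistenceVershikMap}.
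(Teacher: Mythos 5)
Your argument is correct and is essentially the paper's proof in a different packaging: the paper constructs the refining, clopen, generating sequence of $\varphi_\om$-periodic partitions $\mathcal Q_m$ of cardinality $H_m=\sum_{i=1}^{p}h^{(m)}_{u_i}$ (generalizing Proposition 16 of \cite{durand_host_scau:1999}), whose elements are precisely the fibres of your phase maps $\pi_m$, and it rests on the same two ingredients you isolate, namely the junction identity ($P_v^{(n)}S_u^{(n)}$ empty or equal to the period word whenever $vu\in\mathcal L_{B,\om}$) and the constraint $v\,\sigma(v)\in\mathcal L_{B,\om}$ from Proposition \ref{ExistenceVershikMap} to handle the extremal paths. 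Your inverse-limit/lcm formulation simply replaces the paper's explicit verification that $|\mathcal Q_{m+1}|$ is a multiple of $|\mathcal Q_m|$, and your injectivity step is the paper's observation that these partitions refine the cylinder partitions, so the two write-ups differ only cosmetically.
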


\begin{proof}
Suppose ${\mathcal L}_{B,\om}$ is periodic. Let $V$ denote the vertex set of $B$ at each level.  Fix $\ol v $ such that there is a vertical minimal path going through the vertex $\ol v$.
Then for all $k$, $\lim_{n\rightarrow \infty} w(\ol v , k, n) $ exists. In particular
$\lim_{n\rightarrow \infty} w(\ol v , 1, n) = WWW\ldots $ where
$W = w_{1}w_{2} \ldots w_{p}$ is of length $p$ and is not periodic.

We define a sequence of partitions $({\mathcal Q}_n)$ that will be  refining,
clopen, generating periodic partitions of $(X_{B}, \varphi_{\om})$, and
 such that $|\mathcal Q_{n+1}|$ is a multiple of $|\mathcal Q_{n}|.$ The
 existence of this sequence implies that $(X_{B}, \varphi_{\om})$ is an
 odometer. For $x = x_{1}x_{2} \ldots \in X_{B}$ (where
 $s(x_{1})=v_{0}$), $j\in \mathbb N$, and $0\leq i\leq p-1$,  let
\[[i]_{j}= \{x: s(x_{j+1}) \, s((\varphi_{\om} (x))_{j+1})\,\ldots
s((\varphi_{\om}^{p-1}(x))_{j+1} = \sigma^{i}(W)\}.\]
Let
\[\mathcal Q_{1}:= \{[i]_{1}: 0\leq i\leq  p-1\} .\]
Since $W$ is not periodic each $x$ lives in only one $[i]_{1}$,
 and
 $\mathcal Q_{1} $
is of period $p$ for $\varphi_{\om}$.

Given a vertex $v\in V_{n}$, recall that
$ h_{v}^{(n)}= |E(v_{0}, v)|$ for $v\in V_{n}$.
 Define for $n>1$
\[\mathcal Q_n:=\{[i_{1},i_{2}]: 0\leq i_{2}\leq p-1, \,\,\,
0\leq i_{1}\leq h_{w_{i_{2}+1}}^{(n)}-1 \} \]
 where
\[[i_{1},i_{2}]:=
 [i_{2}]_{n} \cap \{x: x_{1}x_{2}\ldots x_{n}\in E(v_{0}, w_{i_{2}+1})
\mbox{ and has  $\om$-label $i_{1}$   }\}.
\]
Then for each $n\geq 1$, $\mathcal Q_{n}$ is a clopen partition,   $\mathcal
 Q_{n+1}$ refines $\mathcal Q_{n}$, and it is clear that $(\mathcal
 Q_{n})$ is a generating sequence of partitions. We claim that
 $\mathcal Q_{n}$ is $\varphi_{\om}$ periodic. For, if
$i_1 < h_{w_{i_{2}+1}}^{(n)}-1$,
$\varphi_{\om}([i_1 ,
 i_2]) =([i_{1}+1,i_{2}])$.
If $i_1 = h_{w_{i_{2}+1}}^{(n)}-1$ and $i_{2}<p-1$ then $\varphi_{\om}([i_1 ,
 i_2])= [(0, i_{2}+1)]$. Finally
$\varphi_{\om}([h_{w_{i_{2}+1}}^{(n)}-1, p-1]) =[0,0].$

It remains to show that $|\mathcal Q_{n+1}|$ is a multiple of $|\mathcal
 Q_{n}|$.
 Note that  for each $v\, \in V$ and each $n\geq 2$, $w(v,n-1,n) = S_{v}^{(n)}
 W^{\alpha_{v}^{(n)}}P_{v}^{(n)}$ with $S_{v}^{(n)}$ a proper suffix of $W$, $P_{v}^{(n)}$ a proper prefix of $W$, and, whenever $vu\in \mathcal L(B,\om)$, then
$P_{v}^{(n)}S_{u}^{(n)}$ is either empty or equal to $W$. Note that   $w_{p}\ol w_{1} \in \mathcal L(B,\om)$, so that for each $n,$ $P_{w_p}^{(n)} S_{w_{1}}^{(n)} = W$ or is the empty word. We assume that $P_{w_p}^{(n)} S_{w_{1}}^{(n)} = W$ in the computation below, otherwise simply  remove the `1'.
If $W'\subset W$, let
$\#_{W'}(W)$ denote the distinct number of occurrences of $W'$ in W.
 Then
\begin{eqnarray*}
|\mathcal Q_{n+1}| & = & p \sum_{v \in W} \#_v(W) h_v^{(n+1)}
\\ & = &  p \sum_{v \in W} \#_v(W) \left[    \alpha_v^{(n+1)} + \sum_{v_1w_1: P_{v_1}^{(n+1)} S_{w_1}^{(n+1)}=W }\#_{v_1w_1}(W) +1 \right] |\mathcal Q_{n}|.
\end{eqnarray*}

\end{proof}

We will now consider in detail the class of finite rank diagrams
described in Example \ref{examples of graphs}. Let the
Bratteli diagram $B$ have strink rank $d>1$.
We show that if $B$ is to support
a perfect ordering with $d$ maximal and $d$ minimal paths, then a
certain structure is imposed on the incidence matrices of $B$.

\begin{definition}
Denote by $\mathcal D$ the set of rank $d$ simple Bratteli diagrams $B$ where $V_n=\{v_1, \ldots v_d \}$ for each $n\geq 1$, and
whose incidence matrices  $(F_n)$ eventually belong to the class $\mathcal M$ (see Definition \ref{M-matrix}), and
where all entries are non-zero.
\end{definition}
It is not hard to check that the set
$\mathcal D$ is invariant under telescoping of diagrams.

\begin{proposition}\label{d max/min paths}
Let $B$ be a simple, strict rank $d$ Bratteli diagram.

\begin{enumerate}
\item
Suppose $B\in \mathcal D$, and  $\sigma$ is  a cyclic permutation  of the set $\{1, 2,...,d\}$.
Then there exists an ordering $\om\in \mathcal P_B \cap \mathcal O_B(d) $ on $B$ such that
$$
X_{\max}(\om) = \{M_1,...,M_d\}, \ \  X_{\min}(\om) = \{m_1,...,m_d\}
$$
where  $M_i$  ($m_j$) is an eventually  vertical path through the vertex
$v_{i}$  ($v_{j}$, respectively), $i, j = 1,...,d$. Moreover, the corresponding Vershik map $\varphi_\om$ satisfies the condition
\begin{equation}\label{Vershik map and permutation}
\varphi_\om(M_i) = m_{\sigma(i)}.
\end{equation}
\item  Suppose there exists an ordering $\om \in \mathcal P_B \cap \mathcal O_B(d)$ such that all maximal and minimal paths are eventually vertical. Then the Vershik map $\varphi_\om$ determines a cyclic permutation on the set $\{1,...,d\}$ and   $B$ belongs to $\mathcal D$.
\end{enumerate}
\end{proposition}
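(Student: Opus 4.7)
\medskip\noindent\textbf{Proof proposal for Proposition \ref{d max/min paths}.}

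For Part (1) the plan is to construct the ordering explicitly and then invoke Proposition~\ref{ExistenceVershikMap}. Since $B\in\mathcal D$, after telescoping I may assume each incidence matrix $F_n\in\mathcal M$ has all entries strictly positive, so the parameters satisfy $f_i^{(n)}\ge 1$ for every $i$ and $n$. For each level $n\ge 1$ and each vertex $v_i\in V_{n+1}$, the set $r^{-1}(v_i)$ contains $f_i^{(n)}+1$ edges with source $v_i$ and $f_i^{(n)}$ edges with source $v_j$ for every $j\neq i$, giving a total of $df_i^{(n)}+1$ edges. I will specify the ordering on $r^{-1}(v_i)$ by declaring the associated word of sources to be
\[
w(v_i,n,n+1)\;=\;v_i\bigl(v_{\sigma(i)}v_{\sigma^2(i)}\cdots v_{\sigma^{d-1}(i)}v_i\bigr)^{f_i^{(n)}},
\]
and, within the block of edges coming from a fixed source, ordering them arbitrarily. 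The counts of each letter in this word match the entries of $F_n$, and because the word begins and ends with $v_i$, the source of $\overline{e}_{v_i}$ and of $\widetilde{e}_{v_i}$ is $v_i$ at every level, so $m_i$ and $M_i$ are vertical paths through $v_i$.

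It remains to verify that the resulting $\om$ is perfect with $\varphi_\om(M_i)=m_{\sigma(i)}$. By construction every length-two subword of $w(v_i,n,n+1)$ is of the form $v_j v_{\sigma(j)}$, because $\sigma$ is cyclic and the concatenation cycles through the orbit. Proceeding by induction on $k$, suppose every length-two subword of $w(u,n-k+1,n+1)$ for every $u\in V_{n+1}$ is of the form $v_jv_{\sigma(j)}$. Since $w(v_i,n-k,n+1)$ is the concatenation of the words $w(s(e),n-k,n-k+1)$ as $e$ runs through $r^{-1}(v_i)$ in the lexicographic order, every interior junction $v_{j_\ell}v_{j_{\ell+1}}$ occurs either inside one block $w(s(e),n-k,n-k+1)$ (covered by the inductive hypothesis applied one level lower) or at a boundary between two consecutive blocks, where the last letter of one block and the first of the next are the source vertices $v_{j_\ell},v_{j_{\ell+1}}$ appearing consecutively in $w(v_i,n,n+1)$, hence equal to $v_{j_\ell}v_{\sigma(j_\ell)}$ by the base case. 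Thus $v_{M_i}v_{m_j}\in\mathcal L_{B,\om}$ only for $j=\sigma(i)$, and the perfectness together with $\varphi_\om(M_i)=m_{\sigma(i)}$ follows from Proposition~\ref{ExistenceVershikMap}.

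For Part (2) the plan is essentially to reverse the above reasoning. Telescope $(B,\om)$ so that all $d$ extremal paths are vertical; then $\widetilde V=\overline V=V=\{v_1,\dots,v_d\}$, the skeleton $\mathcal F_\om$ has $s(\widetilde e_{v_i})=s(\overline e_{v_i})=v_i$, and each vertex $[v_i,v_j]$ of the associated graph $\mathcal H$ is nonempty only when $i=j$, giving $T=\{[v_i,v_i]:1\le i\le d\}$ with a single edge $[v_i,v_i]\to[v_{\sigma(i)},v_{\sigma(i)}]$ for the bijection $\sigma$ of Proposition~\ref{ExistenceVershikMap}. Since $B$ is simple and $\om$ is perfect, Proposition~\ref{connectedness and goodness} forces $\mathcal H$ to be strongly connected, and a graph whose edges trace the cycles of a permutation is strongly connected only when that permutation is a single $d$-cycle; hence $\sigma$ is cyclic. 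In particular $\sigma$ has no fixed points, so $v_iv_i\notin\mathcal L_{B,\om}$.

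Now fix $n\ge 1$ and a vertex $v_j\in V_{n+1}$. The word $w(v_j,n,n+1)$ starts with $v_j$ (from $\overline e_{v_j}$), ends with $v_j$ (from $\widetilde e_{v_j}$), and every length-two subword must be of the form $v_iv_{\sigma(i)}$ by Proposition~\ref{ExistenceVershikMap} together with Remark~\ref{telescoping_assumption}. Since the only transitions available are $v_i\to v_{\sigma(i)}$ and $\sigma$ is a single $d$-cycle, this forces $w(v_j,n,n+1)$ to trace an integer number of full cycles of $\sigma$ starting and ending at $v_j$; that is,
\[
w(v_j,n,n+1)\;=\;v_j\bigl(v_{\sigma(j)}v_{\sigma^2(j)}\cdots v_{\sigma^{d-1}(j)}v_j\bigr)^{k}
\]
for some $k=k_j^{(n)}\ge 0$. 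Counting occurrences of each letter gives $f^{(n)}_{v_j,v_j}=k+1$ and $f^{(n)}_{v_j,v_i}=k$ for $i\ne j$, so $F_n\in\mathcal M$ with parameters $f_j^{(n)}=k_j^{(n)}$. Simplicity of $B$ lets us further telescope so that every entry of every $F_n$ is strictly positive, hence $f_j^{(n)}\ge 1$; this gives $B\in\mathcal D$. The main technical point is the inductive junction argument in Part (1); once that is established the remaining bookkeeping about letter counts and matrix structure is routine.
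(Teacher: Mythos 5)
Your core arguments are sound and track the paper's own proof closely: in Part (1) you build exactly the same word $v_j\bigl(v_{\sigma(j)}v_{\sigma^2(j)}\cdots v_{\sigma^{d-1}(j)}v_j\bigr)^{f_j^{(n)}}$ on $r^{-1}(v_j)$, and your two-letter-junction induction (interior pairs by induction one level down, block boundaries reduce to consecutive letters one level up, using that each block starts and ends with its own vertex) is a correct and welcome expansion of the step the paper leaves implicit before invoking Proposition \ref{ExistenceVershikMap}; in Part (2) the forced form of $w(v_j,n,n+1)$ and the cyclicity of $\sigma$ (via the associated graph and Proposition \ref{connectedness and goodness}, where the paper uses simplicity directly) are fine.

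The genuine gap is in how you use telescoping. In Part (1), ``after telescoping I may assume each $F_n\in\mathcal M$'' is not legitimate: the ordering you must produce lives on $B$, and an ordering constructed on a telescoping $B'$ does not pull back to $B$ (the map $L:\mathcal O_B\to\mathcal O_{B'}$ is far from onto). This is harmless only because it is unnecessary: by the definition of $\mathcal D$, $F_n\in\mathcal M$ with nonzero entries for all $n\geq N$, so define your word at those levels, order the first $N$ levels arbitrarily, and note the statement only asks for \emph{eventually} vertical extremal paths (perfectness may then be checked after telescoping via Lemma \ref{good_orders_and_telescoping}, which is valid in that direction). In Part (2) the same move is fatal as written: you telescope at the start and again at the end (``further telescope so that every entry of every $F_n$ is strictly positive''), but the conclusion $B\in\mathcal D$ concerns the incidence matrices of $B$ itself. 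What your argument establishes is that the telescoped matrices, i.e.\ products $F_{n_{k+1}-1}\cdots F_{n_k}$, lie in $\mathcal M$; since a product lying in $\mathcal M$ does not force its factors to lie in $\mathcal M$, and telescoping cannot change the entries of $B$'s own matrices, this does not prove the statement. The repair is to argue at $B$'s own levels, as the paper does: beyond the level where all extremal paths have become vertical, $s(\wt e_{v_j})=s(\ol e_{v_j})=v_j$, so $w(v_j,n-1,n)$ begins and ends with $v_j$; the continuity argument behind Proposition \ref{ExistenceVershikMap} (Remark \ref{telescoping_assumption}) shows that beyond some level every two-letter junction in these single-level words is of the form $v_iv_{\sigma(i)}$; cyclicity of $\sigma$ then forces the periodic form of $w(v_j,n-1,n)$, hence $F_n\in\mathcal M$ eventually; and $f_j^{(n)}\geq 1$ follows not from telescoping but from the strict rank $d$ hypothesis $|r^{-1}(v_j)|\geq 2$, since the $j$-th row sum is $df_j^{(n)}+1$.
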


\begin{proof} (1) We need to construct a perfect ordering $\om$ on $B$ such that
(\ref{Vershik map and permutation}) holds. For every $v_{j} \in
\{v_{1},...,v_{d}\} = V_n$ and every $n $ large enough, we take $d$ subsets $E(v_{i}, v_{j})$ of
$r^{-1}(v_{j})$ where $v_{i}  \in V_{n-1}$. Then $|E(v_{i}, v_{j})| =
f_j^{(n)}$ if $i \neq j$ and $|E(v_{j}, v_{j})| = f_j^{(n)} + 1$. Hence
$|r^{-1}(v_{j})| = df_j^{(n)} +1$. for each $n\geq 1$ and each $v_{j}\,\in V_{n}$ define the order on $r^{-1}(v_{j})$ as follows:
\begin{equation}\label{periodic}
w(v_{j},n-1,n)= (v_{j}\,v_{\sigma(j)}\,v_{\sigma^{2}(j)}\ldots
v_{\sigma^{d-1}(j)})^{f_{j}^{(n)}}v_{j}\, .
\end{equation}
Clearly, relation (\ref{periodic}) defines explicitly a linear order on $r^{-1}(v_j)$.
To see that   $\varphi_\om$ is continuous,
it suffices to note that for each $j$ there is a unique $i:=\sigma(j)$ such that $v_{j}v_{i}\,\in {\mathcal L}_{B,\om}$. By Proposition \ref{ExistenceVershikMap} we are done.

(2) Conversely, suppose that $\om$ is a perfect ordering on
$B$ with $d$ maximal and $d$ minimal eventually vertical paths, so that each
vertex has to support both a maximal and a minimal path $M_{i}$ and
$m_{i}$; thus for each $i$ and each $n$ large enough, the word $\om(v_{i},n-1,n)$
starts and ends with $v_{i}$. Since $\om$ is perfect then by Proposition
\ref{ExistenceVershikMap} there is a permutation $\sigma$ such that
for each $j\,\in \{1,\ldots d\}$ only $v_{j}v_{\sigma(j)}\,\in {\mathcal L}_{B,\om}$.
So, for each $j$ and all but finitely many $n$, there is a $f_{j}^{(n)}$ such that
\begin{equation}\label{periodic1}
w(v_{j},n-1,n)= (v_{j}\,v_{\sigma(j)}\,v_{\sigma^{2}(j)}\ldots
v_{\sigma^{d-1}(j)})^{f_{j}^{(n)}}v_{j}\, .
\end{equation}

Since $B$ is simple, $\sigma$ has to be cyclic so that all vertices occur in the right hand side of (\ref{periodic1}). From (\ref{periodic1}) it also follows that all but finitely many of the incidence matrices of $B$ are of the form (\ref{D-matrix}).
\end{proof}
\begin{corollary}\label{bad_ordering}
Let $B$ be a simple Bratteli diagram of rank $d\geq 2$ and let $\om\in \mathcal P_B \cap \mathcal O_B(d)  $.
Then  $(X_{B},\varphi_{\om})$ is
conjugate to an odometer.
\end{corollary}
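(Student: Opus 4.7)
My plan is to reduce to the well-telescoped setting, invoke Proposition \ref{d max/min paths}(2) to extract the rigid cyclic structure of the language, and then apply Proposition \ref{adic_odometers}.

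First I would pass to a convenient telescoping. By Lemma \ref{well_telescoping_lemma} applied to the perfectly ordered diagram $(B,\om)$ together with Lemma \ref{good_orders_and_telescoping}, there is a telescoping $(B',\om')$ with $\om'=L(\om)$ such that $B'$ is strictly rank $d$, $\om' \in \mathcal P_{B'}$, and all $\om'$-extremal paths are vertical. Since telescoping preserves the number of maximal paths, $\om' \in \mathcal P_{B'} \cap \mathcal O_{B'}(d)$. The natural identification $X_B \to X_{B'}$ is a topological conjugacy between $(X_B, \varphi_\om)$ and $(X_{B'}, \varphi_{\om'})$, so it suffices to show that $(X_{B'}, \varphi_{\om'})$ is conjugate to an odometer.

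Next I would apply Proposition \ref{d max/min paths}(2). Writing $V = \{v_1,\ldots,v_d\}$, we obtain a cyclic permutation $\sigma$ of $\{1,\ldots,d\}$ such that $\varphi_{\om'}(M_i) = m_{\sigma(i)}$, and the formula (\ref{periodic1}) holds: for each vertex $v_j \in V$ and each sufficiently large $n$,
\[
w(v_j,n-1,n) = (v_j\,v_{\sigma(j)}\,v_{\sigma^2(j)}\cdots v_{\sigma^{d-1}(j)})^{f_j^{(n)}}\,v_j.
\]
Set $V_0 := v_1\,v_{\sigma(1)}\,v_{\sigma^2(1)}\cdots v_{\sigma^{d-1}(1)}$. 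Because $\sigma$ is cyclic, each $V_j := v_j\,v_{\sigma(j)}\cdots v_{\sigma^{d-1}(j)}$ is a cyclic shift of $V_0$, so $w(v_j,n-1,n)$ is a subword of the infinite word $V_0^\infty$; in particular $w(v_j,n-1,n)=S\,V_0^{k}\,P$ for some suffix $S$ and prefix $P$ of $V_0$.

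I would then verify that $\mathcal L_{B',\om'}$ is periodic in the sense of Section \ref{odometer}. For longer concatenations $w(v,m,n)$ with $n-m>1$, note that by Proposition \ref{ExistenceVershikMap} applied to $(B',\om')$ and the fact that each vertex supports both a maximal and a minimal path, the only two-letter words in $\mathcal L_{B',\om'}$ are $v_j v_{\sigma(j)}$; consequently, whenever two consecutive factors $w(v_j,n-1,n)$ and $w(v_{\sigma(j)},n-1,n)$ appear in some $w(v,m,n)$ with $m<n-1$, their junction $v_j v_{\sigma(j)}$ completes a cyclic period of $V_0$. Iterating this argument across the intermediate levels shows that every $w(v,m,n)$ is again a subword of $V_0^\infty$, hence of the form $S V_0^{k} P$ with $S$ a suffix and $P$ a prefix of $V_0$. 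Therefore $\mathcal L_{B',\om'}$ is periodic with period word $V_0$.

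Finally, Proposition \ref{adic_odometers} (applicable since $B'$ is simple, strict finite rank and $\om'$ is perfect) yields that $(X_{B'}, \varphi_{\om'})$ is topologically conjugate to an odometer, and by the conjugacy between $(X_B,\varphi_\om)$ and $(X_{B'},\varphi_{\om'})$, the same holds for $(X_B,\varphi_\om)$. The main technical point to get right is the verification that the periodicity of the single-level words propagates to the multi-level words $w(v,m,n)$, which is the step where one has to use the sharp restriction on two-letter subwords imposed by Proposition \ref{ExistenceVershikMap}.
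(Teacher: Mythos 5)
Your proposal is correct and follows essentially the same route as the paper: reduce to a well-telescoped diagram (conjugacy being unaffected by telescoping), extract the cyclic structure and formula (\ref{periodic1}) from Proposition \ref{d max/min paths}, conclude that the language is periodic, and apply Proposition \ref{adic_odometers}. The only difference is that you flesh out the verification that periodicity propagates to multi-level words, while the paper simply cites the proof of Proposition \ref{d max/min paths} for this and then adds, as a bonus, a more explicit sequence of periodic refining generating partitions $(Q_n)$ built from the levels of $B$.
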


\begin{proof} We can assume that  $(B,\om)$ is well telescoped (conjugacy of two adic systems is invariant under telescoping of either of them).
Note that the proof of Proposition \ref{d max/min paths} tells us that
 $\mathcal L(B,\om)$ is periodic.  Lemma \ref{adic_odometers} tells us
 that $(X_{B},\varphi_{\om})$ is
conjugate to an odometer; however in this specific case there is a
 simpler sequence of periodic, refining, generating partitions
 $(Q_{n})$: let
 $ Q_{n}$ be the clopen partition
 defined by the first $n$ levels of $B$, and write
$Q_n=
 \coprod_{i=1}^{d} Q_{n}(v_{i})$, where $Q_{n}(v_{i})$ is the set of all
 paths from $v_{0}$ to $v_{i}\,\in V_{n}$.  Each non-maximal path
 in $Q_{n}(v_{i})$ is mapped by $\varphi_{\om}$ to its successor in $Q_n(v_i)$. For $i\,\in
 \{1, \ldots , d\}$, let $M_{i}^n$ denote the maximal path in $Q_{n}(v_{i})$.
 Since the ordering $\om$ is perfect,
 $\varphi_{\om}(M_{i}^n)= m_{\sigma(i)}^n$, where $m_{\sigma(i)}^n$ is the minimal path in
 $Q_{n}(v_{\sigma(i)})$. Thus the partition $Q_{n}$ is
 $\varphi_{\om}$-periodic.
 We will also compute the sequence $(k_{n})$ such
 that $|Q_{n+1}| = k_{n} |Q_{n}|$. By Proposition \ref{d max/min paths}, the incidence matrices of $B$ are of the form
(\ref{D-matrix}):  all columns of $F_{n}$ sum to the same constant $k_{n}=(1+\sum_{i=1}^{d}f_{i}^{(n)})$. Let $F_n = (f^{(n)}_{i,j})$ and
$h_{i}^{(n)}:=|Q_{n}(v_{i})|$, then $h_{i}^{(n+1)} = \sum_{j=1}^d f^{(n)}_{i,j} h_{j}^{(n)}$ and

\begin{eqnarray*}
     |Q_{n+1}| &= &\sum_{i=1}^{d}h_{i}^{(n+1)}
     = \sum_{i=1}^{d}  \left[ h_{i}^{(n)}+\sum_{j=1}^{d}h_{j}^{(n)} f_{i}^{(n)}   \right] \\
     &= & |Q_{n}| + \sum_{i=1}^{d} f_{i}^{(n)} \sum_{j=1}^{d} h_{j}^{(n)}
 = |Q_{n}|(1+\sum_{i=1}^{d}f_{i}^{(n)}).
  \end{eqnarray*}

\end{proof}

Next we consider conditions for a Bratteli diagram $B$ of strict rank $d$ to support a perfect ordering $\om$ such that $(X_B, \varphi_\om)$ is  an odometer.
Suppose  that we are given a
skeleton ${\mathcal F}$ on $B$: we have subsets $\wt{V}$ and
$\ol{V}$ of $V$,  both of cardinality $k\leq d$, a bijection $\sigma: \wt V \rightarrow \ol V$, and partitions $W'=\{W_{\ol v}' : \ol v \in \ol V\}$ and $W=\{W_{\wt v} : \wt v \in \wt V\}$ of $V$.
Let
$\mathcal H= (T,P)$ be the directed graph associated to $\mathcal
F$. We assume that $\mathcal H$ is strongly connected.  Let $p$ be
a finite path in ${\mathcal H}$. Then $p$ can correspond
to several words in $V^{+}= \{v_{1},\ldots v_{d}\}^{+}$:
for example if $p$ starts at vertex $[\ol v, \wt v]$,
then it corresponds to words starting with
$v$ whenever $v\in [\ol v, \wt v] .$ If $w$
 is a word in $V^{+}$ then we write $w=\ldots v$ to
mean that $w$ ends with $v$, and $w = v \ldots$ to mean that $w$ starts
with $v$.
It is not difficult to find  words $w\, \in V^{+}$  corresponding to a path in $\mathcal H$ such that
\begin{enumerate} \label{periodic_1}
\item
$w$ contains all $v_{i}$'s,
\item
$w^{2}$ corresponds to a legitimate path in $\mathcal H$, and
\item
for each $\wt v \in \wt V$, if $\sigma(\wt v) = \ol v$,  there exist words $p(\wt v)=\ldots \wt v$ and $s(\ol v)=\ol v \ldots $ such that
$w= p(\wt v)\, s(\ol v)$.
\end{enumerate}
Call  a word which satisfies (1) - (3) $\sigma$-{\em decomposable}.
If $w$ is a word, let $\overrightarrow{w}$ be the $d$-dimensional vector whose $i$-th entry is the number of occurrences of $v_{i}\, \in V$.

The following result generalizes Proposition \ref{d max/min paths},
and gives the constraints on the sequence $(F_{n})$ of transition
matrices that a diagram $B$ has in order for $B$ to support an
odometer with a periodic language.

\begin{proposition}
\label{odometers}
Let $B$ be a simple, strict rank $d$ Bratteli diagram.
Suppose that  $\mathcal F$ is a skeleton such that the associated graph $\mathcal H$ is strongly connected,
and let $w$ be a $\sigma$-decomposable word which corresponds to a path in $\mathcal H$.  Let $\{p_{v}^{(n)}\}_{v \in V,
n\geq 1}$ be a set of nonnegative integers.
If the
incidence matrices $(F_{n})$ of $B$ are such that the $v$-th row of $F_n$ is
\begin{equation}\label{row_structure}
\overrightarrow{s(\ol v)} + p_{v}^{(n)}\overrightarrow{w} + \overrightarrow{p(\wt v)}
\end{equation}
  whenever $v\in [\ol v , \wt v]$; then $(X_{B}, \varphi_{\om})$ is topologically conjugate to an
 odometer.
\end{proposition}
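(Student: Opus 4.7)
The plan is to construct an explicit perfect ordering $\om$ on $B$ whose skeleton equals $\mathcal F$ and whose language $\mathcal L_{B,\om}$ is periodic with period $w$. Once this is done, Proposition \ref{adic_odometers} immediately yields that $(X_B,\varphi_\om)$ is topologically conjugate to an odometer.

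First I would build $\om$. For each $n\geq 2$ and each $v\in V_n$ with $v\in[\ol v,\wt v]$, define
\[
w(v,n-1,n) := s(\ol v)\, w^{p_v^{(n)}}\, p(\wt v).
\]
The length of this word is $|s(\ol v)|+p_v^{(n)}|w|+|p(\wt v)|$, which, by hypothesis on the $v$-th row of $F_n$, equals $|r^{-1}(v)|$; and the vector of letter-frequencies of $w(v,n-1,n)$ coincides with this row, so each edge of $r^{-1}(v)$ can be matched to a letter of the word, yielding a linear order on $r^{-1}(v)$. Because $s(\ol v)$ starts with $\ol v$ and $p(\wt v)$ ends with $\wt v$, the induced skeleton $\mathcal F_\om$ equals $\mathcal F$. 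Next I would verify that $\om$ is perfect by checking that for every $n\geq 2$ the word $w(v,n-1,n)$ corresponds to a path in the associated graph $\mathcal H$: the internal transitions inside $s(\ol v)$, inside $p(\wt v)$, and inside any copy of $w$ are valid by assumption; the junctions between $s(\ol v)$ and the first $w$, between successive copies of $w$, and between the final $w$ and $p(\wt v)$ are all subwords of $w^2$, hence valid by condition (2) of $\sigma$-decomposability. Using the recursion $w(v,k,n)=\prod_j w(u_j,k,n-1)$ where $w(v,n-1,n)=u_1\ldots u_\ell$, an induction extends this to every $w(v,k,n)$, and Lemma \ref{V-H-correspondence} then ensures $\om$ is perfect with $\varphi_\om(M_{\wt v}) = m_{\sigma(\wt v)}$.

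Finally I would prove that $\mathcal L_{B,\om}$ is periodic with period $w$, by induction on $n-k$ that
\[
w(v,k,n) = S_v^{(k,n)}\, w^{\alpha_v^{(k,n)}}\, P_v^{(k,n)},
\]
with $S_v^{(k,n)}$ a suffix of $w$ and $P_v^{(k,n)}$ a prefix of $w$. The case $n-k=1$ is the definition, with $S_v^{(n-1,n)}=s(\ol v)$ (a suffix of $w$ via the decomposition $w=p(\sigma^{-1}(\ol v))\,s(\ol v)$) and $P_v^{(n-1,n)}=p(\wt v)$. For the inductive step, the main point is that at each junction between consecutive factors $w(u_j,k,n-1)$ and $w(u_{j+1},k,n-1)$ of $w(v,k,n)$, the tail $P_{u_j}^{(k,n-1)}$ and the head $S_{u_{j+1}}^{(k,n-1)}$ --- having been inherited from the base case --- are of the form $p(\wt v_{u_j})$ and $s(\ol v_{u_{j+1}})$ respectively; $\mathcal H$-validity of the transition $u_j u_{j+1}$ forces $\ol v_{u_{j+1}}=\sigma(\wt v_{u_j})$, so by $\sigma$-decomposability $P_{u_j}^{(k,n-1)} S_{u_{j+1}}^{(k,n-1)} = w$. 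Thus each internal junction telescopes into one more copy of $w$, leaving only the first $S_{u_1}^{(k,n-1)}$ and the last $P_{u_\ell}^{(k,n-1)}$ outside the $w^{\alpha}$ block, which gives the claimed form.

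The main obstacle is precisely the inductive bookkeeping that ensures the pieces $S_v^{(k,n)}$ and $P_v^{(k,n)}$ remain exact prefixes/suffixes of the fixed word $w$ at every recursion level --- this is where the global compatibility of the $\sigma$-decomposition of $w$ with the edge structure of $\mathcal H$ is essential. Granting this, Proposition \ref{adic_odometers} applied to the periodic language $\mathcal L_{B,\om}$ finishes the proof.
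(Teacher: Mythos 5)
Your proposal is correct and follows essentially the same route as the paper: define $w(v,n-1,n)=s(\ol v)\,w^{p_v^{(n)}}\,p(\wt v)$, use property (3) of $\sigma$-decomposability to telescope the junctions $p(\wt v)\,s(\sigma(\wt v))$ into copies of $w$, conclude that $\mathcal L_{B,\om}$ is periodic, and invoke Proposition \ref{adic_odometers}. Your explicit verification of perfectness via Lemma \ref{V-H-correspondence} and the inductive bookkeeping for $w(v,k,n)$ only spell out steps the paper leaves implicit.
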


\begin{proof}
Define, for $v\in [\ol v, \wt v]$, $w(v,n-1,n):= s(\ol
v)w^{p_{v}^{(n)}} p(\wt v)$. Note that the $v$-th row of $F^{(n)}$
is (\ref{row_structure}), and $(B, \om)$ has skeleton
$\mathcal F$. Now $\mathcal H$ tells us what words of length 2 are
allowed in $\mathcal L_{B,\om}$: $vv' \in \mathcal L_{B,\om}$  only if
$v\in [\ol v,\wt v]$, $v'\in [\ol v', \wt v']$, and
$\sigma (\wt v) = \ol v'$. Thus
\[ w(v,n-1, n)w(v',n-1,n) = s(\ol
v )w^{p_{v}^{(n)}} p(\wt v)\,\,\, s(\ol v')w^{p_{v'}^{(n)}}
p(\wt v') = s(\ol v) w^{p_{v}^{(n)}} w w^{p_{v'}^{(n)}} P_{\wt
v'}\,\] by property (3) of a $\sigma$-decomposable word. Since
$w(v,n-1,n+1)$ (and more generally, $w(v, n-1, N)$) is a concatenation
of words $w(v,n-1,n)$, this implies that ${\mathcal L}_{B,\om}$ is periodic.
Proposition \ref{adic_odometers} implies the desired result.

\end{proof}

There is a converse to this result:
namely that if a perfect order $\om$ on  a simple diagram $B$ has a periodic language, then there is
some $\sigma$-decomposable word which generates $\mathcal L(B,\om)$, so
that by Lemma \ref{adic_odometers}, $(X_{B},\varphi_{\om})$ is an odometer.

If  $V = \{v_{1}, v_{2}, \ldots v_{d}\}$ and a perfect $\om$ is to have
$d$ maximal paths, then Proposition \ref{d max/min paths} tells us that
$v_{1}v_{2}\ldots v_{d}$ is, up to rotation, the only
$\sigma$-decomposable word.
 The next example shows that in general $\sigma$-decomposable words are easy to find.

\begin{example}
Let $V = \{a_{1}, a_{2}, \ldots a_{n+1}\}$, $\ol V = \wt V =
\{a_{1},a_{2}, \ldots a_{n}\}$, $\sigma (a_{i}) = a_{i+1}$ for
$i<n$ and $\sigma (a_{n}) = a_{1}$, where $[a_i, a_i]= \{a_{i}\}$ for each $i$ and  $a_{n+1} \, \in [a_i, a_j]$ for
some $j\neq i$. Then any word starting with $a_{i}$ (for $1 \leq i
\leq n$), ending with $\sigma^{-1}(a_{i})$, and containing all $a_{i}$'s
 is $\sigma$-decomposable.

\end{example}

\section{A characterization of finite rank diagrams that support perfect, non-proper orders}\label{characterization}

In this section, which is built on the results  of Section \ref{language_skeleton}, we discuss  the question of under what conditions a simple
 rank $d$ Bratteli diagram $B$ can have a perfect ordering $\om$ belonging to $\mathcal O_B(k)$ for
$ 1<  k \leq d$. It turns out
that the incidence matrices must satisfy certain conditions, which in turn depend on the skeleton that one is considering.

Let $(B,\om)$ be a perfectly  ordered simple Bratteli diagram. We continue to assume that $(B,\om)$ is well telescoped.
Let $\mathcal F = \mathcal F_\om$ be the skeleton generated by $\om$ and  let $\varphi = \varphi_\om$ be the corresponding Vershik
map. We have $|\wt V| = |\ol V|$ and $\varphi_\om$ defines a
one-to-one map $\sigma : \wt V \to \ol V$ such that $\varphi_{\om} (M_v) = m_{\sigma(v)}$ for $ v \in \wt V$.
Recall also the two partitions $W = \{W_{\wt v} : v \in \wt V\}$ and $W' = \{W'_{\ol v} : v \in \ol V\}$ of $V$ generated by $\mathcal F$.

We need some new notation. Recall that we write $\wt V_n$ ($\ol V_n$) instead of just $\wt V$ ($\ol V$) if we need to specify  in which level $\wt V$ ($\ol V$) lies. Let $E(V_n, u)$ be the set of all finite paths
between vertices of level $n$ and a vertex $u\in V_m$ where $m >
n$. The symbols $\wt e(V_n, u)$ and $\ol e(V_n, u)$ are used to denote
the maximal and minimal finite paths in $E(V_n, u)$, respectively. By $\wt V_n$ we mean that we are looking at the set $\wt V$ of vertices at level $n$.
Fix
maximal and minimal vertices $\wt v$ and $\ol v$ in $\wt V_{n-1}$ and $\ol V_{n-1}$ respectively. Denote $E(W_{\wt
v}, u) = \{e \in E(V_n, u): s(e) \in W_{\wt v}, r(e) = u\}$ and
$\wt E(W_{\wt v}, u) = E(W_{\wt v}, u) \setminus \{\wt e(V_n,
u)\}$. Similarly, $\ol E(W'_{\ol v}, u) = E(W'_{\ol v}, u)
\setminus \{\ol e(V_n, u)\}$. Clearly, the sets $\{E(W_{\wt v}, u)
: \wt v \in \wt V\}$ form a partition of $E(V_n, u)$.
Let $e$ be a non-maximal finite path, with  $r(e)=v$ and $s(e)\in V_m$,  which determines the cylinder set $U(e)$.  By $\varphi_\om(e)$ we mean $\varphi_\om (U(e))$, the image, under $\varphi_\om(e)$, of the cylinder set $U(e)$, which also has range $v$, and source in $V_m$.

\begin{lemma}\label{successor} Let $(B, \om)$ be a perfectly  ordered, well telescoped finite rank simple diagram, where $\om$  has  skeleton  $\mathcal F_\om$ and permutation $\sigma:\wt V \rightarrow \ol V$.
If $n> 1$, $\wt v\in \wt V_{n-1}$ and  $u\in V_m\ (m> n)$, then for any finite path $e \in \wt E(W_{\wt v}, u)$ we have $\varphi_\om(e) \in \ol E(W'_{\sigma(\wt v)}, u)$.
\end{lemma}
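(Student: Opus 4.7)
My plan is to interpret $\varphi_\om(e)$ concretely as a new finite path in $E(V_n,u)$ and then verify, separately, that its source lies in $W'_{\sigma(\wt v)}$ and that it differs from $\ol e(V_n,u)$.

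Let $i\in\{1,\ldots,p\}$ be the smallest index for which $e_i$ fails to be maximal in $r^{-1}(r(e_i))$; this exists because $e\ne\wt e(V_n,u)$. I will first check that $s(e_i)\in W_{\wt v}$. For $i=1$ this is the hypothesis. For $i\ge 2$ each of $e_1,\ldots,e_{i-1}$ is maximal, so its source lies in $\wt V$ by condition (4) of well-telescoping. Combined with $s(e_1)\in W_{\wt v}$ and the fact that $\wt V\cap W_{\wt v}=\{\wt v\}$, this forces $s(e_1)=\wt v$; propagating this along the maximal chain $e_1,\ldots,e_{i-1}$ gives $s(e_j)=\wt v$ for $j\le i-1$, and then $e_{i-1}=\wt e_{s(e_i)}$ gives $\wt v(s(e_i))=\wt v$, i.e.\ $s(e_i)\in W_{\wt v}$.

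Next, I extend $e$ to an infinite path $y$ by prepending the maximal finite path from $v_0$ to $s(e_1)$ and attaching an arbitrary tail at $u$. Since $s(e_1)\in W_{\wt v}$, this prepended segment coincides with $M_{\wt v}$ on levels $1,\ldots,n-1$, so $y$ is maximal through level $n+i-1$ and non-maximal at level $n+i$. The Vershik rule (Definition \ref{VershikMap}) therefore gives, as a finite path in $E(V_n,u)$,
\[\varphi_\om(e)=(m_{n+1},\ldots,m_{n+i-1},\overline{e_i},e_{i+1},\ldots,e_p),\]
where $(m_1,\ldots,m_{n+i-1})$ is the minimal path from $v_0$ to $s(\overline{e_i})$. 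Because condition (4) of well-telescoping gives $s(\ol e_v)\in \ol V$ and the assignment $v\mapsto s(\ol e_v)$ restricts to the identity on $\ol V$, a short induction from level $n+i-1$ downwards shows that this minimal path is vertical through $\ol v(s(\overline{e_i}))\in\ol V$ at every level between $1$ and $n+i-2$. In particular $s(\varphi_\om(e))=s(m_{n+1})=\ol v(s(\overline{e_i}))$.

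The crux---and the step I expect to be the main obstacle---is to identify this vertex with $\sigma(\wt v)$, for which I will invoke Remark \ref{telescoping_assumption}. Since $\overline{e_i}$ is the successor of $e_i$ in $r^{-1}(r(e_i))$, the letters $s(e_i)$ and $s(\overline{e_i})$ are adjacent in $w(r(e_i),n+i-1,n+i)$. Expanding once more, the block $w(s(e_i),n+i-2,n+i-1)\,w(s(\overline{e_i}),n+i-2,n+i-1)$ occurs as a subword of $w(r(e_i),n+i-2,n+i)$. The first factor ends with $\wt v(s(e_i))=\wt v$ by Step 1, while the second begins with $\ol v(s(\overline{e_i}))\in\ol V$. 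Thus $\wt v\,\ol v(s(\overline{e_i}))$ appears as a subword of $w(r(e_i),n+i-2,n+i)$ at level $n+i-2\ge 1$ (using $n\ge 2$ and $i\ge 1$), and Remark \ref{telescoping_assumption} forces $\ol v(s(\overline{e_i}))=\sigma(\wt v)$. Because $\sigma(\wt v)\in\ol V$ is fixed by the map $v\mapsto s(\ol e_v)$, it lies in $W'_{\sigma(\wt v)}$, so $s(\varphi_\om(e))\in W'_{\sigma(\wt v)}$. Finally, $\overline{e_i}$, being the successor of the non-maximal $e_i$, is not the minimal edge of $r^{-1}(r(e_i))$, whence $\varphi_\om(e)\ne\ol e(V_n,u)$. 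Combining these observations yields $\varphi_\om(e)\in\ol E(W'_{\sigma(\wt v)},u)$.
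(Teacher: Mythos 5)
Your argument is correct, and it ultimately rests on the same key mechanism as the paper's proof: an occurrence of $\wt v\,\ol v$ in some word $w(\cdot,k,\cdot)$ with $k\geq 1$ forces $\ol v=\sigma(\wt v)$ via Remark \ref{telescoping_assumption} (condition (5) of well-telescoping). The difference is where you produce that occurrence. The paper argues globally at level $n-1$: since $e$ and $\varphi_\om(e)$ are consecutive in $E(V_n,u)$, the letters $s(e)$ and $s(\varphi_\om(e))$ are adjacent in $w(u,n,m)$; expanding one level, the last letter of $w(s(e),n-1,n)$ is $\wt v$ (because $s(e)\in W_{\wt v}$) and the first letter of $w(s(\varphi_\om(e)),n-1,n)$ is the $\ol v$ with $s(\varphi_\om(e))\in W'_{\ol v}$, so $\wt v\,\ol v\subset w(u,n-1,m)$ and the telescoping assumption finishes the proof in a few lines, with no need to locate the first non-maximal edge, to propagate along the maximal chain (your Step 1), or to check verticality of the minimal prefix. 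Your local analysis at the first non-maximal edge $e_i$ produces the same kind of occurrence one level below $e_i$, at the cost of this extra (correctly executed) bookkeeping; what it buys is an explicit description of the successor path, which the short argument never needs. One small wrinkle: when $i=1$ the minimal prefix in your displayed formula is empty, so $s(\varphi_\om(e))=s(\ol{e_1})$, and the identification ``$s(\varphi_\om(e))=s(m_{n+1})=\ol v(s(\ol{e_i}))$'' together with the closing appeal to $\sigma(\wt v)$ being fixed by $v\mapsto s(\ol e_v)$ does not apply; but in that case your crux step already gives $\ol v(s(\ol{e_1}))=\sigma(\wt v)$, which says precisely that $s(\ol{e_1})\in W'_{\sigma(\wt v)}$ by the definition of $W'$, so the conclusion survives after this trivial case split.
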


\begin{proof}
Note that $s(e)s(\varphi_\om(e))$ is a subword of $w(u,n,m)$. Now $s(e) \in W_{\wt v}$ by assumption and $s(\varphi_\om(e))\in W_{\ol v}$ for some $\ol v$.
This implies that $\wt v \ol v$ is a sub word of $w(u,n-1,m)$. Recalling that $(B,\om)$ is telescoped,  the result follows.

\end{proof}

We immediately deduce from the previous lemma that the following result on entries of incidence matrices is true.
 \begin{corollary}\label{necessary condition}
In the notation of Lemma \ref{successor}, the following condition holds for the perfectly ordered, well telescoped  finite rank  simple diagram $(B, \om)$:  for any $n \geq 2$, any vertex $\wt v \in \wt V_{n-1}$,  $m >n$, and any $u \in V_m$ one has
$$
|\wt E(W_{\wt v}, u)| = |\ol E(W'_{\sigma(\wt v)}, u)|.
$$
\end{corollary}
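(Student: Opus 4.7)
Proof plan: The statement will follow almost at once from Lemma \ref{successor}, together with a double counting argument that pits the term-by-term inequality it supplies against the common total obtained by partitioning $E(V_n,u)$ in two different ways.

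The first step is to upgrade Lemma \ref{successor} to a cardinality inequality. For fixed $\wt v \in \wt V_{n-1}$, the lemma gives a map $e \mapsto \varphi_\om(e)$ from $\wt E(W_{\wt v}, u)$ into $\ol E(W'_{\sigma(\wt v)}, u)$. This map is injective: the cylinder sets $U(\widetilde{e}\,,e)$ (with $\widetilde{e}$ ranging over $E(v_0, s(e))$) determined by distinct $e\in E(V_n,u)$ form disjoint clopen sets in $X_B$, and $\varphi_\om$ is a homeomorphism, so distinct finite paths $e$ have distinct successor paths $\varphi_\om(e)$. Consequently
\[
|\wt E(W_{\wt v}, u)| \le |\ol E(W'_{\sigma(\wt v)}, u)| \qquad \text{for every } \wt v \in \wt V_{n-1}.
\]

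The second step is the counting. Since $W = \{W_{\wt v} : \wt v \in \wt V_{n-1}\}$ is a partition of $V_n$, the family $\{E(W_{\wt v}, u) : \wt v \in \wt V_{n-1}\}$ is a partition of $E(V_n,u)$; removing the unique overall maximal path $\wt e(V_n,u)$ (which lies in exactly one block) yields the partition $\{\wt E(W_{\wt v}, u) : \wt v \in \wt V_{n-1}\}$ of $E(V_n,u)\setminus\{\wt e(V_n,u)\}$. The same reasoning with $W'$ gives a partition $\{\ol E(W'_{\ol v}, u) : \ol v \in \ol V_{n-1}\}$ of $E(V_n,u)\setminus\{\ol e(V_n,u)\}$. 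Thus
\[
\sum_{\wt v \in \wt V_{n-1}} |\wt E(W_{\wt v}, u)| \;=\; |E(V_n,u)| - 1 \;=\; \sum_{\ol v \in \ol V_{n-1}} |\ol E(W'_{\ol v}, u)|.
\]

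Finally, since $\sigma : \wt V_{n-1} \to \ol V_{n-1}$ is a bijection, reindexing the right-hand sum gives
\[
\sum_{\wt v \in \wt V_{n-1}} |\wt E(W_{\wt v}, u)| \;=\; \sum_{\wt v \in \wt V_{n-1}} |\ol E(W'_{\sigma(\wt v)}, u)|,
\]
so that term-by-term inequality plus equality of totals forces equality in each summand, which is the desired identity. No step poses a genuine obstacle; the only point requiring care is bookkeeping the block in which the removed extremal path lies, but this affects a single summand on each side and is absorbed by the overall count.
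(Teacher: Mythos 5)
Your proof is correct and is essentially the deduction the paper intends: the corollary is stated as an immediate consequence of Lemma \ref{successor}, and your combination of injectivity of the successor map on $E(V_n,u)$ with the double count over the two partitions $\{\wt E(W_{\wt v},u)\}_{\wt v\in \wt V}$ and $\{\ol E(W'_{\ol v},u)\}_{\ol v\in \ol V}$ of $E(V_n,u)$ minus one extremal path each (reindexed via the bijection $\sigma$) is exactly the standard way to make that ``immediate'' step precise. The only cosmetic point is that injectivity of $e\mapsto\varphi_\om(e)$ is most cleanly seen as uniqueness of the predecessor in the linear order on $E(V_n,u)$, rather than via images of cylinder sets, but this does not affect the argument.
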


In particular, if $B$ is as above, and $(F_n) = ((f_{v,w}^{(n)}))$
denotes the sequence of positive
incidence matrices for $B$, then we can
apply Corollary \ref{necessary condition} to obtain the following
property on $F_n$. Define two sequences of matrices $\wt F_n = (\wt
f_{w,v}^{(n)})$ and $\ol F_n = (\ol f_{w,v}^{(n)})$ by the following
rule (here $w\in V_{n+1}$, $ v\in V_n$ and $ \ n\geq 1$):
\begin{equation}\label{wt f}
\wt f_{w,v}^{(n)} = \left\{
                      \begin{array}{ll}
                        f_{w,v}^{(n)} - 1, & \mbox{ if }\wt e_w \in E(v, w); \\
                        f_{w,v}^{(n)}, & \mbox{otherwise},
                      \end{array}
                    \right.
\end{equation}
\begin{equation}\label{ol f}
\ol f_{w,v}^{(n)} = \left\{
                      \begin{array}{ll}
                        f_{w,v}^{(n)} - 1, & \mbox{ if }\ol e_w \in E(v, w); \\
                        f_{w,v}^{(n)}, & \mbox{otherwise}.
                      \end{array}
                    \right.
\end{equation}
Then for any $u\in V_{n+1}$ and $ \wt v\in \wt V_{n-1}$, we obtain that under
the conditions of Corollary \ref{necessary condition} the entries of incidence matrices have the property:
\begin{equation}\label{sum condition1}
\sum_{w\in W_{\wt v}} \wt f_{u,w}^{(n)} \ \  = \sum_{w'\in W'_{\sigma(\wt v)}} \ol f_{u,w'}^{(n)},\ \ n \geq 2.
\end{equation}
We call  relations  (\ref{sum condition1}) the {\em balance relations}.

Given  $(\mathcal F, \sigma)$ on $B$, is it sufficient for $B$ to satisfy the balance relations so that there is a perfect order on $B$ with associated skeleton and permutation $(\mathcal F, \sigma)$? Almost. We need one extra condition on $B$.
First
  we need finer notation for $\mathcal H$: we replace it with a sequence $(\mathcal H_n)$ where each $\mathcal H_n$ looks exactly the same as $\mathcal H$, except that the vertices $T_n$ of $\mathcal H_n$ are labeled $[\ol v, \wt v, n]$. Paths in $\mathcal H_n$ will correspond to words from $V_n$, in particular, the word $w(u,n,n+1)$ will correspond to a path in $\mathcal H_n$. (In the case where $B$ is a stationary diagram, there is no need to replace $\mathcal H$ with $(\mathcal H_n)$.)

   \begin{definition}Fix $n\in \N$ and $u\in V_{n+1}$. If
  $[\ol v, \wt v,n] \in \mathcal H_n$, we define  the {\em crossing number} $P_u([\ol
    v, \wt v,n])$ for the vertex $[\ol v ,\wt v,n]$ as
\[ P_u([\ol
  v, \wt v,n]) :=\sum_{w\in[\ol v,\wt v, n]}\wt f_{uw}^{(n)} . \]
   \end{definition}
  This crossing number represents the
  number of times that we will have to pass through the vertex $[\ol
    v,\wt v,n]$ when we define an order on $r^{-1}(u)$, for $u\in V_{n+1}$, and here we
  emphasize that if we terminate at $[\ol v,\wt v,n]$ , we do not
  consider this final visit as contributing to the crossing number -
  this is why we use the terms $\wt f_{u,w}^{(n)}$, and not
  $f_{u,w}^{(n)}$.
  \begin{definition}
  We say that $\mathcal H_n$ is {\em positively strongly
    connected} if for each $u\in V_{n+1}$, the set of vertices $\{ [\ol v,
    \wt v,n]: P_u([\ol v,\wt v,n])>0 \}$, along with all the relevant
  edges of $\mathcal H_n$, form a strongly connected subgraph of
  $\mathcal H_n$.
  \end{definition}
  If $s(\wt e_u) \in [\ol v, \wt v,n]$ we shall call  this
  vertex in $\mathcal H_n$ the {\em terminal vertex}  (for $u$), as when defining
  the order on $r^{-1}(u)$, we need a path that ends at this vertex
  (although it can previously go through this vertex several times - in
  fact precisely $P_u([\ol v, \wt v,n])$ times).

\begin{example} In this example we have a stationary diagram so we  drop the dependance on $n$.
Suppose that $V=\{a,b,c,d\}$, $\ol V = \wt V = \{a,b,c\}$, with $a \in [a,a]$, $b\in [b,b]$, $c\in [c,c]$ and $d\in [b,a]$. Let $\sigma(a)=b$, $\sigma(b)=c$ and $\sigma( c )=a$. Suppose that for each $n\geq 1$ the incidence matrix $F=F_n$ is
\[F:=
\left(
\begin{array}{cccc}
2& 1 & 1 &  1 \\
1& 2 & 1  & 1 \\
1 & 1 & 2  & 1 \\
1  & 1  &1 & 2  \\
 \end{array}
\right)
\]
Then if $u=d$, $P_d([a,a])=0$, and the remaining three vertices $[b,b]$, $[c,c]$ and $[b,a]$ do not form a strongly connected subgraph of $\mathcal H$:   there is no path from $[c,c]$ to $[b,a]$.

Note also that although the rows of this incidence matrix satisfy the balance relations  (\ref{sum condition1}), there is no way to define an order on $r^{-1}(d)$ so that the resulting global order is perfect. The lack of positive strong connectivity of the graph $\mathcal H$ is precisely the impediment.
\end{example}

The following result shows that, given a skeleton $\mathcal F$ on $B$, as long as the associated graphs $(\mathcal H_n)$ are eventually positively strongly connected,  the balance relations are sufficient to define a perfect ordering $\om$ on a simple Bratteli diagram.

\begin{theorem}\label{existence of good order}
Let $B$ be a simple strict rank $d$ Bratteli diagram,  let $\mathcal F = \{M_{\wt v}, m_{\ol v}, \wt e_w, \ol e_w: w \in V^* \backslash V_0,\,\, \wt v \in \wt V\mbox{  and } \ol v \in \ol V  \}$ be a skeleton on $B$, and let $\sigma : \wt V \to \ol V$ be a bijection.  Suppose that eventually all associated graphs $\mathcal H_n$ are positively strongly connected, and suppose that the entries of incidence matrices $(F_n)$ eventually satisfy the balance relations (\ref{sum condition1}).
Then there is a perfect ordering $\om$ on $B$ such that $\mathcal F = \mathcal F_\om$ and the Vershik map $\varphi_\om$ satisfies the relation $\varphi_\om(M_{\wt v}) = m_{\sigma(\wt v)}$.
\end{theorem}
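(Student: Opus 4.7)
The plan is to build $\om$ level by level. For each $n$ large enough that $\mathcal H_n$ is positively strongly connected and the balance relations hold, and for each $u \in V_{n+1}$, I must prescribe a linear order on $r^{-1}(u)$ with $\wt e_u$ maximal and $\ol e_u$ minimal (as specified by $\mathcal F$). Such an order is equivalent to choosing a word $w(u,n,n+1)$ of length $|r^{-1}(u)|$ in which each $w \in V_n$ appears $f_{u,w}^{(n)}$ times and which begins with $s(\ol e_u)$ and ends with $s(\wt e_u)$. If, in addition, every such word corresponds to a walk in $\mathcal H_n$, then Lemma~\ref{V-H-correspondence} delivers the desired perfect ordering $\om$ with $\mathcal F_\om = \mathcal F$ and $\varphi_\om(M_{\wt v}) = m_{\sigma(\wt v)}$; the orderings on finitely many initial levels (where the hypotheses are not yet assumed) may be chosen arbitrarily consistent with $\mathcal F$, since modifying finitely many vertex orderings preserves perfection. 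Thus, for each qualifying $u$, the task reduces to finding a walk $\pi_u$ in $\mathcal H_n$ that starts at the class $t_s \ni s(\ol e_u)$, ends at the class $t_e \ni s(\wt e_u)$, and visits each class $[\ol v,\wt v]$ exactly $\sum_{w \in [\ol v,\wt v]} f_{u,w}^{(n)}$ times.

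I would recast the existence of $\pi_u$ as an Eulerian-trail problem in a directed multigraph $M_u$ whose edges are drawn from $E(\mathcal H_n)$ with nonnegative integer multiplicities. A short bookkeeping check shows that the required out- and in-degrees at each vertex $[\ol v,\wt v]$ are
\begin{equation*}
d^+([\ol v,\wt v]) \;=\; \sum_{w\in[\ol v,\wt v]} \wt f_{u,w}^{(n)} \;=\; P_u([\ol v,\wt v]), \qquad d^-([\ol v,\wt v]) \;=\; \sum_{w\in[\ol v,\wt v]} \ol f_{u,w}^{(n)},
\end{equation*}
the $-1$ built into $\wt f$ (resp.\ $\ol f$) automatically encoding the fact that the walk terminates at $t_e$ (resp.\ initiates at $t_s$). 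Since every edge of $\mathcal H_n$ out of a vertex with second coordinate $\wt v$ arrives at a vertex with first coordinate $\sigma(\wt v)$, the existence of nonnegative integer multiplicities realising the prescribed degrees is a directed transportation problem whose feasibility is precisely the balance relations~(\ref{sum condition1}); a degree-feasible $M_u$ therefore always exists.

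To promote a degree-feasible $M_u$ into one admitting an Eulerian trail from $t_s$ to $t_e$, I would appeal to the classical directed Euler theorem: the degree identities automatically give $d^+(v) = d^-(v)$ for $v \neq t_s, t_e$ and $d^+(t_s) - d^-(t_s) = 1 = d^-(t_e) - d^+(t_e)$, so the only remaining requirement is that $M_u$ be weakly connected on its support. This is where positive strong connectivity of $\mathcal H_n$ enters: the support of $M_u$ lies in $\{[\ol v,\wt v] : P_u([\ol v,\wt v])>0\} \cup \{t_s, t_e\}$, the subgraph of $\mathcal H_n$ spanned by $\{P_u > 0\}$ is strongly connected by hypothesis, and the forced incoming (resp.\ outgoing) edges at $t_e$ (resp.\ $t_s$) attach those two vertices to the strongly connected core. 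A cycle-swap argument then takes care of the connectivity: whenever $M_u$ splits into two weak components, strong connectivity of $\mathcal H_n^+$ supplies a pair of edges---one from each component---whose endpoints can be swapped to reduce the number of components while preserving every in- and out-degree. Iterating yields a weakly connected $M_u$ and hence the required Eulerian trail.

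The main obstacle is exactly this connectivity-augmenting step: the balance relations alone do not force a degree-feasible $M_u$ to be connected, and positive strong connectivity is genuinely needed to push a disconnected solution into a connected one without disturbing the prescribed degrees. Once $\pi_u$ is in hand, converting it back into the word $w(u,n,n+1)$ is mechanical---traverse $\pi_u$ and, at each visit to a class $[\ol v,\wt v]$, assign an as-yet-unused vertex $w \in [\ol v,\wt v]$, reserving $s(\ol e_u)$ for the first letter and $s(\wt e_u)$ for the last. Carrying out this construction for every $u$ at every sufficiently large level produces $\om$, and then Lemma~\ref{V-H-correspondence} delivers the conclusion.
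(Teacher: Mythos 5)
Your overall strategy is sound and genuinely different from the paper's: for each $u\in V_{n+1}$ you recast the order on $r^{-1}(u)$ as an Eulerian trail in a multigraph $M_u$ supported on the edges of $\mathcal H_n$, with prescribed degrees $d^+([\ol v,\wt v])=\sum_{w\in[\ol v,\wt v]}\wt f^{(n)}_{u,w}$ and $d^-([\ol v,\wt v])=\sum_{w\in[\ol v,\wt v]}\ol f^{(n)}_{u,w}$, and your degree-feasibility claim is correct: since every $\mathcal H_n$-edge leaving a class with second coordinate $\wt v$ ends at a class with first coordinate $\sigma(\wt v)$, the multiplicities decompose into independent complete-bipartite transportation blocks, one for each $\wt v$, whose feasibility is exactly the balance relation (\ref{sum condition1}) for that $\wt v$. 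The paper proceeds quite differently: it builds the word $w(u,n,n+1)$ greedily, edge by edge (always moving to the class carrying the largest remaining entry, with a special rule for traversing loops), and verifies that the balance relations are preserved after every step, the Eulerian path in $\mathcal H_n$ appearing only as a by-product. Your use of Lemma \ref{V-H-correspondence} at the end matches the paper's.

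The gap is the cycle-swap step, which you assert rather than prove, and which as stated is false: one cannot swap an arbitrary pair of edges taken from two different components, because the two new edges must again be edges of $\mathcal H_n$. Replacing $(a_1\to b_1)$ and $(a_2\to b_2)$ by $(a_1\to b_2)$ and $(a_2\to b_1)$ stays inside $\mathcal H_n$ if and only if $b_1$ and $b_2$ share their $\ol V$-coordinate (equivalently, $a_1$ and $a_2$ share their $\wt V$-coordinate), since the out-neighbourhood of $[\ol v,\wt v]$ in $\mathcal H_n$ is exactly the set of classes with first coordinate $\sigma(\wt v)$. So what you actually need is a pair of $M_u$-edges, in two distinct weak components, whose heads lie in a common row, and producing such a pair is where positive strong connectivity must be used: every $t$ with $P_u(t)>0$ carries an outgoing $M_u$-edge, so all such $t$ lie in the support; inside the strongly connected subgraph on $\{t:P_u(t)>0\}$ one can choose an $\mathcal H_n$-edge $p\to q$ whose endpoints lie in different components of $M_u$ and with $d^-_{M_u}(q)>0$ (only $t_s$ can have in-degree $0$, and one may always cross out of the component containing $t_s$); then any $M_u$-edge out of $p$ and any $M_u$-edge into $q$ have heads in the row $R_{\sigma(\wt v(p))}$ and give a legitimate swap. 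A second unaddressed point is that the swap must actually decrease the number of weak components: deleting the two old edges can split a component, and one needs the observation that every component not containing both $t_s$ and $t_e$ has balanced degrees, hence is strongly connected, hence survives the deletion of a single edge, so the swap should delete its edge from such a component. With these two points supplied your argument closes; without them, the step you yourself identify as the main obstacle is only a claim.
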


\begin{proof}  Fix $n$ large enough so that $\mathcal H_n$ is positively strongly connected and the balance relations hold.
Our goal is to define a linear order $\om_u$ on $r^{-1}(u)$ for each $u\in V_{n+1}$. Once this is done for all $n$ large,  the corresponding partial ordering $\om$ on $B$ will be  perfect. Recall that each set  $r^{-1}(u)$ contains two pre-selected edges $\wt e_u, \ol e_u$ and they should be the maximal and minimal edges after defining $\om_u$.

Fix $u \in V_{n+1}$. The proof is based on an recursive procedure that is applied to the $u$-th  row of the incidence matrix $F_n$. We  describe in detail the first step of the algorithm that will be applied repeatedly. At the end of each step in the algorithm, one entry in the $u$-th  row of  $F_n$ will have  been reduced by one, and a path in $\mathcal H_n$ will have been extended by one edge. At the end of the algorithm, the $u$-th row will have been reduced to the zero row, and a path will have been constructed in $\mathcal H_n$, starting at the vertex in $\mathcal H_n$ to which $s(\ol e_u)$ belongs, and ending at the vertex in $\mathcal H_n$ to which $s(\wt e_u)$ belongs. This path will determine the word $w(u,n,n+1)$, i.e. the order $\om_u$ on $r^{-1}(u)$.
It will be seen from the proof of the theorem that for given $\mathcal F$ and $\sigma$, the order $\om_u$ that is defined  is not unique.

We will first consider the particular case when the associated graph $ \mathcal H_n$ defined by $(\mathcal F, \sigma)$ does not have any loops. After that, we will modify the construction to include possible loops in the algorithm. We also include Examples \ref{3 max/min in rank 6} and \ref{looped_example} to illustrate why it is necessary to consider these cases.

 Case I: there is no loop in $\mathcal H_n$. Consider the $u$-th rows of matrices $\ol F_n$ and
 $\wt F_n$. They coincide with the row  $(f_{u,v_1}^{(n)},\ldots , f_{u, v_d}^{(n)})$ of the matrix
 $F_n$ except only one entry  corresponding to $|E(s(\ol e_u),
 u)|$ and one entry corresponding to  $|E(s(\wt e_u), u)|$. To simplify our notation, since $n$ is fixed we omit it
 as an index, so that $F=F_n$, $f_{u,w}= f_{u,w}^{(n)}$, $[\ol v, \wt v]=[\ol v, \wt v,n]$, $\mathcal H = \mathcal H_n$, etc.

 Take $\ol e_u$ and
 assign the number $0$ to it, i.e., $\ol e_u$ is the minimal edge in
 $\om_u$. Let $[\ol v_0, \wt v_0] $ be the vertex\footnote{The same word
 `vertex' is used in two meanings: for elements of the set $T$ of the
 graph $\mathcal H$ and for elements of the set $V$ of the Bratteli
 diagram $B$. To avoid any possible confusion, we point out explicitly
 what vertex is meant in that context.} of $\mathcal H$ such that
 $s(\ol e_u) \in [\ol v_0, \wt v_0]$. Consider the set $\{ \wt v
 \in \wt V : [\sigma(\wt v_0), \wt v] \in \mathcal H\}$ (this set is
 formed by ranges of arrows in $\mathcal H$ coming out from $[\ol
 v_0, \wt v_0]$).  Find $w'$ such that $\wt f_{u, w'} \geq \wt f_{u, w}$ for all
 entries $f_{u, w}, w \in W'_{\sigma(\wt v_0)}$. If there are several
 entries that are the maximal value, then $f_{u, w'}$ is chosen arbitrarily
 amongst them. Take any edge $e_1 \in E(w', u)$. In the case where  $\wt e_u \in
 E(w', u)$, we choose $e_1 \neq \wt e_u$. Assign the number $1$ to $e_1$
 so that $e_1$ becomes the successor of $e_0= \ol e_u$. We note also that the choice of $w'$ from $W'_{\sigma(v_0)}$ actually
 means that we take some $\wt v_1\in \wt V$ such that $s(e_1)\in
 [\sigma(\wt v_0), \wt v_1]$.
 In other words, we take the edge from $[\ol v_0, \wt v_0]$ to $[\sigma(\wt v_0), \wt v_1]$ in the associated graph $\mathcal H$.

We note that in the collection of relations (\ref{sum condition1}),
numerated by vertices from $\wt V$, we have worked with the equation
defined by $u$ and $\wt v_0$. Two edges were labeled in the above
procedure, $e_0$ and $e_1$. We may think of this step as if these
edges were `removed' from the set of all edges in $r^{-1}(u)$. We claim  that
the remaining non-enumerated edges satisfy the equation
\begin{equation}\label{sum condition for v_0}
(\sum_{w\in W_{\wt v_0}} \wt f_{u,w})  -1 \   = (\sum_{w\in W'_{\sigma(\wt v_0)}} \ol f_{u,w})  -1.
\end{equation}

To see this, note that
 $\wt v_1 \neq  \wt v_0$: for if not, then $\sigma(\wt v_1)=\sigma(\wt
 v_0)$, but this implies  that there would be a loop at $[\sigma(\wt
 v_0), \wt v_1]$, a contradiction to our assumption.
 Thus $\wt v_1\neq\wt
 v_0$ and this is why there is exactly one edge removed from each side
 of  (\ref{sum condition for v_0}).
 Note that we now have a  `new', reduced $u$-th row of $F$. Namely, the entry $f_{u,\ol v_0}$ has been reduced by one. Thus the crossing numbers of the vertices of $\mathcal H$ change (one crossing number is reduced by one). Also note that in $\mathcal H$, we have arrived at the vertex $[\sigma(\wt v_0), \wt v_1]$ to which $w'$ belongs. Thus for this reduced $u$-th row,
 $\ol {f}_{u,w'}= f_{u,w'} -1$. In other words,
 with each step of this algorithm the row we are working with changes, and the vertex $w$ such that $\ol f_{u,w}= f_{u,w} -1$ changes (in fact, has to change, because there are no loops in $\mathcal H$).  For, the vertex such that $\ol f_{u,w}= f_{u,w} -1$  belongs to the vertex in $\mathcal H$ where we  are currently, and this changes at every step of the algorithm.
 Thus the new reduced $u$-th row of $F$ still satisfies the balance relations (\ref{sum condition1})
as  $\wt v \in \wt V$ varies.
 This completes the first step of the construction.

We apply the described procedure again to show how we should proceed to complete the next step.
Let us assume that all crossing numbers ares still positive for the time being to describe the second step of the algorithm.

 Consider the set $\{f_{u,w} : w \in W'_{\sigma(\wt v_1)}\}$ and find some $w''$ such that $\wt f_{u,w''} \geq \wt f_{u,w}$ for any $w\in W'_{\sigma(\wt v_1)}$. In the corresponding set of edges $E(w'', u)$ we choose $e_2 \neq \wt e_u$, and assign the number $2$ to the edge $e_2$, so that $e_2$ is the successor of $e_1$.

Observe that now we are dealing with the relation of (\ref{sum
  condition1}) that is determined by $\wt v_1\in \wt V$. If we again
`remove' the enumerated edges $e_1$ and $e_2$ from it, then this
relation remains true with both sides reduced by $1$ as we saw the
same in (\ref{sum condition for v_0}).

We remark also that the choice that we made of $w''$ (or $e_2$) allows
us to continue the existing path (in fact, the edge) in $\mathcal H$
from $[\ol v_0, \wt v_0]$ to $[\sigma(\wt v_0), \wt v_1]$ with the edge from $[\sigma(\wt v_0), \wt v_1]$ to $[\sigma(\wt
v_1), \wt v_2]$, where $\wt v_2$ is defined by the property that
$s(e_2) \in [\sigma(\wt v_1) ,\wt v_2]$.

This process can be continued. At each step we apply the following rules:

(1) the edge $e_i$, that must be chosen next after $e_{i-1}$, is taken
from the set $E(w^*, u)$ where $w^*$ corresponds to a maximal entry
amongst $\wt f_{u,w}$ as $w$ runs over $W'_{\sigma(\wt v_{i-1})}$;

(2) the edge $e_i$ is always taken not equal to $\wt e_u$ unless no more
 edges except $\wt e_u$ are left.

After every step of the construction, we see that the following statements hold.

(i) Relations (\ref{sum condition1}) remain true when we treat them as the number of non-enumerated edges left in $r^{-1}(u)$. In other words, when a pair of vertices $\wt v$ and $\sigma(\wt v)$ is considered, we reduce by 1 each side of the equation defined by $\wt v$.

(ii) The used procedure allows us to build a path $p$ from the starting
vertex $[\ol v_0, \wt v_0]$ going through other vertices of the graph
$\mathcal H$ according to the choice we make at each step. We need to guarantee that at each step, we are able to move to a vertex in $\mathcal H$ whose crossing number is still positive (unless we are at the terminal stage). As long as the crossing numbers of vertices in  $\mathcal H$ are positive, there is no concern. Suppose thought that we land at a (non-terminal) vertex $[\ol v, \wt v]$ in $\mathcal H$  whose crossing number is one (and this is the first time this happens). When we leave this vertex, to go to $[\sigma(\wt v), \wt v' ]$,  the crossing number for $[\ol v, \wt v]$ will become 0 and therefore it will no longer be a vertex of $\mathcal H$ that we can `cross' through, maybe only arriving at it terminally. {\em Thus at this point, with each step, the graph $\mathcal H$ is also changing (being reduced).} We need to ensure that there is a way to continue the path out of $[\sigma(\wt v), \wt v' ]$.
 Since $\sum_{w\in W_{\wt v} } \wt f_{u,w}\geq      P_u[\sigma(\wt v), \wt v ]= 1$, then by the balance relations, $\sum_{w' \in W_{\sigma(\wt v')} } \ol f_{u,w'}\geq 1$. If the crossing number of all the vertices $[\sigma(\wt v'), *]$ have been reduced to 0, then this means that for a unique  $w'$, $ \ol f_{u,w'}= 1$ (the rest of the summands in $\sum_{w' \in W_{\sigma(\wt v')} } \ol f_{u,w'}$ equal 0),
 and $\wt f_{u,w'}= 1$:
 this tells us that we have to move into this terminal vertex for the last time.
Then the balance equations, which continue to be respected, ensure we are done. Otherwise,
 the balance equations guarantee that $\sum_{w' \in W_{\sigma (\wt v')}} \ol f_{u,w'}> 1$, which means there is a valid continuation of our path out of $[\sigma(\wt v), \wt v' ]$ and to a new vertex in $\mathcal H$, and we are not at the end of the path. It is these balance equations which always ensure that the path can be continued until it reaches its terminal vertex.

(iii) In accordance with (i), the $u$-th row of $F$ is transformed by a sequence
of steps in such a way that entries of the obtained rows form decreasing
 sequences. These entries show the number of non-enumerated edges
 remaining  after the completed steps. It is clear that, by the rule used above, we decrease the largest entries first. It follows from the simplicity of the diagram that, for sufficiently many steps, the set $\{s(e_i)\}$ will contain all vertices $v_1,..., v_d$ from $V$. This means that the transformed $u$-th row consists of entries which are strictly less than those of the very initial $u$-th row $F$. After a number of steps the $u$-th row will have a form where the difference between any two entries is $\pm 1$. After that, this property will remain true.

(iv) It follows from (iii) that we finally obtain that all entries of
 the resulting  $u$-th row are zeros or ones. We apply the same procedure to enumerate the remaining edges from $r^{-1}(u)$ such that the number $|r^{-1}(u)| - 1$ is assigned to the edge $\wt e_u$. This means that we  have constructed the word $W_u = s(\ol e_u)s(e_1) \cdots s(\wt e_u)$, i.e. we have ordered $r^{-1}(u)$.

Looking at the path $p$ that is simultaneously built in $\mathcal H$, we
 see that the number of times this path comes into and leaves  a vertex $[\ol v, \wt v]$ of the graph is precisely the crossing number of $[\ol v, \wt v]$ . In other words, $p$ is an Eulerian path of $\mathcal H$ that finally arrives to the vertex of $\mathcal H$ defined by $s(\wt e_u)$.

Case II: there is a loop in $\mathcal H = \mathcal H_n$.
To deal with this case, we
have to refine the described procedure to avoid a possible situation
when the algorithm cannot be finished properly.

We start as in Case (I), and continue until we have arrived to a vertex $[\ol v_1, \wt v]$, where, for the first time, $[\sigma(\wt v), \wt v] \in \mathcal H$.  In other words, this is the first time that our path reaches a vertex which has a successor with a loop.
If $[\sigma(\wt v), \wt v] $
 has  crossing number zero, - i.e. it is the terminal vertex - and we are not at the terminal stage of defining the order, we ignore this vertex and continue as in Case (I). If $[\sigma(\wt v), \wt v] $
 has a positive crossing number, i.e.  $P_u (  [\sigma(\wt v), \wt v]  )>0  $,  then at this point, we continue the path to
  $[\sigma(\wt v), \wt v] $, and then
traverse this loop $P_u (  [\sigma(\wt v), \wt v]  )-1$ times. If $P_u (  [\sigma(\wt v), \wt v]  ) =    \sum_{w\in  [\sigma(\wt v), \wt v]  } \wt f_{u,w}=    \sum_{w\in  [\sigma(\wt v), \wt v]  } f_{u,w}$ this means we are traversing this vertex enough times that it is no longer part of the resulting $\mathcal H$ that we have at the end of this step - we are removing the looped vertex. If  $P_u (  [\sigma(\wt v), \wt v]  ) =   \sum_{w\in  [\sigma(\wt v), \wt v]  } \wt f_{u,w}=   ( \sum_{w\in  [\sigma(\wt v), \wt v]  } f_{u,w}) -1,$  then we are reducing this vertex to a vertex whose crossing number is 0 and we will only return to this vertex at the very end of our path.
 Looking at the relation
\begin{equation}\label{loop equation}
\sum_{w\in W_{\wt v}} \wt f_{u,w} \ \  = \sum_{w'\in W'_{\sigma(\wt v)}} \ol f_{u,w'},
\end{equation}
we see that we
have removed all the values $\wt f_{u,w}$, where $w \in [\sigma(\wt v), \wt v] $ on the left hand side, and also this same amount of values from the right hand side.
  We consequently
 enumerate  all edges whose source lies in $[\sigma(\wt v), \wt v] $ in {\em arbitrary
 order}.

 We also need to ensure that once we have traversed this loop the required number of times, we can actually leave this vertex $[\sigma(\wt v), \wt v] $. To see this,
we first make a remark about the graph $\mathcal H$. Suppose that there is a loop in $\mathcal H$ at $[\ol v, \wt v]$, whose crossing number is positive.
If $[\ol v_1, \wt v]$ is a (non-looped) vertex with a positive crossing number, which has $[\ol v, \wt v]$ (the vertex with the loop) as a successor, then for some $\wt v'\neq \wt v$, the vertex $[\ol v, \wt v']$ will satisfy
 $\sum_{w'\in [\ol v, \wt v']} \ol f_{u,w'} >0$.
 This is because of our discussion above concerning (\ref{loop equation}): the crossing number at the looped vertex appears on both sides, and cancel. So if $[\ol v_1, \wt v]$ has a positive crossing number, this contributes positive values to the left hand side of (\ref{loop equation}); thus there is some vertex $[\ol v, \wt v']$ with a positive value
 $\sum_{w'\in [\ol v, \wt v']} \ol f_{u,w'} $ contributing
 to the right hand side of (\ref{loop equation}). All this means that we are able to continue our path out of the looped vertex $[\sigma(\wt v), \wt v] $.

 Then we return to the procedure from (I), until  we reach a vertex with a looped vertex as a successor, and revert to the procedure from (I) when we have removed the looped vertex.

To summarize Cases I and II, we notice that, constructing the Eulerian
 path $p$, the following rule is used: as soon as $p$ arrives before a loop
 around a vertex $t$ in $\mathcal H$,  $p$ traverses this vertex $P_u(t)-1$ times.
 Then $p$
 leaves $t$ and goes to the vertex $t'$ according to the procedure  in
 Case I.

As noticed above, the fact that all edges $e$ from $r^{-1}(u)$ are enumerated is equivalent to defining a word formed  by the sources of $e$. In our construction, we obtain the word $w(u, n, n+1)= s(\ol e_u) s(e_1)\cdots s(e_j)\cdots s(\wt e_u)$.

Applying these arguments to every vertex $u$ at every sufficiently advanced level of the  diagram, we define
 an ordering $\om$ on $B$. That $\om$ is perfect follows from Lemma
 \ref{V-H-correspondence}: we chose $\om$ to have skeleton $\mathcal F$,
 and for each $n\geq 1$, constructed all words $w(v,n,n+1)$  to correspond to
 paths in $\mathcal H_n$. The result follows.

\end{proof}

\begin{remark}\label{non-simple case}
We observe that the assumption about simplicity of the Bratteli diagram in the above theorem is redundant. It was used only when we worked with strictly positive rows of incidence matrices. But for a non-simple finite rank diagram $B$ we can use the following result proved in \cite{bezuglyi_kwiatkowski_medynets_solomyak:2011}.

\textit{ Any Bratteli diagram of
finite rank is isomorphic to a diagram  whose incidence matrices $(F_n)$  are of the form
\begin{equation}\label{Frobenius Form: General}
F_n =\left(
  \begin{array}{ccccccc}
    F_1^{(n)} & 0 & \cdots & 0 & 0 & \cdots & 0 \\
    0 & F_2^{(n)} & \cdots & 0 & 0 & \cdots & 0 \\
    \vdots & \vdots & \ddots & \vdots & \vdots & \cdots& \vdots \\
    0 & 0 & \cdots & F_s^{(n)} & 0 & \cdots & 0 \\
    X_{s+1,1}^{(n)} & X_{s+1,2}^{(n)} & \cdots & X_{s+1,s}^{(n)} & F_{s+1}^{(n)} & \cdots & 0 \\
    \vdots & \vdots & \cdots & \vdots & \vdots & \ddots & \vdots \\
    X_{m,1}^{(n)} & X_{m,2}^{(n)} & \cdots & X_{m,s}^{(n)} & X_{m,s+1}^{(n)} & \cdots & F_m^{(n)} \\
  \end{array}
\right).
\end{equation}
For every $n\geq 1$, the matrices $F_i^{(n)},\ i=1,...,s $, have strictly positive entries and matrices $F_i^{(n)},\ i=s+1,...,m$, have either strictly
positive or zero entries. For every fixed $j = s+1,...,m$, there is
at least one non-zero matrix $X_{j,k}^{(n)}$}.

It follows from (\ref{Frobenius Form: General}) that, for $u\in
V_{n+1}$, the $u$-th row of $F_n$ consists of several parts such that
the proof of Theorem \ref{existence of good order} can be applied to
each of these parts independently. Indeed, it is obvious that if $u$
belongs to any subdiagram defined by $(F_i^{(n)}),\ i=1,..., s$, then
we have a simple subdiagram. If $u$ is taken from
$(F_i^{(n)}),\ i=s+1,..., m$, then by (\ref{Frobenius Form: General})
we may have some zeros in a row but they do not affect the procedure
in the proof of Theorem \ref{existence of good order}.
  \end{remark}
\medskip

We illustrate the proof of Theorem  \ref{existence of good order} with
the following examples.

\begin{example}\label{3 max/min in rank 6}
 Suppose $B$ is a rank 6 Bratteli diagram defined on the vertices
 $\{a,b,c, d, e, f\}$. Let $\ol V = \wt V = \{a,b, c\}$ and $\sigma(a) =
 b, \sigma(b) = c, \sigma(c) = a$.  Take the  skeleton $\mathcal F=
 \{M_a, M_b, M_c, m_a, m_b, m_c; \ol e_d, \wt e_d, \ol e_e, \wt e_e, \ol
 e_f, \wt e_f\}$ where $s(\ol e_d) = b,\ s(\ol e_e) = b,\ s(\ol e_f) =
 c$ and $s(\wt e_d) =a,\ s(\wt e_e) =a,\ s(\wt e_f) =c$. For simplicity
 of notation, we suppose that $B$ is  stationary.
 For such a choice of the data, we see that non-empty intersections of
 partitions $W$ and $W'$ give the following sets: $[a, a] =
 \{a\},\ [b , a] = \{d,e\},\ [b, b] = \{b\},\ [c , c]
 = \{c,f\}$. The graph $\mathcal H$ is illustrated in
 Figure \ref{third_pic}.

 \begin{figure}[h]
\centerline{\includegraphics[scale=1.1]{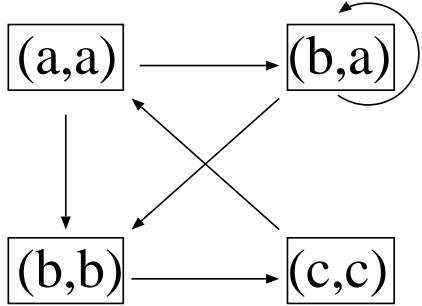}}
\caption{The graph associated to $\mathcal F_\omega$ in Example \ref{3 max/min in rank 6}
  \label{third_pic}}
\end{figure}

We see that $\mathcal H$ has four vertices and one loop around the vertex $[b,a]$. The directed edges are shown on the figure and defined by $\sigma$.

We consider, for definiteness, the case $u = a$ only and construct an
order on $r^{-1}(a)$ according to Theorem \ref{existence of good
  order}. In this case, the balance relations have the
form: $f_{a,a} -1 = f_{a,b} = f_{a,c} + f_{a,f}$ and the entries
$f_{a,d}, f_{a,e}$ can be taken arbitrarily because they correspond to
the loop in $\mathcal H$. For instance, the following row $(3, 2, 1,
3, 2, 1)$ satisfies the above condition.  Applying the algorithm
in the proof of  Theorem \ref{existence of good order}, we can order the edges from
$r^{-1}(a)$ such that their sources form the word $w(a, n-1, n) =
addeedbfabca$. To define an order on $r^{-1}(v), v= b,c,d,e,f$, we
apply similar arguments (details are left to the reader). By Theorem
\ref{existence of good order}, we conclude that if the entries of
incidence matrices satisfy (\ref{sum condition1}), then $B$ admits a
perfect ordering $\om $ such that $\mathcal F = \mathcal F_\om$ and
the Vershik map agrees with $\sigma$.
\end{example}

In the next example, we will show how one can describe the structure of
Bratteli diagrams of rank $d$ for which there exists a perfect ordering
with exactly $d-1$ maximal and minimal paths. The following example
deals with a finite rank 3 diagram.

\begin{example}\label{looped_example}
Suppose $B$ is a rank 3 diagram defined on the vertices $\{a,b,c\}$ with
 $\ol V = \wt V = \{a,b\}$ and $\sigma(a) = b, \sigma(b) = a$.  Take the
 skeleton $\mathcal F= \{M_a, M_b,  m_a, m_b;\ol e_d, \wt e_c, \ol
 e_c\}$ where $s(\ol e_c) = b,\ s(\wt e_c) = a$.
 For such a
 choice of the data, we see that $[a,a] = \{a\}, [a, b] =
 \emptyset, [b, a] = \{c\}, [b,b] = \{b\}$ and $\mathcal H$
 is illutrated in Figure \ref{figurea}.

\begin{figure}[h]\label{figurea}
\centerline{\includegraphics[scale=1.1]{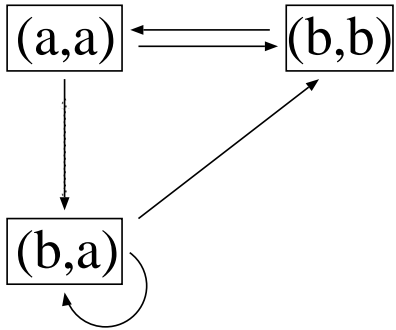}}
\caption{The graph associated to $\mathcal F$ in Example \ref{looped_example}
  \label{ordering_construction}}
\end{figure}

To satisfy the condition of Theorem \ref{existence of good order}, we have to take the incidence matrix
$$
F =\left(
     \begin{array}{ccc}
       f +1 & f & p \\
       g  & g+1 & q \\
       t & t  & s \\
     \end{array}
   \right)
$$
where the entries $f$, $g$, $p$, $q$ and $t$ are any positive integers. We note that the form of $F$ depends on the given skeleton. In order to see how Theorem \ref{existence of good order} works, one can choose some specific values for the entries of $F$ and repeat the proof of the theorem. For example, if the incidence matrix is of the form
$$
F =\left(
     \begin{array}{ccc}
       3& 2& 1 \\
       2 & 3 & 1 \\
       4 & 4 & 2\\
     \end{array}
   \right),
$$
then  one possibility for a valid ordering is  $w(a,n-1,n) = acbaba$, $w(b,n-1,n) = bacbab$ and $w(c,n-1,n) = baccbababa$. Note that there are other valid orderings that do not comply with our algorithm, for example $w(a,n-1,n) = abacba$.

Finally we show how looped vertices can cause trouble. Take the vector $(f+1,f,p)=(2,1,1)$ for the $a$-th row of $F$. Note that the only possible way to order $r^{-1}(a)$ is $r^{-1}(a) = acba$. In other words, the initial letter $a$ {\em must} be followed by the letter $c$: in our graph $\mathcal H$, we must go from the vertex $[a,a]$ to the looped vertex $[b,a]$, otherwise we cannot complete the ordering on $r^{-1}(a)$.

\end{example}

\section{The measurable space of orderings on a diagram}\label{genericity_results}

In this section we study $\mathcal O_B$ as a measure space.
Recall that $\mu = \prod_{v\in V^* \backslash V_0}\mu_v$ has been defined as the product measure on the set $\mathcal O_B= \prod_{v\in V^* \backslash V_0}P_v$, where each $\mu_v$ is the uniformly distributed measure on $P_v$. Also recall that
${\mathcal O}_{B}(j)$ is the set of orders on $B$ with $j$ maximal paths.

\begin{theorem}\label{goodbaddichotomy}
 Let $B$ be a finite rank $d$ aperiodic Bratteli diagram. Then there
 exists  $j \in \{1,..., d\}$ such that $\mu$-almost all orderings
 have $j$ maximal and $j$ minimal elements.

\end{theorem}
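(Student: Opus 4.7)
The plan is to establish the stronger claim that $j_{\max}(\om):=|X_{\max}(\om)|$ and $j_{\min}(\om):=|X_{\min}(\om)|$ are each constant $\mu$-almost surely, and that these two constants coincide. Both steps rest on Kolmogorov's zero-one law combined with a reversal symmetry of $(\mathcal{O}_B,\mu)$.

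For the first step I would parametrize maximal paths as follows. For any ordering $\om$, let $\wt e_v^\om$ denote the unique $\om$-maximal edge in $r^{-1}(v)$ and define the \emph{max-source map} $g_n^\om: V_n \to V_{n-1}$ by $g_n^\om(v):=s(\wt e_v^\om)$. Since $B$ has finite rank, any infinite $\om$-maximal path is determined by the sequence of vertices $(v_n)_{n\geq 1}$ that it visits: the level-$n$ edge is forced to be $\wt e_{v_n}^\om$. This gives
\[
j_{\max}(\om)=\bigl|\{(v_n)_{n\geq 1}:v_n\in V_n,\ g_{n+1}^\om(v_{n+1})=v_n\text{ for every }n\geq 1\}\bigr|,
\]
with the analogous formula for $j_{\min}(\om)$ in terms of min-source maps.

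The crux is to show that $j_{\max}$ is unaffected by changing $\om$ on finitely many vertices. Fix $N\geq 1$. Every compatible sequence $(v_n)_{n\geq 1}$ is uniquely determined by its tail $(v_n)_{n\geq N}$, since the missing entries $v_{N-1},\ldots,v_1$ are forced by $v_{n-1}=g_n^\om(v_n)$. Therefore the cardinality displayed above equals the number of compatible tails, which depends only on the restriction of $\om$ to $\bigcup_{n>N}V_n$. Because $N$ is arbitrary, $j_{\max}$ is measurable with respect to the tail $\sigma$-algebra of the product $\mathcal{O}_B=\prod_v P_v$. As the coordinates $(\om_v)$ are $\mu$-independent, Kolmogorov's zero-one law forces $j_{\max}$ to be $\mu$-a.s.\ equal to some constant $j\in\{1,\ldots,d\}$; the same argument yields $j_{\min}\equiv j'$ a.s.\ for some $j'\in\{1,\ldots,d\}$.

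Finally, to show $j=j'$, I would use the reversal involution $\rho:\mathcal{O}_B\to\mathcal{O}_B$ that reverses every local order $\om_v$. By construction $X_{\max}(\rho(\om))=X_{\min}(\om)$ and $X_{\min}(\rho(\om))=X_{\max}(\om)$, and since reversal of a linear order preserves the uniform measure on each $P_v$, we have $\rho_*\mu=\mu$. Hence $j_{\max}$ and $j_{\min}$ are identically distributed under $\mu$, so $j=j'$. The single delicate point in the whole argument is the tail-measurability step; the bijection with compatible tails is precisely what makes a change of $\om$ at a single vertex $v_0\in V_N$ merely replace the initial segment of each maximal path without altering the cardinality of $X_{\max}(\om)$.
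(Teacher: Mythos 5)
Your proof is correct, and while it shares the paper's central tool (Kolmogorov's zero-one law applied to the independent coordinates $(\om_v)$), it reaches both halves of the statement by a different and more self-contained route. For the a.s.\ constancy of the number of maximal paths, the paper identifies $\mathcal O_B(j)$ as $\limsup_k H_k^j$, where $H_k^j$ is built from the cylinder events $G_k^{n,j}$ recording how many distinct sources the level-$k$-to-$n$ maximal paths have; the nontrivial inclusion requires a diagonal/compactness argument. You instead exploit the bijection between maximal paths and ``compatible'' vertex sequences for the max-source maps, and observe that restriction to tails $(v_n)_{n\geq N}$ is a bijection onto compatible tails, so that $|X_{\max}(\om)|$ depends only on the coordinates beyond level $N$; this is shorter, though to be fully rigorous you should also record why $j_{\max}$ is measurable in the first place (e.g.\ $j_{\max}=\lim_N a_N$ where $a_N(\om)$ counts level-$N$ vertices admitting compatible continuations to every deeper level, a countable intersection of cylinder events), since tail-measurability is the conjunction of measurability with dependence only on far coordinates. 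For the equality of the numbers of maximal and minimal paths, the paper does not settle this inside Theorem \ref{goodbaddichotomy} at all: it defers it to Corollary \ref{j=j'}, which goes through the Borel--Cantelli-type criterion of Theorem \ref{generic_three} and the observation that the masses $\mu(G_{n_k}^{n_{k+1},j})$ depend only on the Markov matrices and are insensitive to replacing ``maximal'' by ``minimal''. Your order-reversal involution $\rho$, which swaps $X_{\max}$ and $X_{\min}$ and preserves the uniform product measure, gives the same conclusion immediately and more elementarily. What the paper's longer route buys is the quantitative machinery ($G_k^{n,j}$, $H_k^j$, and Theorem \ref{generic_three}) that it reuses later, e.g.\ in Corollary \ref{generic_one} and Theorem \ref{generic_two}; what your route buys is a short, self-contained proof of Theorem \ref{goodbaddichotomy} including the statement $j=j'$.
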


\begin{proof}
We shall first show that there exist $j$ and $j'$ such that  $\mu$-almost all orderings
 have $j$ maximal and $j'$ minimal elements. We then show that $j=j'$ in
 Corollary \ref{j=j'}.
If $B$ has rank $d$, then for $k \in \mathbb N$,  $1\leq i \leq d$ and $n>k$, define the event
\[G_{k}^{n,i} = \{ \om: \mbox{the  maximal paths from level $k$ to level $n$ have exactly $i$ distinct sources}\,\} , \]
and
\[ H_{k}^{i}: = \bigcup_{n> k} G_{k}^{n,i}\, . \]
We claim that
${\mathcal O}_{B}(1) = \limsup H_{k}^{1}$. For if $\om \in \limsup H_{k}^{1}$, then for some subsequence $(n_{k})$, $\om \in H_{n_k}^{1}= \bigcup_{n>n_{k}} G_{n_k}^{n,1} $ for each $k$. For each $n_{k}$, there is some $n>n_{k}$ such that the maximal paths from level $n_{k}$ to level $n$ have only one source. This means there is only one maximal path from level 1 to level $n_{k}$ that is extended to an infinite maximal path. Letting $n_{k}\rightarrow \infty$, we have that $\om \in \mathcal O_{B}(1)$.
Conversely, suppose that $\om \not\in \limsup H_{k}^{1}$. Then for some
$K$, and all $k>K$,
$$
\om \in (\bigcup_{n>k}G_{k}^{n,1})^{c}=
\bigcap_{n>k}\bigcup_{i=2}^{d}G_{k}^{n,i}.
$$
 Fix $k>K$. For some $j$, and
some $\{v_{1}\ldots v_{j}\}\subset V_{k}$, we have $\om \in
G_{k}^{n_{p},j}$ for infinitely many $n_p >k$, where the sources of
the maximal paths from level $k$ to level $n_p$ are $\{v_{1}\ldots
v_{j}\}$ for each of these $n_p$'s. Fix $n_1$; for some set
$\{v_{1}^{1},\ldots v_{j}^{1}\} \subset V_{n_1}$, and for some
subsequence $(n_{p^{(1)}})$ of $(n_{p})$, there are $j$ maximal paths
from level $k$ to level $n_{p^{(1)}}$ whose sources are $\{v_{1}\ldots
v_{j}\}$ and which pass through $\{v_{1}^{1},\ldots v_{j}^{1}\}
\subset V_{n_1}$, for any $n_{p^{(1)}}$. Let $\{M^{(i)}_{1}: 1\leq i
\leq d\}$ be the maximal paths from level k to level $n_1$ with
$r(M^{(i)}_{1}) = v_{i}^{1}$ for $1\leq i \leq j$.  Fix one $n_{2}$ from
$(n_{p^{(1)}})$. There exist $\{v_{1}^{2},\ldots v_{j}^{2}\} \subset
V_{n_2}$ and $(n_{p^{(2)}})$, a subsequence of $(n_{p^{(1)}})$, such
that for each $n_{p^{(2)}}$, there are $j$ maximal paths from level
$k$ to level $n_{p^{(2)}}$ with range $\{v_{1}^{2},\ldots v_{j}^{2}\}
\subset V_{n_2}$. Let $\{M^{(i)}_{2}: 1\leq i \leq d\}$ be the set of these
maximal paths. Each $M_{2}^{(i)}$ is  a refinement of $M_{1}^{(i)}$.
Continue in this fashion to get, for each $1\leq i\leq j$, a sequence
$(M_{j}^{(i)})$ of paths converging to $j$ distinct  maximal paths, so that
$\om \not\in \mathcal O_{B}(1)$.

Similarly we can show that for $1<j\leq d$,
\[ {\mathcal O}_{B}(j) = \left (\limsup_{k\rightarrow \infty}H_{k}^{j} \right )\backslash \bigcup_{i=1}^{j-1} {\mathcal O}_{B}(i)\, . \]

Now order the vertices in $V=\bigcup_{n\geq 1}
V_{n}$ as $\{v_{1},v_{2}, \ldots \}$ starting from level 2 and moving to
 levels $V_{n}$, $n=3,4, \ldots$.
for each $n\geq 1$ define the random variable $X_{n}$ on $\mathcal O_B$ where $X_{n}(w) = i$ if the source of the maximal edge with range $v_{n}$ is
the vertex $i$. The sequence $(X_{n})$ is a sequence of mutually
independent variables and if $\Sigma_{n}$ is the $\sigma$-field
generated by $\{X_{n},X_{n+1}, \ldots \}$ and $\Sigma:=\bigcap_n
\Sigma_{n}$, then for each $1\leq i \leq d$, ${\mathcal O}_{B}(j) \, \in
\Sigma$ and by Kolmogorov's zero-one law, for each $1\leq j \leq d$, $\mu( {\mathcal O}_{B}(j))$ is either 0 or 1.
Note now that one can repeat the definitions of all the above sets replacing the word `maximal' with `minimal'. The result follows.
\end{proof}

In the next result we use our notation from the proof of Theorem \ref{goodbaddichotomy}.

\begin{theorem}
\label{generic_three}
Let $B$ be an aperiodic Bratteli diagram of rank $d$.
\begin{enumerate}
\item $\mu ({\mathcal O}_{B}(1)) = 1$ if and only if there exists a sequence $(n_{k})_{k=1}^{\infty}$ such that
$\sum_{k=1}^{\infty} \mu(G_{n_{k}}^{n_{k+1}, 1}) = \infty$.
\item Let $1< j \leq d$. Then $\mu ({\mathcal O}_{B}(j)) = 1$ if and
only if there exists a sequence $(n_{k})$ where $\sum_{k}
\mu(G_{n_{k}}^{n_{{k+1}}, j}) = \infty$, and for each $1\leq i < j$,
and all sequences $(m_{k})$,  $\sum_{k}
\mu(G_{m_{k}}^{m_{{k+1}}, i}) < \infty$.
\end{enumerate}
\end{theorem}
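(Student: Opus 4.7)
The plan is to reduce both parts to two Borel--Cantelli estimates, combining them with Kolmogorov's $0$-$1$ law already established in Theorem \ref{goodbaddichotomy}. The crucial observation is that for any strictly increasing sequence $(n_k)$, the event $G_{n_k}^{n_{k+1},i}$ depends only on orderings at vertices in $V_{n_k+1}\cup\cdots\cup V_{n_{k+1}}$, so events indexed by disjoint blocks of levels are mutually $\mu$-independent. This independence is what makes both Borel--Cantelli lemmas applicable.

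The first core estimate (Lemma A) I will establish is: if $\mu(\mathcal O_B(i)) = 1$, then for each $c<1$ and all $k$ sufficiently large, there exist arbitrarily large $n>k$ with $\mu(G_k^{n,i}) > c$. To prove it, let $N_{k,n}(\om)$ denote the number of distinct $V_k$-sources of maximal paths from $V_k$ to $V_n$, so $G_k^{n,i} = \{N_{k,n} = i\}$, and let $j'_k(\om)$ be the number of distinct vertices of $V_k$ through which the infinite maximal paths of $\om$ pass. The key structural facts are: (i) $n\mapsto N_{k,n}(\om)$ is integer-valued and nonincreasing with limit $j'_k(\om)$, so $\{j'_k = i\} = \liminf_n G_k^{n,i}$; and (ii) aperiodicity of $B$ forces the $i$ distinct infinite maximal paths of any $\om\in\mathcal O_B(i)$ to eventually pass through pairwise distinct $V_k$-vertices, giving $\om\in\{j'_k = i\}$ for $k\geq K_0(\om)$. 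Taking measures yields $\mu(\{j'_k = i\})\to 1$ as $k\to\infty$, and Fatou's lemma then gives $\liminf_n\mu(G_k^{n,i})\geq \mu(\{j'_k = i\})$, completing the argument.

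The second core estimate (Lemma B) runs the opposite way: if $\sum_k\mu(G_{n_k}^{n_{k+1},i}) = \infty$ for some strictly increasing $(n_k)$, then $\mu\bigl(\bigcup_{i'\leq i}\mathcal O_B(i')\bigr) = 1$. Indeed, the independence noted above combined with the second Borel--Cantelli lemma gives $\mu(\limsup_k G_{n_k}^{n_{k+1},i}) = 1$. For any $\om$ in this limsup, $N_{n_k,n_{k+1}}(\om) = i$ holds for infinitely many $k$; combined with fact (i) and the observation that $\lim_k j'_k(\om)$ equals the number of infinite maximal paths of $\om$, this forces $\om$ to have at most $i$ such paths.

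With Lemmas A and B in hand, both parts follow. Part (1) $\Leftarrow$ is Lemma B with $i=1$; part (1) $\Rightarrow$ uses Lemma A to greedily pick $n_{k+1}>n_k$ with $\mu(G_{n_k}^{n_{k+1},1}) > 1/2$, producing a divergent sum. For part (2) $\Leftarrow$, Lemma B gives $\mu\bigl(\bigcup_{i'\leq j}\mathcal O_B(i')\bigr) = 1$; for each $i<j$, the assumption that $\sum_k\mu(G_{m_k}^{m_{k+1},i}) < \infty$ for every $(m_k)$ forbids $\mu(\mathcal O_B(i)) = 1$ (otherwise Lemma A would let us build a divergent sum), so the $0$-$1$ law forces $\mu(\mathcal O_B(j)) = 1$. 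For part (2) $\Rightarrow$, Lemma A with $i=j$ yields the divergent sequence for $j$; for each $i<j$, if some $(m_k)$ produced a divergent sum, Lemma B would force $\mu(\mathcal O_B(i')) = 1$ for some $i'\leq i<j$, contradicting $\mu(\mathcal O_B(j)) = 1$. The main technical hurdle is Lemma A, specifically the interplay between the integer-valued monotonicity of $N_{k,n}$, the aperiodicity-driven inclusion $\mathcal O_B(i)\subseteq \{j'_k = i\}$ for large $k$, and Fatou's lemma to pass from sets to their measures.
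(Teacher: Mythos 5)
Your proposal is correct and follows essentially the same route as the paper's proof: monotonicity of the source sets of maximal finite paths (your nonincreasing $N_{k,n}$ is the paper's chain of inclusions $G_k^{n,1}\subset G_k^{n+1,1}$, $G_k^{n,2}\subset G_k^{n+1,2}\cup G_k^{n+1,1}$), the identification of $\mathcal O_B(i)$ with a $\liminf$/$\limsup$ of these events, the second Borel--Cantelli lemma using independence of blocks of levels, and the Kolmogorov $0$--$1$ law from Theorem \ref{goodbaddichotomy}. Your Lemma A merely packages the forward direction uniformly in $i$ (the paper treats $i=1$ and then $i=2$ via the nested sets $H_k^j$), and the appeal to aperiodicity there is not actually needed---two distinct maximal paths meeting the same level-$k$ vertex must share their first $k$ edges---but this is harmless since the hypothesis holds.
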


\begin{proof}

(1)
Note that for each $j$ and  $n$ with $n>j$,
\begin{equation}\label{inclusion_0}
G_{j}^{n,1} \subset G_{j}^{n+1,1}
\end{equation}
 and similarly for each $j$, $n$ with $n>j+1$, $G_{j+1}^{n,1} \subset G_{j}^{n,1}$. This implies that
\begin{equation}\label{inclusion} H_{j+1}^{1} = \bigcup_{n>j+1}G_{j+1}^{n,1} \subset \bigcup_{n>j+1} G_{j}^{n,1} \subset
\bigcup_{n>j} G_{j}^{n,1} = H_{j}^{1}\, .\end{equation} If $\mu
({\mathcal O}_{B}(1)) = 1$, then since from the proof of Theorem
\ref{goodbaddichotomy}  ${\mathcal O}_{B}(1) =\limsup H_{k}^{1}$, we have
$$1=\mu ({\mathcal O}_{B}(1)) = \mu(\bigcap_{k=1}^{\infty}\bigcup_{j\geq k} H_{j}^{1})
\stackrel{(\ref{inclusion})}{=}
\mu(\bigcap_{k=1}^{\infty}H_{k}^{1}),
$$
which implies that for each $k$, $\mu(H_{k}^{1}) = 1$, and now inclusion (\ref{inclusion_0}) implies that for each $k$,
\begin{equation}\label{convergence}
1=\mu(H_{k}^{1})  = \mu ( \bigcup_{n>k}G_{k}^{n,1}) =\lim_{n\rightarrow \infty} \mu ( G_{k}^{n,1}),
\end{equation}
and this implies the existence of a sequence $(n_{k})$ such that  $\sum_{k=0}^{\infty}
\mu(G_{n_{k}}^{n_{{k+1}}, 1}) =\infty$.

Conversely, suppose there is some $(n_k)$ such that $\sum_{k} \mu(G_{n_{k}}^{n_{{k+1}}, 1}) =\infty$.
The converse of the Borel-Cantelli lemma implies that
for $\mu$-almost all
orderings, there is a subsequence $(j_{k})$  such that
all  maximal  edges in $E_{j_{k'}}$ have the same source. This
implies that for almost all  $\om$ there is at most one, and thus
exactly one maximal path in $X_{B}$.

(2) We prove Statement (2) for $j=2$, other cases follow similarly.
If $\mu({\mathcal O}_{B}(2))=1$, then $\mu({\mathcal O}_{B}(1))=0$, and  by the proof of Theorem \ref{goodbaddichotomy}, this means that
$\mu(\limsup H_{k}^{2})=1$ and $\mu(\limsup H_{k}^{1})=0$. Using
 (1), we conclude that for all sequences
$(m_{k})$,  $\sum_{k} \mu(G_{m_{k}}^{m_{{k+1}},
1}) < \infty$. Also, as in the proof of (1), we will have that for each $k$, $$\lim_{n\rightarrow \infty }\mu(G_{k}^{n,1}) = 0.$$
Note that for all $n>j$,
\begin{equation}\label{inclusion_4}G_{j}^{n,2} \subset
G_{j}^{n+1, 2} \cup G_{j}^{n+1, 1}\end{equation} and for all $n>j+1$,
$G_{j+1}^{n,2} \subset G_{j}^{n, 2} \cup G_{j}^{n, 1}$. This implies
that
\begin{equation}\label{inclusion2} H_{j+1}^{2} = \bigcup_{n>j+1}G_{j+1}^{n,2} \subset
\bigcup_{n>j+1} (G_{j}^{n,2} \cup  G_{j}^{n,1})
\subset\bigcup_{n>j} (G_{j}^{n,2} \cup G_{j}^{n,1}) = H_{j}^{2}\cup  H_{j}^{1}\, .
\end{equation}
It follows that $H_{n}^{2}\subset H_{j}^{2}\cup H_{j}^{1}$ whenever $n>j$.
As in Part (1) we have
$$1= \mu(\limsup H_{k}^{2}) \stackrel{(\ref{inclusion2})}{\leq }
\mu(\bigcap_{k=1}^{\infty}(H_{k}^{2}\cup H_{k}^{1})),
$$ so that for all $k$, $\mu(H_{k}^{2} \cap H_{k}^{1})=1$, and using Inclusion (\ref{inclusion_4}), this implies that $\lim_{n\rightarrow \infty} \mu(G_{k}^{n,2}\cup G_{k}^{n,1})=1$, so that
$\lim_{n\rightarrow \infty} \mu(G_{k}^{n,2})=1$. Now one can construct a suitable sequence $(n_{k})$ as was done in (1).

Conversely, if for  some $(n_k)$, $\sum_{k} \mu (
G_{n_k}^{n_{k+1}, 2 })$ diverges, then the converse of the Borel-Cantelli lemma implies that
almost all orders $\om$ have at most 2 maximal paths.
  Since for each sequence $(m_{k})$, $\sum_{k}
\mu(G_{m_{k}}^{m_{k+1}, 1}) < \infty$, Part (1) tells us that
$\mu(\mathcal O_{B}(1)) = 0$. The result follows.
\end{proof}

If $(F_{n})$, where $F_{n}= (f^{(n)}_{v,w}),$ is the sequence of  incidence matrices for $B$, consider the Markov matrices
  $M_{n}:= (m^{(n)}_{v,w})$ where
  $m^{(n)}_{v,w}:=\frac{f^{(n)}_{v,w}}{\sum_{w}f^{(n)}_{v,w}}$. Here
  $m^{(n)}_{v,w}$ represents the proportion of edges with range $v\in
  V_{n+1}$ that have source $w\in V_{n}$. Similarly, if $(n_k)$ is a given sequence,  consider for $j\geq 1$
\begin{equation}
\label{markov_telescoping}
F_{j}':= F_{n_{j+1}-1}\cdot F_{n_{j+1}-2} \cdot \ldots \cdot F_{n_{j}+1}
\end{equation}
and define the Markov matrices $M_{j}' = (m^{\prime (j)}_{v,w})$ as before.
Proposition \ref{generic_three} tells us that the integer $j$ such that
$\mu(\mathcal O_{B}(j))=1$
depends only on the masses of the sets $G_{n_k}^{n_{k+1},j}$, as $j$ and
$(n_k)$ vary. In turn, $\mu(G_{n_k}^{n_{k+1},j})$ depends only on the
matrices $M_{k}'$ where $F_{k}'$ is defined as in
(\ref{markov_telescoping}), and is independent of the word `maximal'
which was used to define the sets  $G_{n_k}^{n_{k+1},j}$.  We have
shown:

\begin{corollary}\label{j=j'}
Let $B$ be a finite rank $d$ aperiodic Bratteli diagram. For $\mu$-almost all orders $\om$, $|X_{\max}(\om)|=|X_{\min}(\om)|$.

\end{corollary}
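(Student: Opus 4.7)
The plan is to exploit an order-reversing involution on $\mathcal O_B$ that swaps maximal and minimal paths while preserving the measure $\mu$. Concretely, for each $v \in V^* \setminus V_0$, the set $P_v$ of linear orders on $r^{-1}(v)$ admits the involution $\om_v \mapsto \om_v^{\mathrm{rev}}$ sending each order to its reverse. Because $\mu_v$ is the uniform probability measure on the finite set $P_v$, this involution is $\mu_v$-measure preserving. Taking the product, we obtain an involution $\tau: \mathcal O_B \to \mathcal O_B$, $\tau(\om) = (\om_v^{\mathrm{rev}})_{v}$, which is a $\mu$-measure preserving homeomorphism of $\mathcal O_B$.

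The key observation is that reversing every local order exactly interchanges maximal and minimal finite paths at each level, hence it interchanges the sets of infinite maximal and minimal paths: for every $\om \in \mathcal O_B$,
\[
X_{\max}(\om) = X_{\min}(\tau(\om)) \quad \text{and} \quad X_{\min}(\om) = X_{\max}(\tau(\om)).
\]
In particular, for every integer $i$, $\tau$ carries the set $\{\om : |X_{\max}(\om)| = i\}$ bijectively onto $\{\om : |X_{\min}(\om)| = i\}$, and since $\tau$ preserves $\mu$, these two sets have equal $\mu$-measure.

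By Theorem \ref{goodbaddichotomy} (and its proof) there exist integers $j, j' \in \{1, \ldots, d\}$ such that $\mu(\{|X_{\max}| = j\}) = 1$ and $\mu(\{|X_{\min}| = j'\}) = 1$. Applying the equality of measures above with $i = j$ gives $\mu(\{|X_{\min}| = j\}) = 1$. Since $\{|X_{\min}| = j\}$ and $\{|X_{\min}| = j'\}$ both have full measure, their intersection is non-empty; but any ordering in this intersection has both $|X_{\min}| = j$ and $|X_{\min}| = j'$, forcing $j = j'$. Consequently $\mu(\{|X_{\max}| = j\} \cap \{|X_{\min}| = j\}) = 1$, so for $\mu$-almost every $\om$, $|X_{\max}(\om)| = |X_{\min}(\om)|$ (both equal $j$).

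There is essentially no obstacle here beyond verifying that the local order-reversal is well defined and measure preserving, which is immediate from the fact that each $\mu_v$ is uniform on the finite symmetric object $P_v$; the rest is a formal deduction from Theorem \ref{goodbaddichotomy}. This is the same symmetry the authors allude to in the paragraph preceding the corollary, where they note that $\mu(G_{n_k}^{n_{k+1}, j})$ is independent of the word ``maximal'' used in its definition.
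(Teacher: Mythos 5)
Your proof is correct, and it takes a genuinely different route from the paper's. The paper obtains Corollary \ref{j=j'} as a byproduct of Theorem \ref{generic_three}: the number $j$ with $\mu(\mathcal O_B(j))=1$ is characterized there by a summability criterion involving only the quantities $\mu(G_{n_k}^{n_{k+1},i})$, and these masses depend only on the Markov matrices $M_k'$, hence are unchanged if ``maximal'' is replaced by ``minimal'' in the definition of the events; applying the same criterion to the minimal version forces $j'=j$. You instead bypass Theorem \ref{generic_three} entirely: the explicit order-reversing involution $\tau$ (reverse each local order on $r^{-1}(v)$) is $\mu$-preserving because each $\mu_v$ is uniform, it satisfies $X_{\max}(\om)=X_{\min}(\tau(\om))$ exactly, and combined with the zero--one law part of the proof of Theorem \ref{goodbaddichotomy} (which produces $j$ for maximal and $j'$ for minimal paths) it gives $\mu(\{|X_{\min}|=j\})=\mu(\{|X_{\max}|=j\})=1$ and hence $j=j'$. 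Your argument is more elementary and self-contained, and it makes the underlying maximal/minimal symmetry of the uniform measure explicit at the level of the whole space $\mathcal O_B$ rather than at the level of the cylinder events $G_{n_k}^{n_{k+1},i}$; what the paper's route buys in exchange is the extra information that $j$ is determined solely by the Markov (normalized incidence) data, which is the point of the surrounding discussion. The only hypotheses you use implicitly --- measurability of the sets $\{\om:|X_{\max}(\om)|=i\}$ and finiteness of the sets of extremal paths --- are already established in the proof of Theorem \ref{goodbaddichotomy} and in the finite rank setting, so there is no gap.
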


The following corollary  gives a sufficient condition for diagrams $B$ where $\mu (\mathcal O_{B}(1)) = 1$.
Note that this case includes all simple $B$ with a bounded
number of edges at each level. We use the notation of relation
(\ref{markov_telescoping}).

\begin{corollary}
\label{generic_one}
Let $B$ be a Bratteli diagram with incidence matrices $(M_{n})$. Suppose there is some $\varepsilon >0$, sequences
 $(n_{k})$ of levels and   $(w_{k})$ of vertices (where $w_{k}\in
  V_{n_{k}}$), such that $m'\,^{(k)}_{v,w_{k}} \geq \varepsilon$ for all
$k\,\in \mathbb N$ and $v \in V_{n_{k+1}}$.
Then  $\mu (\mathcal O_{B}(1)) = 1$.
\end{corollary}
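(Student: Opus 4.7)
The strategy is to apply Theorem \ref{generic_three}(1), which reduces the claim $\mu(\mathcal O_B(1))=1$ to finding a sequence of levels $(m_j)$ such that $\sum_j \mu(G_{m_j}^{m_{j+1},1})=\infty$. The natural choice is $(m_j)=(n_k)$, the subsequence supplied by the hypothesis. It then suffices to show that $\mu(G_{n_k}^{n_{k+1},1})$ is bounded below by a positive constant $c=c(\varepsilon)$ uniformly in $k$; a uniformly positive lower bound gives a divergent series, and Theorem \ref{generic_three}(1) finishes the job.

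To produce such a bound, I would introduce the sub-event
\[
A_k := \{\omega\in\mathcal O_B : \text{for every } v\in V_{n_{k+1}},\ \text{the maximal path in } E(V_{n_k},v) \text{ has source } w_k\},
\]
which clearly satisfies $A_k\subset G_{n_k}^{n_{k+1},1}$. Under $\mu$, for each $v\in V_{n_{k+1}}$ the maximal edge in $r^{-1}(v)$ is drawn uniformly and the draws across distinct $v$ belong to disjoint fibers and are therefore independent. Invoking the telescoping-invariance remark preceding Corollary \ref{j=j'} (that $\mu(G_{n_k}^{n_{k+1},j})$ depends only on the telescoped Markov matrix $M_k'$), the probability that the maximal path in $E(V_{n_k},v)$ has source $w_k$ may be identified with $m'^{(k)}_{v,w_k}\geq\varepsilon$. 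Taking the product across $v\in V_{n_{k+1}}$ would then yield $\mu(A_k)\geq \varepsilon^{|V_{n_{k+1}}|}$. When the $|V_{n_{k+1}}|$ are bounded by some $D$ (e.g. when $B$ has finite rank, or when $B$ is simple with a bounded number of edges per level, as in the comment preceding the corollary), this gives $\mu(G_{n_k}^{n_{k+1},1})\geq\mu(A_k)\geq \varepsilon^D$ uniformly in $k$, and the argument concludes via Theorem \ref{generic_three}(1).

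The main obstacle is making the identification of the maximal-path source distribution with $m'^{(k)}$ rigorous. The pushforward of $\mu$ onto the orderings of the telescoping $B'$ is not uniform, so a direct computation gives the marginal distribution of the source of the maximal path from $v$ in terms of the product $M_{n_{k+1}-1}\cdots M_{n_k}$ of individual Markov matrices, which in general differs from $M_k'$; furthermore, the events $\{X_v^{(k)}=w_k\}$ for different $v\in V_{n_{k+1}}$ share intermediate-level maximal edges and so are not literally independent. The technical work is to either (i) set up $A_k$ so that only the telescoped Markov data $M_k'$ is relevant (exploiting the remark that permits the telescoping invariance), or (ii) establish an FKG-type positive-correlation inequality for the events $\{X_v^{(k)}=w_k\}$, reducing the joint bound to a product of marginals controlled by the hypothesis. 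Either route culminates in $\mu(A_k)\geq\varepsilon^D>0$, after which the reverse Borel–Cantelli argument encoded in Theorem \ref{generic_three}(1) (which applies since the events $G_{n_k}^{n_{k+1},1}$ depend on disjoint edge sets and are therefore independent across $k$) yields $\mu(\mathcal O_B(1))=1$.
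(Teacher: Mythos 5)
Your overall route is the same as the paper's: take the levels $(n_k)$ supplied by the hypothesis, bound $\mu(G_{n_k}^{n_{k+1},1})$ below by a constant depending only on $\varepsilon$ and the rank, and conclude via Theorem \ref{generic_three}(1), whose converse direction is exactly the independent-blocks Borel--Cantelli argument you invoke (and your observation that the events $G_{n_k}^{n_{k+1},1}$ live on disjoint sets of fibers, hence are independent in $k$, is correct). Indeed the paper's entire proof is this reduction: it asserts that the hypothesis yields $\mu(G_{n_k}^{n_{k+1},1})\geq \varepsilon^{d}$ and then applies Theorem \ref{generic_three}.

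The genuine gap is that you never establish the one estimate on which the corollary rests, namely $\mu(A_k)\geq \varepsilon^{D}$ (equivalently, a uniform positive lower bound for $\mu(G_{n_k}^{n_{k+1},1})$). You diagnose the two obstructions accurately: the law of the source $X_v^{(k)}$ of the maximal path in $E(V_{n_k},v)$ is governed by the product $M_{n_{k+1}-1}\cdots M_{n_k}$ of the level-wise Markov matrices, not by the row normalization $M_k'$ of the product incidence matrix (row normalization does not commute with matrix products, so $m'^{(k)}_{v,w_k}\geq\varepsilon$ does not by itself bound $\Pr[X_v^{(k)}=w_k]$ from below), and the events $\{X_v^{(k)}=w_k\}$ for distinct $v\in V_{n_{k+1}}$ share the maximal-edge choices at intermediate vertices, so independence fails. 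But having named these difficulties you only gesture at two possible repairs --- ``exploit the telescoping-invariance remark'' or ``an FKG-type inequality'' --- and carry out neither; the first is not a proof as stated (the remark preceding Corollary \ref{j=j'} is itself an assertion, not a mechanism you can cite to transfer the bound), and the second would require formulating and proving an actual positive-correlation statement for the coalescing maximal paths, which you do not do. So your proposal reproduces the paper's reduction but leaves unproven precisely the inequality that the paper's (very terse) proof treats as immediate; to complete it you must either prove such a correlation/coalescence bound, or replace $A_k$ by an event phrased directly in terms of maximal edges at a single level (in the spirit of the sets $P_n(w)$ in the proof of Theorem \ref{generic_two}) and then reconcile that with the hypothesis on $M_k'$.
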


\begin{proof}
The satisfied condition implies that $\mu(G_{n_k}^{n_{k+1},1})
 \geq\varepsilon^{d}$.
Now apply
Proposition \ref{generic_three}.
\end{proof}

Thus while in general there is no algorithm, which, given a simple diagram
$B$, finds the number of maximal paths that $\mu$ almost all  orderings on $B$
have; nevertheless Theorem \ref{generic_three} and
Corollary \ref{generic_one} tell us that one can in principle find this number for a large class of diagrams.

 Next we want to make measure theoretic  statements about perfect subsets  in $(\mathcal
O_{B}, \mu)$: recall that   if $B'$ is a nontrivial telescoping of $B$, the set $L({\mathcal P}_{B})$ is a set
of measure 0 in ${\mathcal P}_{B'}$; for this reason we cannot telescope, and we will use the characterization of perfect orders given by Lemma
\ref{ExistenceVershikMap_Part_2}.
Theorem \ref{generic_two} implies the
following observation for simple diagrams.  If  $B$ is a diagram for which
$\mu (\mathcal O_{B} (j))=1$ with $j>1$,
 then  there is a meagreness of perfect
orderings on $B$ and hence dynamical systems defined on $X_{B}$.
 Part (2) of Theorem \ref{generic_two}
implies an  analogous statement for aperiodic diagrams.

\begin{theorem}
\label{generic_two}
Let $B$ be a finite rank Bratteli diagram.
\begin{enumerate}
\item
Suppose $B$ is simple. If $\mu(\mathcal O_{B}(1)) = 1$, then $\mu({\mathcal P}_{B})=1$.
If $\mu(\mathcal O_{B}(j)) = 1$ for some $j>1$,
then $\mu({\mathcal P}_{B})=0$.
\item
 Suppose that $B$ is aperiodic with $q$ minimal components, and that
 its incidence matrices $(F_{n})$ have a strictly positive row
 $R_{n}$ for each $n$, and where at least one entry in $R_{n}$ tends to $\infty$ as
$n\rightarrow \infty$.  If
 $\mu(\mathcal O_{B}(q)) = 1$, then $\mu(\mathcal P_{B}) = 1$. If
 $\mu(\mathcal O_{B}(j)) = 1$ for some $j>q$, then $\mu(\mathcal
 P_{B}) = 0$.
\end{enumerate}
\end{theorem}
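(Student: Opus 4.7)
The proof handles four implications; I address the two ``measure one'' directions first. If $\mu(\mathcal O_B(1)) = 1$, then a.e.\ $\omega$ is proper and hence automatically perfect (the Vershik map is defined everywhere on $X_B$), so $\mathcal O_B(1)\subseteq \mathcal P_B$ gives $\mu(\mathcal P_B)=1$. For the analogous direction in (2), the assumption $\mu(\mathcal O_B(q))=1$ together with the block structure of Definition \ref{aperiodic_definition} forces a.s.\ each of the $q$ minimal components to contribute exactly one maximal and one minimal infinite path. The positivity of $R_n$, combined with the divergence of one of its entries, is then used to verify the language condition of Proposition \ref{ExistenceVershikMap} for the canonical ``within-component'' bijection $\sigma$, after the telescoping permitted by Lemma \ref{good_orders_and_telescoping}; essentially, the growing entry guarantees that the extra edges in the $C_n$-block get typed inside the existing components and create no new extremal paths nor any forbidden two-letter transitions.

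Now the hard directions. Fix $j>1$ in case (1), or $j>q$ in case (2), with $\mu(\mathcal O_B(j))=1$; the goal is $\mu(\mathcal P_B)=0$. The plan is to produce, a.s., the configuration of Lemma \ref{ExistenceVershikMap_Part_2}: a maximal path $M$ and two distinct minimal paths $m,m^*$ together with infinitely many levels for which both of the transitions $w_k v_k$ (matching $m$) and $w_k^* v_k^*$ (matching $m^*$) appear as subwords of some $w(u_k, n_k, N_k)$. I first condition on the skeleton $\mathcal F_\omega$, which fixes the sources of all maximal and minimal edges and hence fixes $M$, $m$, $m^*$ (in case (2) we choose $m$, $m^*$ inside a common minimal component when $j>q$ forces two extremal paths into one block, or otherwise across the $C_n$-block). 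By the product representation (\ref{orderings_set}) of $\mathcal O_B$, conditional on the skeleton the orderings on the remaining edges of each $r^{-1}(v)$ are still independent uniform permutations as $v$ ranges over $V^*\setminus V_0$.

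The main technical step is a second Borel--Cantelli estimate. Letting $v_n$, $v_n^*$, $w_n$ be the sources of the level-$n$ edges of $m$, $m^*$, $M$ respectively, define $A_n$ to be the event that, for some $u\in V_{n+1}$ (any $u$ in case (1); the vertex indexing the distinguished row $R_{n+1}$ in case (2)), the random word $w(u,n,n+1)$ contains both length-two subwords $w_n v_n$ and $w_n v_n^*$. Simplicity in (1) gives $f^{(n)}_{u,w_n},f^{(n)}_{u,v_n},f^{(n)}_{u,v_n^*}\geq 1$ for all large $n$, while the divergent-entry hypothesis in (2) supplies an analogous positivity with growing multiplicity; a direct calculation of the probability that two disjoint specified pairs of edges are each adjacent in a uniform random permutation of $|r^{-1}(u)|$ objects then yields an explicit lower bound $\mu(A_n)\geq c_n$. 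Conditional independence of the $(A_n)$ (they live on disjoint coordinates in (\ref{orderings_set})) reduces matters to showing $\sum_n \mu(A_n)=\infty$, which I intend to extract from the divergence of $\sum_k \mu(G_{n_k}^{n_{k+1},j})$ supplied by Theorem \ref{generic_three}(2) by matching $A_n$ to a rearrangement of the same combinatorial data that defines the $G_{n_k}^{n_{k+1},j}$. The second Borel--Cantelli lemma then gives $A_n$ i.o.\ a.s., producing the bad configuration of Lemma \ref{ExistenceVershikMap_Part_2} and hence imperfection of $\omega$.

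The main obstacle I anticipate is carrying the lower bound $\sum_n \mu(A_n)=\infty$ through directly in $\mathcal O_B$: the remark preceding Theorem \ref{generic_two} forbids any telescoping since $L(\mathcal O_B)$ is a $\mu$-null subset of $\mathcal O_{B'}$, so every combinatorial estimate must be performed in the ambient product space (\ref{orderings_set}). The cleanest dictionary between the events $A_n$ and the tail events of Section \ref{genericity_results} is thus the technical heart of the argument, and the only difference between cases (1) and (2) will be the incidence-matrix input used to bound $\mu(A_n)$ from below---simplicity in the former, the positive-row hypothesis with a diverging entry in the latter.
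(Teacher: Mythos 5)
The measure--zero direction is where your argument breaks down, and the break is quantitative. After conditioning on the skeleton you have pinned down, in advance, the particular vertices $w_n,v_n,v_n^*$ carried by $M,m,m^*$, and your event $A_n$ asks for adjacencies in $w(u,n,n+1)$ anchored at these prescribed sources. The only incidence information you invoke is $f^{(n)}_{u,w_n},f^{(n)}_{u,v_n},f^{(n)}_{u,v_n^*}\geq 1$ from simplicity, and with single representatives the probability that the required pairs sit adjacently in a uniform ordering of $r^{-1}(u)$ is of order $|r^{-1}(u)|^{-1}$ (or $|r^{-1}(u)|^{-2}$ for two specified pairs). But the standing hypothesis $\mu(\mathcal O_B(j))=1$ with $j>1$ forces, via Corollary \ref{generic_one}, that incoming degrees along suitable vertices are unbounded, so these bounds are typically summable and the second Borel--Cantelli lemma gives you nothing. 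The proposed rescue --- ``matching $A_n$ to a rearrangement of the combinatorial data defining $G_{n_k}^{n_{k+1},j}$'' --- is not an argument: the sets $G_{n_k}^{n_{k+1},j}$ record only how many distinct sources the maximal finite paths between two levels have; they carry no information about which letter pairs are adjacent inside the words $w(u,n,n+1)$, let alone about the specific vertices fixed by your conditioning. So the divergence $\sum_n\mu(A_n)=\infty$, which is the heart of the proof, is unsubstantiated.

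The paper's proof is organized precisely to avoid this trap: it never conditions on the skeleton. It first uses Theorem \ref{generic_three} to show that at least $j$ of the sets $P_n(w)=\{v: m^{(n)}_{v,w}\geq\delta\}$ are nonempty for all large $n$ (otherwise $\mu(\mathcal O_B(j''))=1$ for some $j''<j$), and uses Corollary \ref{generic_one} to produce a vertex $v^*$ whose incoming degree tends to infinity (otherwise $j=1$). The event $\mathcal E_n$ then demands that every extremal edge at level $n$ have the source dictated by the partition $\{P_n(w_i)\}$, \emph{and} that $w(v^*,n-1,n)$ contain both a pair $w_iw_i$ and a pair $w_iw_{i'}$ with $i'\neq i$; because the proportions $m^{(n)}_{v,w_i}$ are bounded below by $\delta$ and $|r^{-1}(v^*)|\to\infty$, one gets $\mu(\mathcal E_n)\geq\delta^*>0$ uniformly in $n$, the events live on disjoint coordinates, and Borel--Cantelli applies with no external input; on $\mathcal O_B(j)\cap\{\mathcal E_n\ \text{i.o.}\}$ the extremal paths are forced through $w_1,\dots,w_j$ and Lemma \ref{ExistenceVershikMap_Part_2} gives imperfection. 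If you wish to keep the skeleton-conditioning framework you must reproduce such a \emph{uniform} lower bound, which requires proportions of edges bounded away from zero, not merely nonzero entries --- exactly the role of the sets $P_n(w)$ and the high-degree vertex $v^*$ in the paper. Separately, your positive direction in part (2) is asserted rather than proved (the ``typing'' of the $C_n$-block edges and the verification of the language condition are not carried out); the paper handles it by noting that when almost every ordering has exactly $q$ extremal pairs, each pair must lie in a distinct minimal component, which settles perfection without any estimate.
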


\begin{proof}We remark that if $j=1$, then clearly $\mu$-almost all
  orderings are perfect.

Suppose that $B$ is simple, where there are at most $d$ vertices at each level, and  $\mu(\mathcal O_{B}(j)) = 1$ for some $j>1$.
Fix $0<\delta<1/d$. Define, for $w\, \in V_{n-1}$,
\[P_{n}(w): = \{ v\in V_{n}:m_{v,w}^{(n)}\geq \delta\}\, ;\]
then $V_{n}= \bigcup_{w:P_{n}(w) \neq \emptyset}P_{n}(w)$, and, if for
infinitely many $n$, less than $j$ of the $P_{n}(w)$'s are non-empty,
then, for some $j'<j$, and some $(n_k)$, there is some $\epsilon$ such that $\mu(G_{n_k}^{n_{k+1},j'})\geq
 \epsilon$, and Theorem \ref{generic_three}
implies $\mu({\mathcal O}_{B}(j''))=1$ for $j'' \leq j'<j$, a contradiction.
There is no harm in assuming that for fixed $n$, the sets $\{P_{n}(w):
P_{n}(w)\neq \emptyset\}$ are disjoint - if not we put $v\in P_{n}(w)$, for some $w$
where $m_{v,w}^{(n)} $ is maximal - and that there is some set
$\{w_{1},\ldots w_{j}\}$ of vertices such that $P_{n}(w_i) \neq
\emptyset$ for each natural $n$ and each $i=1,\ldots , j$. If all but finitely many vertices of
the diagram are the range of a bounded number of edges, then Lemma
\ref{generic_one} implies that $\mu(\mathcal O_{B}(1))=1$, a
contradiction. So we can pick
$v_{n}^{*}\in V_{n}$ which has a maximal number of incoming edges. For
ease of notation $v_{n}^{*}=v^{*}$. By the comment just made, we can assume that as $n$ increases, $v^{*}$ is the
range of increasingly many edges.

Let ${\mathcal
E}_{n}$ be the event that
\begin{enumerate}
\item
For each $v\in V_{n}$, the maximal and minimal edge with range $v$ has source
$w_{i}$ whenever $v\in P_{n}(w_{i});$
\item
 for each $n\geq 2$, there is a pair of consecutive edges
with range $v^{*}\in V_{n}$, both having source $w_{i}$ when $v^{*}\in P_{n}(w_{i})$;
\item
For each $n\geq 2$, there is a pair of consecutive edges with range
$v^{*}\in V_{n}$, the first having source $w_{i}$ when $v^{*}\in P_{n}(w_{i})$;
the second  having source $w_{i'}$ for for some $i'\neq i$.
\end{enumerate}

Then there is some $\delta^{*}$ such that $\mu({\mathcal E}_{n})\geq
\delta^{*}$ for all large $n$. So for a set ${\mathcal
O}_{B}(j)'\subset {\mathcal O}_{B}(j)$ of full measure, infinitely
many of the events $\mathcal E_{n}$ occur. For $\om\in {\mathcal O}_{B}(j)'$, if $\om \in \mathcal E_{n}$ for such $n$, then the extremal paths go through the vertices $w_{1},\ldots w_{j}$ at level $n$. Now an application of
Lemma \ref{ExistenceVershikMap_Part_2} implies that  ${\mathcal
O}_{B}(j)'\subset \mathcal O_{B}\backslash \mathcal P_{B}$.

To prove Part 2, first note that if $B$ has $q$ minimal components, then any ordering has at least $q$ extremal pairs of paths. We assume that extremal paths come in pairs - otherwise the ordering is not perfect. If $\mu$-almost all orderings have $q$ maximal paths then necessarily each pair of extremal paths lives in a distinct minimal component of $B$, and $\mu$ almost all orderings  belong to $\mathcal P_{B}$. Suppose that $\mu({\mathcal O}_{B}(j))=1$ where $j>q$.
Write
\[{\mathcal O}_{B}(j) = \bigcup_{\{(k_{1}, \ldots  k_{q}): \sum_{i=1}^{q}k_{i}\leq j\}}
{\mathcal O}_{B}(j, \{(k_{1}, \ldots k_{q})\})\] where ${\mathcal
O}_{B}(j, \{(k_{1}, \ldots ,k_{q})\})$ is the set of orderings with
$k_{i}$ extremal pairs in the $i$-th minimal component. If for some
$i$, $k_{i}>1$, then by the argument in Part (1), $\mu({\mathcal
O}_{B}(j, \{(k_{1}, \ldots ,k_{q})\}))=0$. If $(k_{1},\ldots , k_{q})=
(1,\ldots ,1)$ this means that there is at least one extremal pair of
paths which lives outside the minimal components of $B$. Repeat the
argument in Part 1, except that $v^{*}$ must be chosen outside the
union of the minimal components of $B$, and also such that at least one of the entries in
$\{m_{v^{*},v}^{(n)}: v\in V_{n}\}$
gets large as $n \rightarrow \infty$.

\end{proof}

\begin{example} It is not difficult to find a  simple Bratteli diagram $B$ where almost all orderings are not perfect. Let $V_n=V= \{v_{1},v_{2}\}$ for $n\geq 1$, and  let $\sum_{n=1}^{\infty} m_{v_{i},v_{j}}^{(n)}<\infty$ for $i\neq j$.  Then for $\mu$-almost all orderings, there is some $K$ such that for $k>K$, the sources of the two maximal/minimal edges at level $n$ are distinct - i.e.
$\mu(\mathcal O_{B}(2))=1$. Note that here $\mu(\mathcal O_{B}(2))=1$ if and only if there are two probability measures on $X_{B}$ which are invariant with respect to the tail equivalence relation. This is not in general true as the next example shows.

\end{example}

\begin{example}
This example appears in Section 4 of \cite{ferenczi_fisher_talet:2009}.
Let
\[F_{k}:=\left(
\begin{array}{ccc}
m_{k} & n_{k} & 1 \\
 0 & n_{k}-1 & 1 \\
 m_{k}-1 & n_{k}  & 1\\
\end{array}
\right).
\]
where the sequences $(m_{k})$ and $(n_{k})$ satisfy $3n_{k}+1 \leq 2m_{k} \leq n_{k+1}$,
which implies that they get large. The corresponding stochastic matrix satisfies
\[M_{k}\approx\left(
\begin{array}{ccc}
\frac{m_k}{m_k + n_k} & \frac{n_k}{m_k +n_k} & 0\\
 0 & 1 & 0 \\
\frac{m_k}{m_k + n_k} &  \frac{n_k}{m_k + n_k} & 0\\
\end{array}
\right),
\]
and if we further require that $n_{k+1}\leq Cn_{k}$ for some $C\geq 4$, then
$\frac{n_k}{m_k +n_k}\geq \frac{2}{2+C}$, so that by Corollary \ref{generic_one},
$\mu (\mathcal O_{B}(1)) = 1$, while in \cite{ferenczi_fisher_talet:2009}, it is shown that (a telescoping of) $B$ has 2 probability measures which are invariant under the tail equivalence relation.

\end{example}

\begin{example}\label{firstexample}
Let \[F_{n}:=\left(
\begin{array}{ccccccc}
1 & 1 & 0 & 0 & 0 & 0 & 0 \\
1 & 1 & 0 & 0 & 0 & 0 & 0\\
0 & 0  & 1 & 1 & 0 & 0 & 0   \\
0 & 0  & 1 & 1 & 0 & 0 & 0\\
0 & 0  & 0 & 0 & 1 & 1 & 0  \\
0 & 0  & 0 & 0 & 1 & 1 & 0 \\
1 & 1  & 1 & 1 & 1 & 1 & 1
 \end{array}
\right)
\] for $n$ non-prime and
 \[F_{n}:=\left(
\begin{array}{ccccccc}
1 & 1 & 1 & 1 & 0 & 0 & 0 \\
1 & 1 & 1 & 1 & 0 & 0 & 0\\
1 & 1  & 1 & 1 & 0 & 0 & 0   \\
1 & 1  & 1 & 1 & 0 & 0 & 0\\
0 & 0  & 0 & 0 & 1 & 1 & 0  \\
0 & 0  & 0 & 0 & 1 & 1 & 0 \\
1 & 1  & 1 & 1 & 1 & 1 & 1
 \end{array}
\right)
\] if $n$ is prime.
Then if $n$ is prime, given any vertex $w$, $m_{v,w}^{(n)}\geq 1/7$ either
 for $v=v_{1}$ or $v=v_{5}$. So $\mu(G_{n}^{n+1, 2})\geq
 (1/7)^{7}$. Also $\mu(G_{n}^{n+1,1})=0$ for each $n\geq 1$. Theorem
 \ref{generic_three} implies that $j=2$.
\end{example}

\proof[Acknowledgements]
Most of this work was done during our mutual visits
 to  McGill University, Torun  University, The University of Ottawa and The Institute for Low
 Temperature Physics. We are thankful to these institutions for their
 hospitality and support.  We would like also
 to thank M. Cortez, F. Durand, N. Madras and K. Medynets for useful and stimulating discussions.

\end{document}